\newtheorem{lemma}{Lemma}[section]
\newtheorem{theorem}[lemma]{Theorem}
\newtheorem{corollary}[lemma]{Corollary}
\newtheorem{proposition}[lemma]{Proposition}
\newtheorem{definition}[lemma]{Definition}
\let\lutzremark=\remark
\def\remark{\lutzremark\normalfont}
\def\be{\begin{equation}}
\def\ee{\end{equation}}
\def\bea{\begin{eqnarray}}
\def\eea{\end{eqnarray}}
\def\bes{\begin{eqnarray*}}
\def\ees{\end{eqnarray*}}
\def\nn{\nonumber}
\def\<{\langle}
\def\>{\rangle}
\def\lb{\label}
\def\bs{\setminus}
\def\pt{\partial}
\def\d{{\mathrm{d}}}
\def\R{{\bf R}}
\def\C{{\bf C}}
\def\Z{{\bf Z}}
\def\N{{\bf N}}
\def\U{{\bf U}}
\def\aa{{\alpha}}
\def\bb{{\beta}}
\def\ga{{\gamma}}
\def\th{{\theta}}
\def\om{{\omega}}
\def\ep{{\epsilon}}
\def\lm{{\lambda}}
\def\sg{{\sigma}}
\def\cA{{\cal A}}
\def\P{{\cal P}}
\def\diag{{\rm diag}}
\def\Sp{{\rm Sp}}
\def\dm{{\rm \diamond}}
\def\ol#1{\overline{#1}}
\def\td#1{\tilde{#1}}
\title{Linear stability of the elliptic relative equilibria for the restricted $4$-body problem: the Euler case}
\author{Bowen Liu$^{1}$\thanks{Partially supported by NSFC (Nos. {12101394,  12171426}), Science and Technology Innovation Action Program of  STCSM (No. 20JC1413200), Natural Science Foundation of Shanghai No. 22ZR1433100	and Innovation Program of Shanghai Municipal Education Commission. E-mail: liubowen2010@gmail.com} \quad and \quad
	Qinglong Zhou$^{2}$\thanks{Corresponding author. Partially supported by NSFC (No.12171426), the Natural Science Foundation of Zhejiang Province (No. Y19A010072) and the Fundamental Research Funds for the Central Universities (No. 2021FZZX001-01).
		E-mail: zhouqinglong@zju.edu.cn. }\\
	$^{1}$ School of Mathematical Science,\\ Shanghai Jiao Tong University, Shanghai 200240, China\\
	$^{2}$ Department of Mathematics,\\Zhejiang University, Hangzhou 310027, Zhejiang, China\\
}
\begin{document}

\maketitle

\begin{abstract}
In this paper, we consider the elliptic relative equilibria of the restricted $4$-body problems, where
the three primaries form an Euler collinear configuration and the four bodies span $\R^2$. We obtain the symplectic reduction to the general restricted $N$-body problem. By analyzing the relationship between this restricted $4$-body problems and the elliptic Lagrangian solutions, we obtain the linear stability of the restricted $4$-body problem by the $\om$-Maslov index. Via numerical computations, we also obtain conditions of the stability on the mass parameters for the symmetric cases.
\end{abstract}

{\bf Keywords:} restricted $N$-body problem, elliptic Euler collinear solution, reduction, linear stability.

{\bf AMS Subject Classification}: 70F10, 70H14, 34C25.

\renewcommand{\theequation}{\thesection.\arabic{equation}}

\setcounter{equation}{0}
\section{Introduction and main results}
\label{sec:1}

{In the classical planar $N$-body problems of celestial mechanics, the position vectors of the $N$-particles are denoted by $q_1 ,\dots , {q_N}\in \R^2$, and the masses are represented by
$m_1 ,\dots,{m_N} > 0$.} By Newton’s second law and the law of universal
gravitation, the system of equations is
\begin{align}
m_i\ddot{q_i} = \frac{\pt U}{\pt q_i}, \quad i = 1, {\dots, N},\lb{1.1}
\end{align}
where $U(q) = U(q_1, {\dots, q_{N}}) = \sum_{1\leq i< j\leq {N}} \frac{m_i m_j}{|q_i-q_j|}$ 
is the potential function and $|\cdot |$ {is} the standard norm of vector in $\R^2$. 
Suppose the configuration space is 
$$\hat{\chi} :=\left\{q =(q_1, {\dots, q_N})\in (\R^2)^{{N}}\l|\sum_{i = 1}^{{N}} m_iq_i = 0, q_i \neq q_j, \forall i\neq j \right\}. $$
For the period $T$, the corresponding action functional is 
\begin{align}
  \mathbf{A}(q) = \int_{0}^{T} \left[ \sum_{i = 1}^{{N}} \frac{m_i|\dot{q}_i(t)|^2}{2} +U(q(t))\right] \d t, \lb{1.222}
  \end{align}
which is defined on the loop space $W^{1,2} (\R/T \Z, \hat{\chi})$. The periodic solutions
of \eqref{1.1} correspond to critical points of the action functional \eqref{1.222}. 
Let $p_1 , {\dots, p_{N}} \in \R^2$ be the momentum vectors of the particles respectively. It is {well-known} that \eqref{1.1} can be reformulated as a Hamiltonian system by 
\begin{align}
\dot{p}_i = -\frac{\pt H}{\pt q_i}, \; \dot{q}_i  = \frac{\pt H}{\pt p_i}, \quad \mbox{for} \;i = 1, {\dots, N}, \lb{1.2}
\end{align}
with the Hamiltonian function
\begin{align} 
H(p, q) = \sum_{i =1}^{{N}} \frac{|p_i|^2}{2m_i} - U(q_1,{\dots, q_{N}}).\lb{1.3}
\end{align}

One special class of periodic solutions to the planar $N$-body problem is the elliptic relative equilibrium (ERE for short) \cite{MS}. It is generated by a central configuration and the Keplerian motion.
A central configuration (C.C. for short) is formed by $N$ position vectors $\left(q_{1}, \ldots, q_{N}\right)=\left(a_{1}, \ldots, a_{N}\right)$ which satisfy
\begin{align}
	-\lambda m_{i} q_{i}=\frac{\partial U}{\partial q_{i}}, \forall\; 1\leq i \leq N, \lb{eqn:cc}
\end{align}
where $\lambda =U(a) /  {I(a)}>0$ and $I(a)={\sum_{i=1}^N} m_{i}|a_{i}|^{2}$ is the moment of inertia.
A planar central configuration of the $N$-body problem gives rise to a solution of \eqref{1.1} where each particle moves on a specific Keplerian orbit while the totality of the particles move according to a homothety motion.

The restricted 4-body problem is one special case of the general $4$-body problem which are three primaries and one massless body. We assume that $m_1 + m_2 + m_3 = 1$ and $m_4 = 0$. By $m_4 = 0$, \eqref{eqn:cc} can be reduced to
\begin{align}\lb{eqn:N-1CC}
    -\lambda q_{i}=& \sum_{1\leq j \leq 3,
    i\neq j}\frac{m_i(q_j - q_i)}{|q_j - q_i|^3}, \quad \mbox{for} \quad  1 \leq i \leq 3\\
    -\lambda q_{4}=& \sum_{1\leq j \leq 3
   }\frac{m_i(q_j - q_4)}{|q_j - q_4|^3}.
\end{align} 
The C.C. is decomposed to a 3-body problem and the actions on the massless body from three primaries.
For the 3-body problem, there exists two types of the C.C.s: the Lagrangian equilateral configurations \cite{Lag} and the Euler collinear configurations \cite{Euler}. If the three primaries form a Lagrangian equilateral, the number of the C.C. of the corresponding restricted 4-body problem has been well-studied \cite{Leandro2003}. For the Euler collinear case, we still have two cases: the $4$ bodies are collinear and the four bodies span $\R^2$. The case of  the four bodies form a collinear configuration has been discussed in \cite{LiZ}.  
In this paper, we focus on the case of $4$ bodies span $\R^2$.

The linear stability of the ERE is determined by the eigenvalues of the linearized Poincar\'e map. 
Let $\U$ denote the unit circle in the complex plane. The ERE is spectrally stable if all eigenvalues of linearized Poincar\'e map  are on $\U$; it is linearly stable if Poincar\'e map is semi-simple and spectrally stable; it is linearly unstable if at least one pair of eigenvalues are not on $\U$.
Since the nineteenth century \cite{R2}, the researches on the stability have always been active in celestial mechanics because it reveals the dynamics near the period orbits. 
However, it has always been one difficult task to obtain the linear stability of ERE, because the linearized Hamiltonian systems are non-autonomous, especially when for the elliptic orbits.
Many results on linear stability of the three-body problems have been obtained over the past decades by numerical methods \cite{MSS,MSS1,MSS2}, bifurcation theory \cite{R1} and the index theory \cite{HS,HLS,HuOuWang2015ARMA,Zhou2017}.  
To the best of our knowledge, the $\om$-Maslov index theory is the only analytical method to obtain the full picture of the stability and instability to the ERE, such as the elliptic Lagrangian solution \cite{HS,HLS,HuOuWang2015ARMA}, and the elliptic Euler solution \cite{Zhou2017,ZhonLong2017CMDA}.

When it comes to four-body problems,
the research on stability of ERE is quite active in the past decades. 
One well-studied case is the elliptic rhombus solution \cite{Mansur2017, Leandro2018, Liu2021} which are linearly instable. 
For the restricted $4$-body problem with three primaries forming Lagrangian equilateral configuration, the full bifurcation diagram of the stability and instability has been obtained for all possible masses and all eccentricity \cite{LiZ, Leandro2003}. Furthermore, one stable region of the linear stability has been found.
Regarding the linear stability of other ERE to the four-body problem, readers may refer to \cite{zhou2019linear, Hu2020}

In this paper, we focus on the restricted 4-body problem with three primaries forming Euller collinear configurations and the four bodies span $\R^2$. 
Since the ERE of the three primaries is always linearly unstable
\cite{Zhou2017},
it is reasonable to study the stability problem of the
massless particle.
For example, the
“massless body” can be imaged as a space station and the three massive bodies form a planetary system.

We first apply the symplectic reduction method \cite{MS} to the general restricted $N$-body problem. 
Let $P$ denote the inertial position of the massless body zero, which moves in the gravitational field of the $N-1$ primaries, without disturbing their motion. The corresponding 
 Hamiltonian function for this $N$th body is given by
\begin{equation}
H(P,q, t)={1\over2}|P|^2-\sum_{i=1}^{N-1}{m_i\over|r(t)R(\th(t))a_i-q|}.\lb{HaFun}
\end{equation}
Here $z=z(t)$ is the Kepler elliptic orbit given through the true anomaly $\th=\th(t)$by
\begin{align}
  r(\th(t)) = |z(t)| = \frac{p}{1+e\cos\th(t)},  \lb{rTh}
\end{align}
where $p=a(1-e^2)$ and $a>0$ is the latus rectum of the ellipse \eqref{rTh}.
By Proposition \ref{P2.1}, the ERE $(P(t),q(t))^T$  of the system \eqref{1.2} is in time $t$ where $q(t)=r(t)R(\theta(t))a_{N}$ and $P(t)=\dot{q}(t)$. It 
can be transformed to the new solution $(\bar{Z}(\theta),\bar{z}(\theta))^T = (0,\sigma,\sigma,0)^T$ in the true anomaly $\theta$ as the new
variable for the original Hamiltonian function $H$ given by \eqref{eqn:red.Ham}. We then have the reduced Hamiltonian system is given in the theorem.

\begin{theorem}\label{linearized.Hamiltonian}
The linearized Hamiltonian system of \eqref{eqn:red.Ham}  at the ERE
  $\zeta_0 \equiv (\bar{Z}(\theta),\bar{z}(\theta))^T =
  (0,\sigma,\sigma,0)^T\in\R^4  $
  depending on the true anomaly $\theta$ is given by
  \begin{align}
    \dot\xi(\theta) = JB(\theta)\xi(\theta),  \lb{eqn:LinearHam1}
  \end{align}
  with
  \bea B(\theta)
  = H''(\theta,\bar{Z},\bar{z})|_{\bar\xi=\xi_0}
  = \left(\begin{array}{cccc|cccc}
  I_2   &-J_2 \\
  J_2     &I_2-{1\over1+e\cos\th}D
  \end{array}\right),  \lb{LinearHam2}\eea
  where
  \begin{align}
    \label{matrix.D}
  D=I_2-{1\over\mu}\left(\sum_{i=1}^{N-1}{m_i\over|a_i-a_{N}|^3}\right)I
  +{3\over\mu}\sum_{i=1}^{N-1} m_i\frac{(a_i-a_{N})(a_i-a_{N})^T}{|a_i-a_{N}|^5}.
  \end{align}
  The corresponding quadratic Hamiltonian function is given by
  \bea
  H_2(\theta,\bar{Z},\bar{z})  
   =\frac{1}{2}|\bar{Z}|^2+\bar{Z}\cdot J\bar{z}
    +\frac{1}{2}\left(I_2-{D\over1+e\cos\th}\right)|\bar{z}|^2. \eea
  \end{theorem}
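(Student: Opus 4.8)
The plan is to linearize the reduced Hamiltonian \eqref{eqn:red.Ham} directly at the relative equilibrium $\zeta_0$ and to identify its Hessian block by block. Schematically the reduced Hamiltonian is the sum of a kinetic term $\tfrac12|\bar Z|^2$, a gyroscopic (Coriolis) term $\bar Z\cdot J\bar z$, and an effective-potential term in which the centrifugal part $\tfrac12|\bar z|^2$ and the Newtonian part $\tfrac1\mu\sum_{i=1}^{N-1}m_i/|a_i-\bar z|$ are both rescaled by the factor $1/(1+e\cos\theta)$ inherited from \eqref{rTh}. Since $B(\theta)=H''(\theta,\bar Z,\bar z)|_{\zeta_0}$ and the linearization of a Hamiltonian flow $\dot x=J\nabla H$ about an equilibrium is $\dot\xi=J\,H''(\zeta_0)\,\xi$, the entire statement reduces to computing this one Hessian and checking that $\zeta_0$ is genuinely an equilibrium.

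First I would verify that $\zeta_0$ is an equilibrium of \eqref{eqn:red.Ham} for every value of $\theta$, not just for a single one. Writing $\nabla_{\bar Z}H=\bar Z+J\bar z$ and computing $\nabla_{\bar z}H$, and then separating the $\theta$-independent part from the coefficient of $1/(1+e\cos\theta)$, the vanishing of the latter is exactly the central configuration relation $-\mu\,a_N=\sum_{i=1}^{N-1}m_i(a_i-a_N)/|a_i-a_N|^3$, i.e.\ the massless-body equation of \eqref{eqn:N-1CC} with $\mu=\lambda$, while the former gives $\bar Z+J\bar z=0$. Both hold at $\zeta_0=(\bar Z,\bar z)=(0,\sigma,\sigma,0)^T$ (so that $\bar z=a_N$), so $\zeta_0$ is a true $\theta$-independent relative equilibrium and the dependence on the true anomaly enters the linearization only through the scalar $1/(1+e\cos\theta)$.

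The heart of the proof is the second-order computation. The $\bar Z\bar Z$-block is $I_2$ from $\tfrac12|\bar Z|^2$, and differentiating the gyroscopic term $\bar Z\cdot J\bar z$ produces the two off-diagonal blocks $-J_2$ and $J_2$. The $\bar z\bar z$-block is where all the work is: I would differentiate each Newtonian summand twice and evaluate at $\bar z=a_N$,
\begin{align}
\nabla_{\bar z}^2\frac{1}{|a_i-\bar z|}\Big|_{\bar z=a_N}=-\frac{I}{|a_i-a_N|^3}+\frac{3(a_i-a_N)(a_i-a_N)^T}{|a_i-a_N|^5},
\end{align}
sum against $m_i/\mu$, add the centrifugal Hessian, and carry the factor $1/(1+e\cos\theta)$. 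After collecting terms and factoring out $1/(1+e\cos\theta)$, the $\bar z\bar z$-block becomes $I_2-D/(1+e\cos\theta)$ with $D$ exactly as in \eqref{matrix.D}; the constant $I_2$ inside $D$ is the contribution of the centrifugal part of the scaled effective potential, and the constant $\mu$ in the denominators is the central-configuration eigenvalue $\lambda$ carried through the reduction. Assembling the four blocks yields $B(\theta)$, and $H_2$ is read off as the quadratic form attached to $B(\theta)$.

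The main obstacle I anticipate is bookkeeping rather than any deep idea: keeping the sign conventions for $J$ and $J_2$ consistent between the gyroscopic term and the symplectic structure, and ensuring that the outer centrifugal $I_2$, the inner centrifugal $I_2$, the Newtonian Hessian, and the normalizing constant $\mu$ recombine into the single clean matrix $D$ without leaving a stray $1/(1+e\cos\theta)$ multiplying the identity. The one genuinely substantive point is the $\theta$-independence of the equilibrium in the second step: it is there that the central configuration hypothesis \eqref{eqn:N-1CC} is used, and I would establish it carefully before embarking on the Hessian calculation.
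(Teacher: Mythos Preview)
Your approach is correct and mirrors the paper's: both compute the Hessian of \eqref{eqn:red.Ham} block by block at $\zeta_0$, with the only real work being the $\bar z\bar z$-block, and your additional verification that $\zeta_0$ is an equilibrium via \eqref{eqn:N-1CC} is a worthwhile complement the paper omits. One bookkeeping slip to watch: in \eqref{eqn:red.Ham} the Newtonian summands are $m_i/|\sigma a_i-\bar z|$ with prefactor $r/\sigma$, and the equilibrium value is $\bar z=\sigma a_N$ (not $a_N$ as you write); the $\sigma$'s cancel against $\sigma^4=\mu p$ upon second differentiation, which is how the $1/\mu$ in $D$ and the clean factor $1/(1+e\cos\theta)$ actually emerge.
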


We apply Theorem \ref{thm:RE.norm.form} to the restricted $4$-body problem. Denote the eigenvalues of $D$ by $\lm_3$ and $\lm_4$. By Proposition  \ref{prop.sum.lm3.lm4}, we have that both $\lm_3$ and $\lm_4$ are positive and  $\lm_3 + \lm_4 = 3$.
By this property, we show the relationship between the Maslov index of $\xi_{\aa, e}$ given by \eqref{eqn:LinearHam1} and the Maslov index of $\ga_{\beta, e}$ which is the elliptic Lagrangian solutions in \cite{HLS}. 
Since the elliptic Lagrangian solutions has already been well-studied in \cite{HLS},
we have the Maslov index of $\xi_{\aa, e}$ in Proposition \ref{prop:index}.
By the properties of the Maslov index, we define three curves $\aa_k(e)$, $\aa_s(e)$ and $\aa_m(e)$ (cf. \eqref{eqn:aa.sm}, \eqref{eqn:def.aak} below) from left to right in $(\aa, e)\in [0,3]\times [0, 1)$ according to the $\omega$-Maslov indices of $\xi_{\aa,e}$. 
By the $\om$-Maslov index theory \cite{Lon4}, we obtain the normal forms of $\xi_{a,e}(2\pi)$ and then the linear stability as follows.
\begin{theorem}\lb{thm:RE.norm.form}
  \begin{enumerate}[label = (\roman*)]
         \item 
         We have $\xi_{\alpha, e}(2 \pi) \approx R\left(\theta_{1}\right) \diamond R\left(\theta_{2}\right)$ for some $\theta_{1}$ and $\theta_{2} \in(\pi, 2 \pi)$, and thus it is strongly linearly stable on the segment $\alpha_{m}(e)<\alpha<3$; 
         \item We have $\left.\xi_{\alpha, e}(2 \pi) \approx D(\lambda) \diamond R(\theta)\right)$ for some $0>\lambda \neq-1$ and $\theta \in(\pi, 2 \pi)$ and it is elliptic-hyperbolic, and thus linearly unstable on the segment $\alpha_{s}(e)<\alpha<\alpha_{m}(e)$.
         \item  We have $\xi_{\alpha, e}(2 \pi) \approx R\left(\theta_{1}\right) \diamond R\left(\theta_{2}\right)$ for some $\theta_{1} \in(0, \pi)$ and $\theta_{2} \in(\pi, 2 \pi)$ with $2 \pi-\theta_{2}<\theta_{1}$, and thus it is strongly linearly stable on the segment $\alpha_{k}(e)<\alpha<\aa_s(e)$.
  \end{enumerate}
\end{theorem}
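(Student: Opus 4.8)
The plan is to reduce the normal-form problem for the $4\times4$ monodromy matrix $\xi_{\aa,e}(2\pi)$ to two-dimensional building blocks coming from the elliptic Lagrangian solution, and then to read off the conjugacy class of each block from the $\om$-index data of Proposition \ref{prop:index}. Since $D$ in \eqref{matrix.D} is a real symmetric $2\times2$ matrix, a rotation $P\in\SO(2)$ diagonalizes it, $P^TDP=\diag(\lm_3,\lm_4)$; as $\diag(P,P)$ is symplectic and commutes with $J_2$, conjugating \eqref{eqn:LinearHam1} by it fixes $B(\th)$ away from the $D$-block and replaces $D$ by its eigenvalues. Regrouping the resulting quadratic Hamiltonian into the canonical pairs $(\bar Z_1,\bar z_2)$ and $(\bar Z_2,\bar z_1)$ then decouples it into two one-degree-of-freedom systems carrying the coefficients $\lm_4$ and $\lm_3$. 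Each of these is, up to a sign of the cross term, one of the two factors into which the essential elliptic Lagrangian system of \cite{HLS} itself decouples; because Proposition \ref{prop.sum.lm3.lm4} gives $\lm_3+\lm_4=3$, the pair $\{\lm_3,\lm_4\}$ matches $\{\tfrac{3+\sqrt{9-\bb}}{2},\tfrac{3-\sqrt{9-\bb}}{2}\}$ and yields $\xi_{\aa,e}(\th)\approx\ga_{\bb,e}(\th)$ in the notation of \cite{HLS}. This identification is the conceptual heart of the argument and is exactly what makes Proposition \ref{prop:index} applicable.

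With the index function $\th\mapsto i_{e^{i\th}}(\xi_{\aa,e})$ in hand, I would recall that the three curves $\aa_k(e)$, $\aa_s(e)$, $\aa_m(e)$ are the loci where $\xi_{\aa,e}(2\pi)$ acquires an eigenvalue $1$ (for $\aa_k$) or $-1$ (for $\aa_s$ and $\aa_m$), equivalently where the nullities $\nu_1$ or $\nu_{-1}$ jump; these partition the strip $[0,3]\times[0,1)$ into the three open segments of the statement. On each segment the splitting numbers $S^{\pm}_{\xi(2\pi)}(\om)$ and the nullities are locally constant, so Long's classification of $\Sp(4)$ together with the Bott iteration formula \cite{Lon4} determines, block by block, whether each $2\times2$ factor is elliptic (conjugate to $R(\th_j)$) or hyperbolic. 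In the rightmost segment $\aa_m(e)<\aa<3$ both blocks are elliptic, giving $R(\th_1)\diamond R(\th_2)$; crossing $\aa_m(e)$ one block leaves $\U$ through $-1$, so on $\aa_s(e)<\aa<\aa_m(e)$ it becomes negative hyperbolic and $\xi_{\aa,e}(2\pi)\approx D(\lm)\diamond R(\th)$ with $0>\lm\neq-1$; crossing $\aa_s(e)$ it returns to $\U$, giving once more a form $R(\th_1)\diamond R(\th_2)$ on $\aa_k(e)<\aa<\aa_s(e)$.

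It remains to locate the rotation angles and to establish distinctness of the unit eigenvalues, which upgrades spectral to strong stability. Here I would extract the rotation number of each elliptic block from $i_1$ and the mean index $\hat i$: a block angle lies in $(\pi,2\pi)$ precisely when its rotation number exceeds $\tfrac12$, a condition detected by $i_{-1}-i_1$ on that block, while the change of sign of $i_1$ between the two stable segments is what forces $\th_1\in(0,\pi)$ in the leftmost one. The inequality $2\pi-\th_2<\th_1$ of part (iii) I would obtain by comparing the two rotation numbers via the mean index $\hat i(\xi_{\aa,e})$, which controls $\th_1+\th_2$; the resulting strict separation of $e^{\pm i\th_1}$ and $e^{\pm i\th_2}$ makes the Krein forms definite and hence the stability strong rather than merely spectral.

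The step I expect to be the main obstacle is this last one. Turning the integer-valued $\om$-index data into the sharp open inequalities on $\th_1,\th_2$ --- their placement in $(\pi,2\pi)$ versus $(0,\pi)$ and the relation $2\pi-\th_2<\th_1$ --- is not a single closed-form computation but requires careful bookkeeping of how $i_{e^{i\th}}(\xi_{\aa,e})$ behaves as $\om$ crosses $1$ and $-1$, using the monotonicity and concavity of the index along $\U$ and the explicit endpoint values imported from the Lagrangian analysis of \cite{HLS}. By comparison, the decoupling into Lagrangian blocks and the coarse elliptic/hyperbolic dichotomy follow fairly directly from the machinery.
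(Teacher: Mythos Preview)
Your high-level strategy --- identify $\xi_{\aa,e}$ with the elliptic Lagrangian path $\ga_{9-\aa^2,e}$ and then extract the normal form from the $\om$-index data --- is the right one and matches the paper. But two of your intermediate steps are wrong, and the third is too vague to carry the sharp inequalities.

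First, the system does \emph{not} decouple into two one-degree-of-freedom blocks. After diagonalizing $D$ the cross term $\bar Z\cdot J\bar z=\bar Z_1\bar z_2-\bar Z_2\bar z_1$ still couples the coordinates, and the pairs $(\bar Z_1,\bar z_2)$, $(\bar Z_2,\bar z_1)$ you propose are not canonical for the given symplectic form. The Lagrangian system $\ga_{\bb,e}$ in \cite{HLS} is coupled in exactly the same way; the identification $\xi_{\aa,e}\approx\ga_{9-\aa^2,e}$ comes not from any block splitting but from the equality of the second-order operators \eqref{eqn:aca}, which is how the paper obtains it. Because there are no independent $2\times2$ blocks, you cannot ``read off'' an angle for each block separately; the placement of $\th_1,\th_2$ in $(\pi,2\pi)$ versus $(0,\pi)$ has to be argued for the whole $4\times4$ matrix via the splitting-number formula \eqref{eqn:split}, as the paper does.

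Second, your description of $\aa_k(e)$ is incorrect: it is \emph{not} the locus where $1$ enters the spectrum. By Proposition \ref{prop:index}(i) one has $\nu_1(\xi_{\aa,e})=0$ for all $\aa\in[0,3)$, so $1$ is never an eigenvalue there. Rather, $\aa_k(e)$ is by definition \eqref{eqn:def.aak} the boundary of the hyperbolic region, and the paper shows (Theorem \ref{thm:RE.lim}\ref{thm:RE.lim:k<s<=m}) that the eigenvalues enter $\U$ through a Krein collision at some $e^{i\th}$ with $\th\notin\{0,\pi\}$, giving a $N_2$-type normal form there. This matters for part (iii): the argument that eigenvalues ``return to $\U$'' after crossing $\aa_s$ does not by itself explain why, on $\aa_k(e)<\aa<\aa_s(e)$, the spectrum is already on $\U$ with definite Krein signs rather than still hyperbolic or in Krein collision.

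Finally, the inequality $2\pi-\th_2<\th_1$ cannot be obtained from the mean index alone: $\hat i$ only controls $\th_1+\th_2$ modulo integers and does not separate the two strict cases $2\pi-\th_2\lessgtr\th_1$. The paper rules out $2\pi-\th_2>\th_1$ by a direct splitting-number contradiction (the $\om$-index would become $-1$) and rules out equality $\th_1=2\pi-\th_2$ by the total-multiplicity constraint $\sum_{\aa}\nu_\om=2$ of Lemma \ref{lem:mono.ind.nul}(iii). You will need an argument of this precision; the mean-index heuristic is not enough.
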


Note that it is also possible that $ \aa_{k}(e) = \aa_s(e)$, or $\aa_s(e) = \aa_m(e)$ for some $e$ in $[0,1)$. For these cases, the corresponding normal forms of $\xi_{\aa,e}$ are given in Theorem \ref{thm:RE.lim}.

For the circular case, we first numerically relationships between $\aa$ and $m_1$, $m_2$ and $m_3$. Since $m_1 + m_2 + m_3 = 1$, we can plot the stable regions with respect to in $m_1$ and $m_2$ by definition of $\aa$ in \eqref{eqn:ca}, \eqref{eqn:sum.l3.l4} and Corollary \ref{coro:cir.sta}. We show this results in (a) of Figure \ref{fig:2}.
\begin{figure}[htbp]
	\centering
  \begin{tabular}{cc}
    \includegraphics[width=0.4\textwidth]{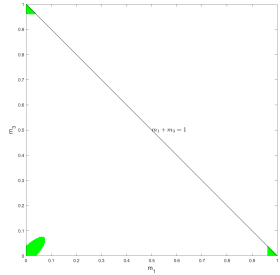} &
		\includegraphics[width=0.4\textwidth]{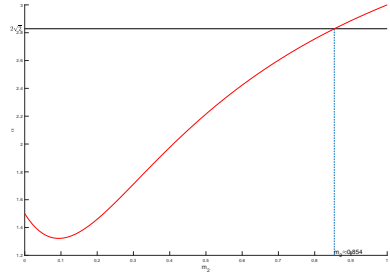}  \\
		\footnotesize{(a) The linear stability of circular solution.}  & 
		\footnotesize{(b) The symmetric case.} 
	\end{tabular}
	\caption{\footnotesize{In Figure (a), the shade regions in the $(m_1,m_3)$-plane show the all the possible choices of $(m_1, m_2, m_3) = (m_1, 1- m_1 - m_3, m_3)$ of the circular orbit such that the system is linear stable. The Figure (b) shows the changes of $\alpha$ with respect to $m_2$ in the symmetric case}}
	\label{fig:2}
\end{figure}

If we further assume that $m_1 = m_3$. 
We have the following numerical results holds shown in (b) of Figure \ref{fig:2}.
\begin{theorem}\lb{thm:numerically}
  If $e = 0$ and $m_1 = m_3$, $\xi_{\aa,e}$ is linear stable for $m_2 \in (0.854,1)$.
\end{theorem}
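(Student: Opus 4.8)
The plan is to reduce Theorem \ref{thm:numerically} to the circular stability criterion of Corollary \ref{coro:cir.sta} by computing the mass parameter $\alpha$ explicitly as a function of $m_2$ in the symmetric regime $m_1 = m_3$, and then locating numerically the value of $m_2$ at which $\alpha(m_2)$ enters the stable range. Since $m_1+m_2+m_3=1$, the substitution $m_1 = m_3 = (1-m_2)/2$ turns every quantity below into a function of the single parameter $m_2$.

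First I would fix the geometry dictated by the symmetry. With $m_2$ in the middle and $m_1=m_3$ at the ends, the three collinear primaries form a central configuration symmetric about their center of mass; placing this center at the origin with the primaries on the $x$-axis, the Euler collinear equation \eqref{eqn:N-1CC} determines the common separation as a function of $m_2$ (after normalizing $\lambda$). The massless body, required to span $\R^2$ together with the primaries, must by the residual reflection symmetry sit on the perpendicular bisector, i.e. on the $y$-axis; its height is then a positive root of the planar central-configuration equation for $q_4$ in \eqref{eqn:N-1CC}. This produces explicit coordinates $a_1,a_2,a_3,a_4$ depending on $m_2$ alone.

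Next I would substitute these coordinates into \eqref{matrix.D} to obtain the matrix $D=D(m_2)$. Because $a_4$ lies on the symmetry axis and $m_1=m_3$ gives $|a_1-a_4|=|a_3-a_4|$, the off-diagonal entries of the rank-one correction in \eqref{matrix.D} cancel in pairs, so $D$ is diagonal in the $(x,y)$-frame and its eigenvalues $\lambda_3(m_2),\lambda_4(m_2)$ can be read off directly; this also re-confirms $\lambda_3+\lambda_4=3$ from Proposition \ref{prop.sum.lm3.lm4}. The parameter $\alpha(m_2)$ then follows from its defining relations \eqref{eqn:ca} and \eqref{eqn:sum.l3.l4}.

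Finally, for $e=0$ the linearized system \eqref{eqn:LinearHam1} has constant coefficients (the factor $1/(1+e\cos\theta)$ becomes $1$), so Corollary \ref{coro:cir.sta} yields the stable range as an explicit interval of $\alpha$ bounded by the values of $\alpha_k(0),\alpha_s(0),\alpha_m(0)$. I would therefore compare $\alpha(m_2)$ against these endpoints and solve $\alpha(m_2)=\alpha_\ast$ for the threshold, obtaining $m_2\approx 0.854$. The main obstacle is computational rather than conceptual: the positions of $a_2$ and $a_4$ are roots of transcendental central-configuration equations with no closed form, so $\alpha(m_2)$ cannot be written explicitly, and the assertion that stability holds on precisely $(0.854,1)$ rests on verifying numerically that $\alpha(m_2)$ crosses the critical value exactly once on $[0,1)$ and stays inside the stable band for all larger $m_2$. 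Establishing this monotonicity, or rigorously bounding the error in the crossing point, is the delicate step.
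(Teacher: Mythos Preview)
Your proposal is correct and follows essentially the same route as the paper: exploit the symmetry $m_1=m_3$ to place the primaries explicitly on the $x$-axis with $a_2$ at the origin and $a_4$ on the $y$-axis, compute the (diagonal) matrix $D$ and hence $\alpha(m_2)$, and then invoke Corollary~\ref{coro:cir.sta} to read off stability from the threshold $\alpha=2\sqrt{2}$. The paper does slightly more than you anticipate analytically---the primary positions are in fact closed-form (only the height $y$ of $a_4$ solves a transcendental equation), and the monotonicity $\mathrm{d}y/\mathrm{d}m_2<0$ is proved by hand---but the final determination of the crossing $m_2\approx 0.854$ is, as you correctly identify, left to the numerics of Figure~\ref{fig:2}(b).
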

This paper is organized as follows. We first introduce the generalized symplectic reduction method to the restricted $N$-body problem in Section \ref{sec:2}. We then use the $\om$-Maslo index theory to obtain the linear stability of the restricted $4$-body problem in Section \ref{sec:3}. We consider the symmetric case by assuming $m_1 = m_3$ in Section \ref{sec:3}. We apply the linear stability results in Section \ref{sec:3} and obtain the condition on $m_1$, $m_2$ and $m_3$ such that the system is linear stable.

\setcounter{equation}{0}
\section{Reduction for the restricted $N$-body problem}\label{sec:2}
The central configuration coordinates
for a class of periodic solutions of the $n$-body problem was introduced \cite{MS}. 
In this section, we generalize this reducution to the restricted $N$-body problem.
For the given masses $m=(m_1,m_2,\ldots,m_{N-1})\in (\R^+)^{N-1}$
of $N-1$ primaries
let $a=(a_1,a_2,\ldots,a_{N -1})$ be an $(N - 1)$-body central configuration of $m$  satisfying \eqref{eqn:N-1CC}.
Using normalization $2I(a) = \sum_{i = 1}^{N-1}m_i |a_i|^2 = 1$ and assuming  $\mu = U(a) = \sum_{1\le i<j\le N-1}\frac{m_im_j}{|a_i-a_j|}$, we have
\begin{align}
  \sum_{j=1,j\ne i}^{N-1}\frac{m_j(a_{j}-a_{i})}{|a_{j}-a_{i}|^3}
          = -\frac{U(a)}{2I(a)}a_{i}=-\mu a_{i}.    \label{eq.of.cc}
\end{align}

\begin{proposition}\label{P2.1}
There exists a symplectic coordinate change
$
  \xi = (P,q)^T
    \;\mapsto\; \bar{\xi}
    = ( \bar{Z}, \bar{z})^T,$
such that using the true anomaly $\th$ as the variable the resulting Hamiltonian function of the
$n$-body problem is given by
\begin{align}\lb{eqn:red.Ham}
  H(\theta,\bar{Z},\bar{z})=\frac{1}{2}|\bar{Z}|^2 + \bar{z}\cdot J\bar{Z}
+ \frac{p-r}{2p}|\bar{z}|^2
- \frac{r}{\sg}\sum_{i=1}^{N-1}{m_i\over|\sg a_i-\bar{z}|},
\end{align}
where $J =( 
\begin{smallmatrix}
       0 &-1 \\ 1 & 0
\end{smallmatrix})$, $r(\th)=\frac{p}{1+e\cos\th}$,
$\mu$ defined by \eqref{eq.of.cc}, $\sg=(\mu p)^{1/4}$ and $p$ is given in \eqref{rTh}.
\end{proposition}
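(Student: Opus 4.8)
The plan is to realize the coordinate change $\xi=(P,q)^T\mapsto\bar\xi=(\bar Z,\bar z)^T$ as a composition of four elementary canonical transformations, following the central configuration coordinates of Meyer--Schmidt \cite{MS} but applied only to the single massless body, the primaries' prescribed motion $r(\th)R(\th)a_i$ now entering as an explicitly time-dependent potential in \eqref{HaFun}. Starting from that inertial Hamiltonian, I would first pass to the uniformly rotating frame $q=R(\th)u$, $P=R(\th)U$. Since $R(\th)=e^{\th J}$ is orthogonal and commutes with $J$, this is canonical via the type-two generating function $F_2=U^T R(\th)^T q$; it leaves the potential invariant because $|rR a_i-Ru|=|ra_i-u|$, and it contributes the Coriolis term $\pt F_2/\pt t=-\dot\th\,U^T J u$ (dots being $d/dt$ here).

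Second, I would apply the pulsating rescaling $u=(r/\sg)w$, $U=(\sg/r)W$, which is canonical through $F_2=(\sg/r)\,u\cdot W$ and produces the extra term $-(\dot r/r)\,w\cdot W$; it turns the potential into $-(\sg/r)\sum_i m_i/|\sg a_i-w|$ and the kinetic part into $(\sg^2/2r^2)|W|^2$. Third, I would change the independent variable from $t$ to the true anomaly $\th$. The homographic motion obeys the Kepler relations $r^2\dot\th=\sg^2=\sqrt{\mu p}$ (the angular momentum, matching $\sg=(\mu p)^{1/4}$) and $\dot r=\sg^2 e\sin\th/p$; since $dt/d\th=r^2/\sg^2$ depends on $\th$ alone, multiplying the current Hamiltonian by $r^2/\sg^2$ yields a legitimate Hamiltonian for the reparametrized non-autonomous system. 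After this rescaling the kinetic term becomes $\tfrac12|W|^2$, the Coriolis term collapses to $w\cdot JW$ (the factors $r^2/\sg^2$ and $\dot\th=\sg^2/r^2$ cancel), the potential becomes the desired $-(r/\sg)\sum_i m_i/|\sg a_i-w|$, and the scaling remainder becomes $-\tfrac{e\sin\th}{1+e\cos\th}\,w\cdot W$ after substituting $\dot r$ and $r=p/(1+e\cos\th)$.

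The remaining and most delicate step is a symplectic shift $w=\bar z$, $W=\bar Z-\bb(\th)\bar z$ designed both to cancel the surviving cross term $w\cdot W$ and to synthesize the quadratic coefficient $\tfrac{p-r}{2p}$. This is canonical for any scalar $\bb(\th)$ via $F_2=w\cdot\bar Z-\tfrac{\bb}{2}|w|^2$, which adds $-\tfrac12(d\bb/d\th)|\bar z|^2$, while the Coriolis and potential terms are untouched because $\bar z\cdot J\bar z=0$ and $\bar z=w$. Choosing $\bb(\th)=-\tfrac{e\sin\th}{1+e\cos\th}$ removes the cross term exactly, and I would then check that the induced coefficient of $|\bar z|^2$, namely $-\tfrac12\!\left(\bb^2+\tfrac{d\bb}{d\th}\right)$, reduces to the target $\tfrac{p-r}{2p}=\tfrac12\cdot\tfrac{e\cos\th}{1+e\cos\th}$. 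The crux is the identity
\[
  \bb^2+\frac{d\bb}{d\th}=-\frac{e\cos\th}{1+e\cos\th},
\]
which I expect to be the main obstacle: together with fixing a consistent sign convention for $J$ and tracking the $\dot\th,\dot r$ factors correctly through the reparametrization, it is exactly what makes the four contributions assemble into \eqref{eqn:red.Ham}. Everything else is routine orthogonality and generating-function bookkeeping.
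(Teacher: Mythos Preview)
Your argument is correct and follows the same Meyer--Schmidt four-step scheme as the paper, but the bookkeeping is organized differently. The paper's Step~2 performs the radial dilation with the \emph{shifted} momentum $\hat Z=\tfrac{1}{r}\td Z+\dot r\,\td z$ (generating function $\tfrac{1}{r}\td Z\cdot\hat z+\tfrac{\dot r}{2r}|\hat z|^2$), which immediately produces the term $\tfrac{r\ddot r}{2}|\td z|^2$; the quadratic coefficient $\tfrac{p-r}{2p}$ then falls out of the Kepler radial equation $\ddot r=\mu(p/r^3-1/r^2)$ after dividing by $\dot\th$ and rescaling by $\sg$. You instead take a pure scaling $u=(r/\sg)w$, postpone the momentum shift to a separate final step in $\th$-coordinates, and recover the same coefficient through the Riccati identity $\bb^2+\bb'=-\tfrac{e\cos\th}{1+e\cos\th}$ for $\bb=-\tfrac{e\sin\th}{1+e\cos\th}$ (which indeed holds, as a short computation shows). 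The paper's ordering is slightly slicker because the Kepler ODE does the algebra for you; yours is more hands-on but equally valid, and merging the $r$- and $\sg$-scalings into one step is a harmless simplification.
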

\begin{proof}
  Inspired by Lemma 3.1 of \cite{MS}, we carry the coordinate
  changes in four steps.
  
  {\bf Step 1.} {\it Rotating coordinates via the matrix $R(\th(t))$ in time $t$.}
  
  We change first the coordinates $\xi$ to $\hat{\xi}=(\hat{Z}, \hat{z})^T \in (\R^2)^2$
  which rotates with the speed of the true anomaly. The transformation matrix is given by the rotation
  matrix $R(\th) = (
  \begin{smallmatrix}
         \cos\th &-\sin\th \\\sin \th & \cos\th
  \end{smallmatrix})$. The generating function of this
  transformation is given by
  \be  \hat{F}(t, P, \hat{z})
         = -P\cdot R(\th)\hat{z},  \lb{P-02}\ee
  and for $1\le i\le n-1$ the transformation is given by
  \be
  q = -\frac{\pt \hat{F}}{\pt P} = R(\th)\hat{z},  \quad 
  \hat{Z} = -\frac{\pt \hat{F}}{\pt \hat{z}} = R(\th)^TP,  \lb{P-03}. \ee
  Writing $\dot{R}(\th(t))=\frac{d}{dt}R(\th(t))$, and noting that $R(\th)^T=R(\th)^{-1}$ and
  $\dot{R}(\th)=\dot{\th}JR(\th)$ we obtain the function
  \begin{align}
    \hat{F}_t
  \equiv \frac{\pt \hat{F}}{\pt t} = -P\cdot\dot{R}(\th)\hat{z} 
  = -\hat{Z}\cdot R(\th)^T\dot{R}(\th)\hat{z} 
  = -\dot{\th}\left(\hat{Z}\cdot R(\th)^TJR(\th)\hat{z}\right) 
  = \dot{\th}(\hat{z}\cdot J\hat{Z}).
  \end{align}
  
  Because $\th=\th(t)$ depends on $t$, by adding the function
  $\frac{\pt \hat{F}}{\pt t}$ to the Hamiltonian function $H$ in \eqref{HaFun}, as in Line 5 in p.272 of \cite{MS}, we obtain the Hamiltonian function $\hat{H}$ in the
  new coordinates:
  \be
  \hat{H}(t,\hat{Z},\hat{z}) = H_0(P,q) + \hat{F}_t  
  = \frac{1}{2}|\hat{Z}|^2 + (\hat{z}\cdot J\hat{Z})\dot{\th}
               - \sum_{i=1}^{N-1}{m_i\over|r(t)a_i-\hat{z}|},   \lb{P-07}
  \ee
  where the variables of $H_0$ are functions of $\th$, $\hat{Z}$, $\hat{z}$,  given by \eqref{P-03}.

  {\bf Step 2.} {\it Dilating coordinates via the polar radius $r=|z(t)|$.}
  
  We change the coordinates $\hat{\xi}$ to
  $\td{\xi}=(\td{Z},\td{z})$
  which dilate with $r=|z(t)|$ given by \eqref{rTh}. The position coordinates are transformed by $\hat{z} = r\td{z}$.
  It is natural to scale the momenta by $1/r$ to get $\hat{Z}=\td{Z}/r$. 
  But it
  turns out that the new transformation
  \be  \hat{Z}=\frac{1}{r}\td{Z}+\dot{r}\td{z}  \lb{P-09}\ee
  makes the resulting Hamiltonian function simpler. 
  This transformation is generated by the function
  \be  \td{F}(t, \td{Z}, \hat{z})
    = \frac{1}{r}\td{Z}\cdot\hat{z} + \frac{\dot{r}}{2r}|\hat{z}|^2,
                          \lb{P-10}\ee
  and is given by
  \be
  \td{z} = \frac{\pt \td{F}}{\pt \td{Z}} = \frac{1}{r}\hat{z},   
  \hat{Z} = \frac{\pt \td{F}}{\pt \hat{z}} = \frac{1}{r}\td{Z}+\frac{\dot{r}}{r}\hat{z} = \frac{1}{r}\td{Z}+\dot{r}\td{z}, \nn\ee
  with
  \be  \frac{\pt \td{F}}{\pt t}
  = -\frac{\dot{r}}{r^2}\td{Z}\cdot\hat{z}
         + \frac{\ddot{r}r-\dot{r}^2}{2r^2}|\hat{z}|^2
  = -\frac{\dot{r}}{r}\td{Z}\cdot\td{z}
         + \frac{\ddot{r}r-\dot{r}^2}{2}|\td{z}|^2,     \lb{P-11}\ee
  by \eqref{P-09}.
  
  In this case, as in the last two lines on p.272 of \cite{MS}, the
  Hamiltonian function $\hat{H}$ in \eqref{P-07}
  becomes the new Hamiltonian function $\td{H}$ in the new coordinates:
  \bea
  \td{H}(t,\td{Z},\td{z})
  &\equiv& \hat{H}(t, \hat{Z}, \hat{z}) + \td{F}_t \nn\\
  &=& \frac{1}{2r^2}|\td{Z}|^2 + \frac{\dot{r}}{r}\td{Z}\cdot\td{z}
     + \frac{\dot{r}^2}{2}(|\td{z}|^2) + (\td{z}\cdot J\td{Z})\dot{\th}
     - \sum_{i=1}^{N-1}{m_i\over|r(t)a_i-r(t)\tilde{z}|} + \tilde{F}_t   \nn\\
  &=& \frac{1}{2r^2}|\td{Z}|^2+ \frac{r\ddot{r}}{2}|\td{z}|^2
     +(\td{z}\cdot J\td{Z})\dot{\th}
     - \frac{1}{r}\sum_{i=1}^{N-1}{m_i\over|a_i-\tilde{z}|}.   \lb{P-12}\eea

  {\bf Step 3.} {\it Coordinates via the true anomaly $\th$ as the independent variable.}
  
  Here we use the true anomaly $\th\in [0,2\pi]$ as an independent variable instead of $t\in [0,T]$ to simplify
  the study. This is achieved by dividing the Hamiltonian function $\td{H}$ in \eqref{P-12} by $\dot{\th}$. Assuming
  $\dot{\th}(t)>0$ for all $t\in [0,T]$ and  $\td{\xi}\in W^{1,2}(\R/(T\Z), \R^8)$ we consider the action functional
  corresponding to the Hamiltonian system:
  \bea f(\td{\xi})
  &=& \int_0^T(\frac{1}{2}\dot{\td{\xi}}(t)\cdot J\td{\xi}(t) - \td{H}(t,\td{\xi}(t))) dt  \nn\\
  &=& \int_0^{2\pi}\left(\frac{1}{2}\frac{\dot{\td{\xi}}(t(\th))}{\dot{\th}(t)}\cdot J\td{\xi}(t)
            - \frac{\td{H}(t,\td{\xi}(t(\th)))}{\dot{\th}(t)}\right) d\th  \nn\\
  &=& \int_0^{2\pi}\left(\frac{1}{2}\td{\xi}'(\th)\cdot J\td{\xi}(\th) - \td{H}(\th,\td{\xi}(\th))\right) d\th.  \nn\eea
  Here we used $\td{\xi}'(\th)$ to denote the derivative of $\td{\xi}(\th)$ with respect to $\th$. But
  in the following we shall still write $\dot{\td{\xi}}(\th)$ for the derivative with respect to $\th$ instead of
  $\td{\xi}'(\th)$ for notational simplicity.
  
  Note that the elliptic Kepler orbit \eqref{rTh} satisfies
  $ r(t)^2\dot{\th}(t) = \sqrt{\mu p} = \sqrt{\mu a(1-e^2)} = \sg^2$ with $\sg=(\mu p)^{1/4}.$
  Note that $a=\mu^{1/3}(T/2\pi)^{2/3}$ with $T$ being the minimal period of the orbit \eqref{rTh}, we have
  $$  \sg = (\mu a(1-e^2))^{1/4} = \mu^{1/3}(\frac{T}{2\pi})^{1/6}(1-e^2)^{1/4}\;\in\; (0,\mu^{1/3}(\frac{T}{2\pi})^{1/6}] $$
  depending on $e$, when the mass $\mu$ and the period $T$ are fixed. Note that similarly we have $p=\sg^4/\mu$ depends
  on $e$ too. Note that the function $r$ satisfies
  $$  \ddot{r} = \frac{\mu p}{r^3} - \frac{\mu}{r^2} = \mu\left(\frac{p}{r^3} - \frac{1}{r^2}\right).  $$
  
  Therefore we get the Hamiltonian function $\td{H}$ in the new coordinates:
  \bea
  \td{H}(\th,\td{Z},\td{z})
       &\equiv& \frac{1}{\dot{\th}}\td{H}(t,\td{Z},\td{z})   \nn\\
  &=& \frac{1}{2r^2(t)\dot{\th}(t)}|\td{Z}|^2
         + \frac{r(t)\ddot{r}(t)}{2\dot{\th}(t)}|\td{z}|^2 
         + \td{z}\cdot J\td{Z}
         - \frac{1}{r(t)\dot{\th}(t)}\sum_{i=1}^{N-1}{m_i\over|a_i-\tilde{z}|}  \nn\\
  &=& \frac{1}{2\sg^2}|\td{Z}|^2+ \td{z}\cdot J\td{Z}
         + \frac{\mu(p-r(\th))}{2\sg^2}|\td{z}|^2
         - \frac{r(\th)}{\sg^2}\sum_{i=1}^{N-1}{m_i\over|a_i-\tilde{z}|},  \lb{P-13}\eea
  where $r(\th)=p/(1+e\cos\th)$. Note that now the minimal period $T$ of the elliptic solution $\td{z}=\td{z}(\th)$
  becomes $2\pi$ in the new coordinates in terms of true anomaly $\th$ as an independent variable.

  {\bf Step 4.} {\it Coordinates via the dilation of $\sg=(p\mu)^{1/4}$.}
  
  The last transformation is the dilation $(\td{Z},\td{z}) \;=\;
        (\sg\bar{Z},\sg^{-1}\bar{z})$.
  This transformation is symplectic and independent of the true anomaly $\th$. Thus the
  Hamiltonian function $\td{H}$ in \eqref{P-13} becomes a new Hamiltonian function:
  \be
  H(\th,\bar{Z},\bar{z}) \equiv
      \td{H}(\th,\sg\bar{Z},\sg^{-1}\bar{z})  
      = \frac{1}{2}|\bar{Z}|^2 + \bar{z}\cdot J\bar{Z}
        + \frac{p-r}{2p}|\bar{z}|^2
        - \frac{r}{\sg}\sum_{i=1}^{N-1}{m_i\over|\sg a_i-\bar{z}|}.  \lb{P-15}
  \ee
  The proof is complete. 
\end{proof}

Suppose that  $(P(t),q(t))^T$ is the solution of  system \eqref{1.2} with
$
q(t)=r(t)R(\theta(t))a_{N}$, and $P(t)=\dot{q}(t).$
By Proposition  \ref{P2.1}, it is transformed to the new solution $(\bar{Z}(\theta),\bar{z}(\theta))^T$ in the true anomaly $\theta$ as the new
variable for the original Hamiltonian function $H$ of \eqref{HaFun}, which is
given by
\be
\bar{Z}(\theta)
=(0,  \sigma)^T
\qquad
\bar{z}(\theta) = 
(\sigma, 0)^T
\lb{sigma-solution}\ee
Therefore, we can prove the Theorem \ref{linearized.Hamiltonian} directly. 

\begin{proof}[Proof of Theorem \ref{linearized.Hamiltonian}]
       In this proof we omit all the upper bars on the variables of $H$ in \eqref{eqn:red.Ham}. By \eqref{eqn:red.Ham}, we have
       \begin{align}
        H_z=JZ+\frac{p-r}{p}z
        -\frac{r}{\sigma}{\partial\over\partial z}\left(\sum_{i=1}^{N-1}{m_i\over|\sg a_i-\bar{z}|}\right)|_{z=\sg a_{N}},  \quad H_{Z} =Z-Jz, 
       \end{align}
       and
       \begin{align}
        H_{ZZ}=I,\quad H_{Zz}=-J, \quad H_{zz}=J, \quad 
        H_{zz}=\frac{p-r}{p}I-\frac{r}{\sigma}{\partial^2\over\partial z^2}\left(\sum_{i=1}^{N-1}{m_i\over|\sg a_i-\bar{z}|}\right)|_{z=\sg a_{N}},
       \end{align}
       where all the items above are $2\times2$ matrices, and we denote by $H_x$ and $H_{xy}$
       the derivative of $H$ with respect to $x$, and the second derivative of $H$ with respect to
       $x$ and then $y$ respectively for $x$ and $y\in\R$.
       
       Now evaluating the corresponding functions at the special solution
       $({0,\sg},{\sg,0})^T\in\R^{4}$
       of \eqref{sigma-solution} with $z=(\sg,0)^T$, and summing them up, we
       obtain
       \begin{eqnarray}
       \frac{\partial^2}{\partial z^2}\left(\sum_{i=1}^{N-1}{m_i\over|\sg a_i-\bar{z}|}\right)\Bigg|_{\xi_0}
       &=&-\left(-\sum_{i=1}^{N-1}{m_i\over|\sigma a_i-z|^3}\right)I
          +3\sum_{i=1}^{N-1} m_i\frac{(\sg a_i-z)(\sg a_i-z)^T}{|\sigma a_i-z|^5}
       \nonumber\\
       &=&-{1\over\sg^3}\left(\sum_{i=1}^{N-1}{m_i\over|a_i-a_{N}|^3}\right)I
       +{3\over\sg^3}\sum_{i=1}^{N-1} m_i\frac{(a_i-a_{N})(a_i-a_{N})^T}{|a_i-a_{N}|^5}.
       \label{U_zz}
       \end{eqnarray}
       Hence, we have
       \begin{eqnarray}
       H_{zz}&=&I_2-{r\over p}I_2-{r\over p}{\sg^3\over\mu}
                  \left[-{1\over\sg^3}\left(\sum_{i=1}^{N-1}{m_i\over|a_i-a_{N}|^3}\right)I
                  +{3\over\sg^3}\sum_{i=1}^{N-1} m_i\frac{(a_i-a_{N+1})(a_i-a_{N})^T}{|a_i-a_{N}|^5}\right]
                  \nn\\
             &=&I_2-{r\over p}\left[I_2-{1\over\mu}\left(\sum_{i=1}^{N-1}{m_i\over|a_i-a_{N}|^3}\right)I
             +{3\over\mu}\sum_{i=1}^{N-1} m_i\frac{(a_i-a_{N})(a_i-a_{N})^T}{|a_i-a_{N}|^5}\right],
       \end{eqnarray}
       where the last equality of the first formula follows from the definition of $\mu$ in \eqref{eq.of.cc}.
       Thus the proof is complete. 
\end{proof}

\setcounter{equation}{0}
\section{The linear stability restricted $4$-body problem}\label{sec:3}

We now consider the linear stability of special relative equilibrium in the four body problem with one small mass away form the line of the three primaries which form an Euler central configuration. 
A typical example is the ERE orbit of the restricted
$4$-bodies, the Sun, the Earth, the Moon and one space station. 

\begin{proposition}\label{prop.sum.lm3.lm4}
  Suppose that $a_i$ for $1\leq i \leq 3$ forms an Euler collinear configuration and $a_i$ for $1 \leq i \leq 4$ span $\R^2$. The eigenvalues $\lm_3$ and $\lm_4$ of $D$ defined by \eqref{matrix.D} are both positive and satisfy
  \begin{align}
    \lm_3 + \lm_4 = 3.
  \end{align}
\end{proposition}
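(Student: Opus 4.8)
The plan is to read off from \eqref{matrix.D} that $D$ is an affine combination of the identity and a single positive-semidefinite matrix, and to reduce the whole statement to one scalar identity. Writing $b_i=a_i-a_4$ and $r_i=|b_i|$ for $i=1,2,3$, I set $c=\sum_{i=1}^3 m_i/r_i^3$ and $S=\sum_{i=1}^3 m_i\,b_ib_i^{T}/r_i^5$, so that \eqref{matrix.D} becomes
\[ D=\Big(1-\tfrac{c}{\mu}\Big)I_2+\tfrac{3}{\mu}S. \]
Since $\mathrm{tr}(b_ib_i^{T})=|b_i|^2=r_i^2$ we get $\mathrm{tr}(S)=c$, hence $\mathrm{tr}(D)=2+c/\mu$; as $D$ is real symmetric of size $2$, its eigenvalues satisfy $\lm_3+\lm_4=\mathrm{tr}(D)$, so the identity $\lm_3+\lm_4=3$ is equivalent to the single scalar relation $c=\mu$.

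The crux, and the step I expect to be the main obstacle, is proving $c=\mu$; I would obtain it by projecting the central configuration equation for the massless body onto the normal of the line of the primaries. Let $\ell$ be the line through the collinear primaries and $n$ a unit normal to $\ell$. Because $\sum_{i=1}^3 m_i a_i=0$ and the weighted centroid of collinear points lies on $\ell$, the origin belongs to $\ell$, so that $a_i\cdot n=0$ for $i=1,2,3$, and consequently $b_i\cdot n=-a_4\cdot n$ is independent of $i$. In the normalization \eqref{eq.of.cc} the massless body obeys the restricted-problem analogue $\sum_{i=1}^3 m_i b_i/r_i^3=-\mu a_4$ (the relation for $q_4$ in \eqref{eqn:N-1CC}). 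Taking the inner product with $n$ gives $-(a_4\cdot n)\,c=-\mu\,(a_4\cdot n)$, and since the four bodies span $\R^2$ we have $a_4\notin\ell$, i.e. $a_4\cdot n\neq0$; dividing yields $c=\mu$. This is exactly where both hypotheses enter: collinearity of the primaries makes $b_i\cdot n$ constant in $i$, while $a_4$ leaving their line makes $a_4\cdot n\neq0$.

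With $c=\mu$ in hand the constant term drops out and $D=(3/\mu)S$, so $\lm_3+\lm_4=\mathrm{tr}(D)=3\,\mathrm{tr}(S)/\mu=3$. For the positivity I would show that $S$ is positive definite: it is a sum of the rank-one positive-semidefinite matrices $b_ib_i^{T}$ with strictly positive weights $m_i/r_i^5$, so any $v$ with $v^{T}Sv=0$ must satisfy $b_i\cdot v=0$ for every $i$; but $a_1,a_2,a_3\in\ell$ while $a_4\notin\ell$, hence any two of the $b_i$ coming from distinct primaries are linearly independent and span $\R^2$, forcing $v=0$. Thus $S$, and therefore $D=(3/\mu)S$ with $\mu>0$, is positive definite, so $\lm_3,\lm_4>0$; combined with $\lm_3+\lm_4=3$ this completes the proof. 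The trace and positive-definiteness computations are mechanical once the normal-projection identity $c=\mu$ is available, so that identity is the only genuinely non-routine ingredient.
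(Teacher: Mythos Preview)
Your proof is correct and follows the same route as the paper's: both compute $\mathrm{tr}(D)=2+c/\mu$ and establish the key identity $c=\mu$ by projecting the central-configuration equation for $a_4$ onto the normal to the primaries' line (the paper puts that line on the $x$-axis and reads off the $y$-component). For the sign of the eigenvalues the paper instead applies Cauchy--Schwarz to the entries of $D$ to obtain $\det D\ge0$; your kernel argument for $S=\sum_i (m_i/r_i^5)\,b_ib_i^{T}$ is a bit cleaner and yields the strict inequality $\lambda_3,\lambda_4>0$ directly.
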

\begin{proof}
By \eqref{matrix.D} and the direct computations, we have that 
\begin{align}\lb{eqn:sum.l3.l4}
  \lambda_3+\lambda_4 =tr(D)
  =2+{1\over\mu}\sum_{i=1}^3{m_i\over|a_i-a_4|^3} 
  =3+\beta_{2,0},
  \end{align}
where $\beta_{2,0}={1\over\mu}\sum_{i=1}^3{m_i\over|a_i-a_4|^3}-1.$
Moreover, we have
\begin{align}
  D-{3+\beta_{2,0}\over2}I_2
&=-{3\over2\mu}\sum_{i=1}^3{m_i\over|a_i-a_4|^3}I_2
 +{3\over\mu}\sum_{i=1}^3{m_i(a_i-a_4)(a_i-a_4)^T\over|a_i-a_4|^5}
 \\
&={3\over2\mu}\left[\sum_{i=1}^3m_i{2(a_i-a_4)(a_i-a_4)^T-|a_i-a_4|^2\over|a_i-a_4|^5}\right]\\
&=\Psi(\beta_{22,0}),
\end{align}
where
$\beta_{22,0}=\sum_{i=1}^3m_i{(z_{a_i}-z_{a_4})^2\over|a_i-a_4|^5}$
and $\Psi(z) = (\begin{smallmatrix}
  x & y\\-y & x
\end{smallmatrix})$ with $z = x+ \sqrt{-1} y$.
Note that  $\beta_{2,0}$ and $\beta_{22,0}$ are coincide with
those of (2.10) and (A.14) in \cite{LiZ}.
Therefore, following the discussion in \cite{LiZ},we have
\begin{align}\lb{eqn:l3l4}
  \lambda_3,\lambda_4={3+\beta_{22,0}\over2}\pm|\beta_{22,0}|.
\end{align}

Now suppose $a_1$, $a_2$and $a_3$ form an Euler collinear configuration where they are locate on the $x$-axis in $\R^2$. 
We set $a_i=(a_{ix},a_{iy})$ for $i=1,2,3,4$,
and hence $a_{iy}=0$ when $i=1,2,3$.
Then the central configuration equation \eqref{eq.of.cc} of $m_4$ gives
$
\sum_{j=1}^3{m_j(a_j-a_4)\over|a_j-a_4|^3}=-\mu a_4.
$
Especially, for $a_{4y}$, we have that 
\begin{equation}
\sum_{j=1}^3{m_ja_{4y}\over|a_j-a_4|^3}=\mu a_{4y}.
\end{equation}
Since $a_{4y}\ne0$, it follows that 
\begin{equation}\label{second.formula.of.mu}
\mu=\sum_{j=1}^3{m_j\over|a_j-a_4|^3},
\end{equation}
and hence $\beta_{2,0}=0$. By \eqref{eqn:l3l4}, we have
$\lambda_3+\lambda_4=3$
and
\begin{align}
D={3\over\mu}\sum_{j=1}^3{m_j(a_j-a_4)(a_j-a_4)^T\over|a_j-a_4|^5}
={3\over\mu}
\begin{pmatrix}
       \sum_{i=1}^3{m_ix_{4i}^2\over r_{4i}^5} &
	\sum_{i=1}^3{m_ix_{4i}y_{4i}\over r_{4i}^5}\\
	\sum_{i=1}^3{m_ix_{4i}y_{4i}\over r_{4i}^5} &
	\sum_{i=1}^3{m_iy_{4i}^2\over r_{4i}^5} 
\end{pmatrix},\lb{eqn:D}
\end{align}
where $(x_{4i},y_{4i})=a_4-a_i$ and $r_{4i}=|a_4-a_i|$.
Using Cauchy's inequality, we have
\begin{equation}
\left(\sum_{i=1}^3{m_ix_{4i}y_{4i}\over r_{4i}^5}\right)^2
=\left(\sum_{i=1}^3\sqrt{m_ix_{4i}^2\over r_{4i}^5}\sqrt{m_iy_{4i}^2\over r_{4i}^5}\right)^2
\le\left(\sum_{i=1}^3{m_ix_{4i}^2\over r_{4i}^5}\right)
   \left(\sum_{i=1}^3{m_iy_{4i}^2\over r_{4i}^5}\right).
\end{equation}
Then we have $\det(D)\ge0$ and hence $\lambda_3,\lambda_4 \geq 0$. 
\end{proof}

Without loss of the generality, we assume that $\lambda_3 \geq \lambda_4$ in the following.
We define the operator $\cA(\aa, e)$ by
\begin{align}\lb{eqn:ca}
  \cA(\aa, e) &= -\frac{\d^2}{\d t^2}I_2  -I_2 + \frac{1}{1+e\cos t}R(t)D_{\alpha,e}R(t)^T \nn \\
  &= -\frac{\d^2}{\d t^2}I_2  -I_2 + \frac{1}{2(1+e\cos t)}(
  (\lambda_3 + \lambda_4)I_2 + (\lambda_3 - \lambda_4) S(t))\\
  &= -\frac{\d^2}{\d t^2}I_2  -I_2 + \frac{1}{2(1+e\cos t)}(
       3I_2 + \alpha S(t)),  \lb{cA}
\end{align}
where $R(t) = (\begin{smallmatrix}\cos t & -\sin t\\ \sin t & \cos t \end{smallmatrix})$,
$D_{\bb,e}=
(\begin{smallmatrix}
\lambda_3 & 0\\ 0 & \lambda_4
\end{smallmatrix})$,
$S(t) = (\begin{smallmatrix}
\cos 2t & \sin 2t\\ \sin 2t & -\cos 2t
\end{smallmatrix})$ and  $\alpha:= \lambda_3 - \lambda_4 \in [0,3]$.
We denote the Morse indices and nullity of 
$\cA(\aa,e)$ on the domain $\ol{D(\om,2\pi)}$ by $\phi_{\om} (\cA(\aa,e))$ and $\nu_{\om} (\cA(\aa,e))$ respectively. We also denote 
the $\om$-Maslov index of $\xi_{\aa,e}$ by $i_{\om}(\xi_{\aa,e})$ and the nullity by $\nu_{\om}(\xi_{\aa,e})$.
By Lemma \ref{lem:2.1} and the transformation introduced by Section 2.4 of \cite{HLS}, 
we have for any $\aa \in [0,3]$, $e\in[0,1)$ and $\om\in \U$,
\begin{align}
  \phi_{\om} (\cA(\aa,e)) = i_{\om}(\xi_{\aa,e}), \quad \nu_{\om} (\cA(\aa,e)) = \nu_{\om}(\xi_{\aa,e}). \lb{eqn:ind.equ}
\end{align}

According to (2.17) and (2.18) of \cite{HLS}, the essential part $\gamma=\gamma_{\beta, e}(t)$ of the fundamental solution of the Lagrangian orbit satisfies
\begin{align}\lb{eqn:lagga}
\begin{cases}
&\dot{\gamma}(t)=J B(t) \gamma(t), \\
&\gamma(0)=I_{4},
\end{cases}
\end{align}
with
$B(t)=\left(\begin{smallmatrix}
I & -J \\
J & I_2 - \frac{K_{\beta}}{1+e\cos t}
\end{smallmatrix}\right),
$
where $e$ is the eccentricity and $K_{\beta}=\diag\{\frac{3+\sqrt{9-\beta} }{2}, \frac{3-\sqrt{9-\beta}}{2}\}$.
 For $(\beta, e) \in[0,9) \times[0,1)$, the second order differential operator corresponding to (\ref{eqn:lagga}) is given by
 \begin{align}
 A(\beta, e) =-\frac{\mathrm{d}^{2}}{\mathrm{~d} t^{2}} I_{2}-I_{2}+\frac{1}{2(1+e \cos t)}\left(3 I_{2}+\sqrt{9-\beta} S(t)\right), \lb{eqn:lagA}
 \end{align}
 where $S(t)=\left(\begin{smallmatrix}\cos 2 t & \sin 2 t \\ \sin 2 t & -\cos 2 t\end{smallmatrix}\right)$, defined on the domain $\ol{D(\om,2\pi)}$ in \eqref{A.4}. Then it is self-adjoint and depends on the parameters $\beta$ and $e$. 
Let $\beta := 9 - \alpha^2$ in \eqref{eqn:lagA}, we have that 
\begin{align}\lb{eqn:aca}
       A(9-\alpha^2,e ) = \cA(\alpha ,e).
\end{align}
It follows that the Maslov index of $i_{\omega}(\xi_{\alpha ,e}) = i_{\omega}(\gamma_{9-\alpha^2,e})$ by letting $\beta = 9 - \alpha^2$ in \eqref{eqn:lagga}.
By \cite{HLS}, the Maslov index of the $\xi_{\alpha ,e}$ is given as follows.
\begin{proposition}\lb{prop:index}
\begin{enumerate}[label=(\roman*)]
       \item For all $(\alpha,e) \in [0,3] \times [0,1)$, the Maslov index satisfies $i_1(\xi_{\alpha ,e}) = 0;$
       \item for $e\in [0,1)$, 
       \begin{align}
              i_{\omega}\left(\xi_{3, e}\right) =
              \begin{cases}
                     2, \mbox{ if } \om \in\U \bs \{0\},\\
                     0, \mbox{ if } \om = 1;
              \end{cases}
              \quad 
              v_{\omega}\left(\xi_{3, e}\right)=
              \begin{cases}3, & \text { if } \omega=1, \\ 
              0, & \text { if } \omega \in \mathbf{U} \backslash\{1\};
       \end{cases}
       \end{align}
       \item  for $\omega \in \mathbf{U}$ and $e\in [0,1)$, we have 
       $i_{\omega}\left(\xi_{0, e}\right) =0,$ and $
       v_{\omega}\left(\xi_{0, e}\right)=0.$
\end{enumerate}
\end{proposition}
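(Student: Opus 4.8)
The plan is to prove all three statements by transporting the already-established index theory of the elliptic Lagrangian solution in \cite{HLS} through the operator identity \eqref{eqn:aca}. Combining \eqref{eqn:ind.equ}, \eqref{eqn:aca}, and the analogous Morse-index/Maslov-index correspondence for the Lagrangian operator $A(\beta,e)$ of \eqref{eqn:lagA} (Section 2.4 of \cite{HLS}), I obtain, for every $\om\in\U$, $e\in[0,1)$ and $\alpha\in[0,3]$,
\[ i_{\om}(\xi_{\alpha,e}) = \phi_{\om}(\cA(\alpha,e)) = \phi_{\om}(A(9-\alpha^2,e)) = i_{\om}(\gamma_{9-\alpha^2,e}), \qquad \nu_{\om}(\xi_{\alpha,e}) = \nu_{\om}(\gamma_{9-\alpha^2,e}). \]
Thus each statement reduces to reading off the corresponding value for the Lagrangian essential part $\gamma_{\beta,e}$ at a specific $\beta=9-\alpha^2$: part (i) is the whole band $\beta\in[0,9]$ evaluated at $\om=1$, part (ii) is the endpoint $\beta=0$ (since $\alpha=3$), and part (iii) is the endpoint $\beta=9$ (since $\alpha=0$).

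For part (i) I would show $\phi_1(\cA(\alpha,e))=0$ for all $(\alpha,e)\in[0,3]\times[0,1)$, i.e. that the self-adjoint operator $\cA(\alpha,e)$ carries no negative direction on the periodic domain $\ol{D(1,2\pi)}$. This is exactly the statement $i_1(\gamma_{\beta,e})=0$ for $\beta\in[0,9]$ established in \cite{HLS}; at bottom it reflects that the elliptic relative equilibrium is a local action minimizer, so its Hessian $\cA(\alpha,e)$ has vanishing Morse index at $\om=1$.

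For parts (ii) and (iii) I would exploit that $\beta=0$ and $\beta=9$ are precisely the two values at which the potential term of $\cA$ degenerates, and then quote the explicit endpoint computations of \cite{HLS}. At $\alpha=0$ (i.e. $\beta=9$) the matrix $S(t)$ drops out of \eqref{cA} and $\cA(0,e)=\left(-\tfrac{\d^2}{\d t^2}-1+\tfrac{3}{2(1+e\cos t)}\right)I_2$ splits into two copies of a single scalar Sturm--Liouville operator; at $e=0$ this scalar operator is just $-\tfrac{\d^2}{\d t^2}+\tfrac12$, which is positive definite with spectrum $\{(n+\theta)^2+\tfrac12\}$ on every $\om$-domain, so $i_{\om}=\nu_{\om}=0$, and \cite{HLS} shows this persists for all $e\in[0,1)$, giving (iii). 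At $\alpha=3$ (i.e. $\beta=0$) one has $K_0=\diag\{3,0\}$ so the second mode collapses to the $2\pi$-periodic oscillator $\ddot v+v=0$, producing the degeneracy recorded in \cite{HLS}, namely $i_{\om}(\gamma_{0,e})=2$ for $\om\in\U\setminus\{1\}$, $i_1(\gamma_{0,e})=0$, with $\nu_1(\gamma_{0,e})=3$ and $\nu_{\om}=0$ otherwise; transporting these values yields (ii).

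The main obstacle is controlling the two endpoints for all eccentricities rather than merely at $e=0$. The positivity underlying (i) and (iii) is transparent in the autonomous case $e=0$, but the potential $-1+\tfrac{3}{2(1+e\cos t)}$ turns sign-indefinite as $e\to1$, so the nonnegativity and the absence of $\om$-degeneracies are genuinely non-autonomous facts that I would import from the estimates of \cite{HLS} rather than reprove. The subtlest point is the large nullity $\nu_1(\gamma_{0,e})=3$ in (ii): by Proposition \ref{prop.sum.lm3.lm4} the value $\beta=0$ (equivalently $\lambda_4=0$) is a boundary value of the admissible range not attained by a genuinely planar Euler configuration, since it forces the four bodies to be collinear; hence this nullity should be read as the limiting monodromy degeneration computed in \cite{HLS} and used here only as an index-theoretic anchor for locating the curves $\alpha_k$, $\alpha_s$ and $\alpha_m$.
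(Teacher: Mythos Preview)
Your proposal is correct and follows exactly the paper's approach: it reduces everything to the Lagrangian index computations of \cite{HLS} via the identification $\beta=9-\alpha^2$ in \eqref{eqn:aca}, together with the Morse--Maslov correspondence \eqref{eqn:ind.equ}. The paper's own proof is nothing more than three one-line citations to the relevant parts of \cite{HLS}, so your additional remarks on the scalar splitting at $\alpha=0$ and the degeneracy mechanism at $\alpha=3$ are elaborations rather than a different argument.
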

\begin{proof}
       By Theorem 1 of \cite{HLS} and \eqref{eqn:aca}, we have (i) of this proposition holds. By (C) of Section 3 of \cite{HLS}, we have (ii) of this proposition holds. By Section 3.2 of \cite{HLS}, we have (iii) of this proposition holds.
\end{proof}

To study the monotonic of the operator $\cA(\aa, e)$, we define the operator $\bar{\cA}$ as the following.
\begin{align}
       \bar{\cA}(\aa, e)=\frac{\cA(0, e)}{\aa}+\frac{S(t)}{2(1+e \cos t)}. \lb{eqn:barA}
\end{align}
It follows that $\cA(\aa ,e) =\aa \bar{\cA}(\aa, e).$
Then we have $\phi_{\omega}(\cA(\aa, e)) =\phi_{\omega}(\bar{\cA}(\aa, e))$ and $\nu_{\omega}(\cA(\aa, e)) =\nu_{\omega}(\bar{\cA}(\aa, e))$.
Following from (iii) of Proposition \ref{prop:index} and Lemma \ref{lem:2.1}, $\cA(0, e)$ is positive definite for any $\omega$ boundary condition. Using similar arguments of Lemma 4.4 and Corollary 4.5 of \cite{HLS},we get the following lemma holds. We omit the proof here.
\begin{proposition}[cf. Lemma 4.4 and Corollary 4.5 of \cite{HLS}]\lb{prop:index}
\begin{enumerate}[label = (\roman*)]
\item For each fixed $e \in[0,1)$, the operator $\bar{\cA}(\aa, e)$ is non-increasing with respect to $\aa \in(0,3]$ for any fixed $\omega \in \mathbf{U}$. Especially
$
\left.\frac{\partial}{\partial \aa} \bar{\cA}(\aa, e)\right|_{\aa=\aa_{0}}= - \frac{1}{\aa^2} \cA(0, e),
$
for $\aa \in(0,3]$ is a negative definite operator.
\item  For every eigenvalue $\lambda_{\aa_{0}}=0$ of $\bar{A}\left(\aa_{0}, e_{0}\right)$ with $\omega \in \mathbf{U}$ for some $\left(\aa_{0}, e_{0}\right) \in$ $(0,3] \times[0,1)$, there holds
\begin{align}\lb{eqn:mono.barA}
\left.\frac{\mathrm{d}}{\mathrm{d} \aa} \lambda_{\aa}\right|_{\aa=\aa_{0}}>0.
\end{align}
\item For every fixed $e \in[0,1)$ and $\omega \in \mathbf{U}$, the index function  $i_{\omega}\left(\xi_{\aa, e}\right)$, is non-decreasing in $\alpha \in[0,3]$. Especially, if $\omega \in \mathbf{U} \backslash\{1\}$, $i_{\omega}\left(\xi_{\aa, e}\right)$ increases from $0$ to $2$.
\end{enumerate}
      
\end{proposition}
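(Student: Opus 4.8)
The plan is to reduce all three claims to the spectral theory of the single self-adjoint family $\bar{\cA}(\aa,e)$ on $\ol{D(\om,2\pi)}$, exploiting the identity $\cA(\aa,e)=\aa\,\bar{\cA}(\aa,e)$ together with the already-established chain $i_{\om}(\xi_{\aa,e})=\phi_{\om}(\cA(\aa,e))=\phi_{\om}(\bar{\cA}(\aa,e))$, where the last equality holds because multiplying by the positive scalar $\aa$ rescales every eigenvalue without changing its sign. The only external inputs I need are that $\cA(0,e)$ is positive definite for every $\om$-boundary condition --- which follows from $i_{\om}(\xi_{0,e})=\nu_{\om}(\xi_{0,e})=0$ via Lemma~\ref{lem:2.1} --- and the two boundary index values $i_{\om}(\xi_{0,e})=0$ and $i_{\om}(\xi_{3,e})=2$ for $\om\in\U\setminus\{1\}$.

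For (i) I would simply differentiate the defining formula $\bar{\cA}(\aa,e)=\tfrac{1}{\aa}\cA(0,e)+\tfrac{S(t)}{2(1+e\cos t)}$ in $\aa$: the $S(t)$-term carries no $\aa$-dependence, so $\tfrac{\partial}{\partial\aa}\bar{\cA}(\aa,e)=-\tfrac{1}{\aa^2}\cA(0,e)$. Positive definiteness of $\cA(0,e)$ then makes this derivative a negative definite operator for every $\aa\in(0,3]$, which is exactly the asserted operator monotonicity.

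For (ii) I would invoke Kato's analytic perturbation theory for the self-adjoint holomorphic family $\aa\mapsto\bar{\cA}(\aa,e_0)$: near $\aa_0$ the eigenvalue crossing zero organizes into a real-analytic branch $\aa\mapsto\lambda_\aa$ with a normalized eigenvector $x_\aa$. The Hellmann--Feynman first-order formula then gives $\tfrac{\d}{\d\aa}\lambda_\aa\big|_{\aa_0}=\langle x_{\aa_0},\,\tfrac{\partial}{\partial\aa}\bar{\cA}(\aa_0,e_0)\,x_{\aa_0}\rangle=-\tfrac{1}{\aa_0^2}\langle x_{\aa_0},\cA(0,e_0)x_{\aa_0}\rangle$. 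Because $\cA(0,e_0)$ is positive definite and $x_{\aa_0}\neq0$, this quantity is strictly nonzero, so every zero of $\lambda_\aa$ is a transversal crossing --- the content needed to control the jumps of the Morse index. The strict sign here is dictated entirely by the definiteness established in (i); under the substitution $\beta=9-\aa^2$ inherited from \cite{HLS} one must track the induced orientation, which is what fixes the sign convention appearing in the statement.

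Finally, for (iii) the operator monotonicity from (i) feeds directly into the Courant--Fischer min--max characterization: each ordered eigenvalue $\lambda_k(\aa)$ of $\bar{\cA}(\aa,e)$ is non-increasing in $\aa$, so the number of negative eigenvalues $\phi_{\om}(\bar{\cA}(\aa,e))=i_{\om}(\xi_{\aa,e})$ is non-decreasing in $\aa\in[0,3]$, and the transversality from (ii) ensures the jumps happen cleanly and only upward at the finitely many degeneracy points. The range statement then follows from the boundary values: since $i_{\om}(\xi_{0,e})=0$ and $i_{\om}(\xi_{3,e})=2$ for $\om\in\U\setminus\{1\}$, a non-decreasing integer-valued function pinned at $0$ and $2$ must increase from $0$ to $2$. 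I expect the main obstacle to be the bookkeeping at degenerate crossings where the relevant eigenvalue of $\bar{\cA}$ is not simple: there one must pass to the separate analytic eigenvalue branches of Kato and check that the derivative computation of (ii) applies to each branch, while keeping careful track of the $\om$-dependent domain $\ol{D(\om,2\pi)}$ throughout, since both the self-adjointness and the min--max argument are taken relative to that domain.
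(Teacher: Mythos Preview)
Your proposal is correct and follows essentially the approach the paper defers to, namely the arguments of Lemma~4.4 and Corollary~4.5 of \cite{HLS}; the paper itself omits the proof. One remark: your Hellmann--Feynman computation in (ii) yields a strictly \emph{negative} derivative, which is the sign consistent with the operator monotonicity of (i) and with how \eqref{eqn:mono.barA} is actually used downstream (e.g.\ in the proof of Lemma~\ref{lem:mono.ind.nul}); the ``$>0$'' in the displayed statement is a sign slip, and your remark about tracking the orientation under $\beta=9-\aa^2$ correctly diagnoses its origin.
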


By Proposition \ref{prop:index}, the $-1$-index $i_{-1}\left(\xi_{\alpha, e}\right)$ changes monotonically increases from $0$ to $2$ as $\aa$ increases from $0$ to $3$. Then there exists three distinct curves $\Gamma_{k}$, $\Gamma_{s}$,  and $\Gamma_{m}$  locating from left to right in the parameter $(\alpha, e)$ rectangle $[0,3] \times[0,1)$. The curves $\Gamma_{s}$ and $\Gamma_{m}$ are the $-1$-degenerate curves. Since the $i_{\om}(\xi_{\aa,e})$ is non-decreasing, the hyperbolic region is connected. We then can use the curve $\Gamma_{k}$ to denote the boundary of the hyperbolic region. 

More specifically, for every $e \in[0,1)$, the $-1$ index $i_{-1}\left(\xi_{\alpha, e}\right)$ is non-increasing, and strictly decreasing only on two values of $\alpha=\alpha_{1}(e)$ and $\alpha=\alpha_{2}(e) \in(0,3)$. Define 
\begin{align}\lb{eqn:aa.sm}
       \alpha_{s}(e)=\min \left\{\alpha_{1}(e), \alpha_{2}(e)\right\} \text { and } \alpha_{m}(e)=\max \left\{\alpha_{1}(e), \alpha_{2}(e)\right\}, \quad \text { for } e \in[0,1), 
\end{align}
and
\begin{align}
\Gamma_{s}=\left\{\left(\alpha_{s}(e), e\right) \mid e \in[0,1)\right\}, \text { and } \Gamma_{m}=\left\{\left(\alpha_{m}(e), e\right), \mid e \in[0,1)\right\}. \lb{eqn:gas}
\end{align}
For every $e \in[0,1)$, we define
\begin{align}
       \alpha_{k}(e)=\inf \left\{\alpha^{\prime} \in[0,3] \mid \sigma\left(\xi_{\alpha, e}(2 \pi)\right) \cap \mathbf{U} \neq \emptyset, \quad \forall \alpha \in\left[0, \alpha^{\prime}\right]\right\}, \lb{eqn:def.aak}
\end{align}
and
\begin{align}
\Gamma_{k}=\left\{\left(\alpha_{k}(e), e\right) \in[0,3] \times[0,1) \mid e \in[0,1)\right\}.
\end{align}
Therefore, 
$\Gamma_{s}, \Gamma_{m}$ and $\Gamma_{k}$ form three curves.

\begin{lemma}\lb{lem:mono.ind.nul}
       (i) For given $e_0$, if
       $(\aa_1,e_0)$ and $(\aa_1, e_0)$ satisfy that  $0<\aa_2 \leq \aa_1 < 3$
       and $\xi_{\aa_1,e_0}(2\pi)$ is hyperbolic, then $\xi_{\aa_2,e_0}(2\pi)$ is also hyperbolic.
       Consequently, the hyperbolic region of $\xi_{\aa, e}$ is connected.
       
       (ii) For $(\aa,e) \in \Gamma_h$, every matrix $\xi_{\aa, e}(2\pi)$ is hyperbolic.
       Thus $\Gamma_k$ is the boundary surface of this hyperbolic region.
       
       (iii) For any $e\in [0,1)$, 
       the total multiplicity of $\om$ degeneracy  of $\xi_{\aa, e}(2 \pi)$
       for $\aa \in[0,3]$ satisfies that $\sum_{ 0\leq \aa\leq 3} \nu_{\om}(\xi_{\aa, e}(2\pi))=2,$ for all $ \omega \in \mathbf{U} \backslash\{1\}.$
       \end{lemma}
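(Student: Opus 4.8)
My plan rests on the index identity \eqref{eqn:ind.equ}, which lets me translate every statement about eigenvalues of $\xi_{\aa,e}(2\pi)$ on $\mathbf{U}$ into a statement about the Morse index $\phi_\om$ and nullity $\nu_\om$ of the self-adjoint family $\cA(\aa,e)$, for fixed $e$ and each boundary condition $\om\in\mathbf{U}$. The one analytic input I would extract first is the \emph{transversality} of the eigenvalue crossings behind Proposition \ref{prop:index}: since $\partial_\aa\bar\cA(\aa,e)=-\aa^{-2}\cA(0,e)$ is a definite operator for every $\om\in\mathbf{U}$ (including $\om=1$), any eigenvalue of $\bar\cA(\aa,e)$ that vanishes at some $\aa_0$ passes through $0$ with nonzero speed. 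Combined with the non-decreasing monotonicity $\phi_\om=i_\om$, this shows that as $\aa$ increases through a degenerate value $\aa_0$ the Morse index $\phi_\om$ jumps upward by exactly $\nu_\om(\xi_{\aa_0,e})$; when several eigenvalues cross simultaneously I would make this precise through the crossing form / splitting numbers of \cite{Lon4} rather than a naive count.

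For (iii) I fix $e$ and $\om\in\mathbf{U}\setminus\{1\}$. By Proposition \ref{prop:index} the endpoints satisfy $i_\om(\xi_{0,e})=0$, $\nu_\om(\xi_{0,e})=0$ and $i_\om(\xi_{3,e})=2$, $\nu_\om(\xi_{3,e})=0$. The map $\aa\mapsto\phi_\om(\cA(\aa,e))=i_\om(\xi_{\aa,e})$ is a non-decreasing integer step function, constant off the degenerate set and jumping by $\nu_\om(\xi_{\aa_0,e})$ at each degenerate $\aa_0$ by the transversality above; telescoping over $[0,3]$ gives
\[
\sum_{0\le \aa\le 3}\nu_\om(\xi_{\aa,e}) = i_\om(\xi_{3,e})-i_\om(\xi_{0,e}) = 2,
\]
the endpoints contributing nothing since their nullities vanish.

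For (i) I fix $e_0$ and assume $\xi_{\aa_1,e_0}(2\pi)$ is hyperbolic with $0<\aa_2\le\aa_1<3$, treating $\om=1$ and $\om\ne1$ separately. For $\om=1$: $i_1(\xi_{\aa,e_0})=\phi_1(\cA(\aa,e_0))\equiv0$ on $[0,3]$, so by the transversality above no interior degeneracy at $\om=1$ is possible (it would force a nonzero jump of $\phi_1$); hence $\nu_1(\xi_{\aa,e_0})=0$ for all $\aa\in[0,3)$, in particular at $\aa_2$. For $\om\ne1$: hyperbolicity at $\aa_1$ gives $\sigma(\xi_{\aa_1,e_0}(2\pi))\cap\mathbf{U}=\emptyset$, so $i_\om(\xi_{\aa_1,e_0})$ is locally constant in $\om$ away from the spectrum \cite{Lon4} and thus equals $i_1(\xi_{\aa_1,e_0})=0$; were there an $\aa'\le\aa_1$ with $\nu_\om(\xi_{\aa',e_0})>0$, the upward jump there together with monotonicity would give $i_\om(\xi_{\aa_1,e_0})\ge1$, a contradiction. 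Thus $\nu_\om(\xi_{\aa_2,e_0})=0$ for every $\om\in\mathbf{U}$, i.e.\ $\xi_{\aa_2,e_0}(2\pi)$ is hyperbolic. Since hyperbolicity is open and $\aa=0$ is hyperbolic ($\nu_\om(\xi_{0,e_0})=0$ for all $\om$), this downward-closedness identifies the hyperbolic set with the interval $[0,\aa_k(e_0))$, hence connected.

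Part (ii) then follows at once: points of the hyperbolic region $\Gamma_h$ are hyperbolic by definition, and by the interval description from (i) its $\aa$-boundary over each $e$ is precisely $\aa_k(e)$ from \eqref{eqn:def.aak}, where continuity of the spectrum forces an eigenvalue onto $\mathbf{U}$; hence $\Gamma_k$ is the boundary surface. The point I expect to be the main obstacle is the $\om=1$ analysis in (i): one must verify that Proposition \ref{prop:index} genuinely delivers transversality at $\om=1$ — which it does, $\partial_\aa\bar\cA$ being definite irrespective of the boundary condition — so that $i_1\equiv0$ really forbids an interior eigenvalue $1$, the large nullity $\nu_1(\xi_{3,e})=3$ being consistent only because that crossing sits exactly at the excluded endpoint $\aa=3$. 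The secondary technical care is to justify the identity ``jump $=$ nullity'' for simultaneous crossings via the splitting-number calculus of \cite{Lon4}.
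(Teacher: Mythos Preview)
Your proposal is correct and follows essentially the same route as the paper: both parts rest on the index identity \eqref{eqn:ind.equ}, the strict monotonicity of $\bar\cA(\aa,e)$ in $\aa$ coming from Proposition~\ref{prop:index}, and the telescoping count of degeneracies for (iii). The only cosmetic difference is in (i): the paper argues in one stroke that hyperbolicity at $\aa_1$ forces $\bar\cA(\aa_1,e_0)$ to be positive definite on every $\ol{D}(\om,2\pi)$ and then invokes operator monotonicity directly, whereas you split off $\om=1$ and run a contradiction through the index jump --- the two are equivalent, and your handling of the $\om=1$ case is in fact more explicit than the paper's.
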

       
       \begin{proof}
       (i) By Lemma \ref{lem:2.1}, for any fixed $\om$ and $e$, 
       if $\xi_{\aa_1, e}(2\pi)$ is hyperbolic, then $\cA(\aa_1,e)$ is positive definite on $\overline{D}(\om ,2\pi)$ for any given $\om \in \U$. By \eqref{eqn:barA}, $\bar{\cA}(\aa_1,e)$ is positive definite. By \eqref{eqn:mono.barA}, we have 
       $\bar{\cA}(\aa_1,e) < \bar{\cA}(\aa_2,e)$.  It follows that $\bar{\cA}(\aa_2,e)$ is positive definite and non-degenerate for any  $\om \in \U$. Therefore $\xi_{\aa_2, e}(2\pi)$ must be hyperbolic for all $\aa \in [0,\aa_2)$.
       
       Note that when $\aa = 0$ the matrix $\xi_{\aa, e}$ is hyperbolic by (iii) of Proposition \ref{prop:index}.
       Therefore, the hyperbolic region of $\xi_{\aa, e}$ is connected.
       
       (ii) By the definition of $\aa_k(e)$, there exists a sequence$\{\aa_i\}_{i\in \N}$ satisfying $\aa_i<\aa_k(e)$, $\aa_i\to \aa_k(e)$, and $\xi_{\aa_i,e}(2\pi)$ is hyperbolic.
       Therefore $\xi_{\aa,e}(2\pi)$ is hyperbolic
       for every $\aa\in [0,\aa_k(e))$ by (i).
       Then \eqref{eqn:def.aak} holds and $\aa_k(e)$ is the envelope surface of this hyperbolic region.
       
       (iii) Note that the operator $\bar{\cA}(\aa,e)$ on $\overline{D}(\om, 2\pi)$ for $0$ and $3$ are both non-degenerate if $\om\in \U\bs\{1\}$.
       By (ii) and (iii) of Proposition \ref{prop:index} and \eqref{eqn:mono.barA}, there exist at most two $\aa_1$ and $\aa_2$ such that
       at each of which the $\om$-index decreases
       by 1 if $\aa_1 \neq \aa_2$, or the $\om$-index decreases by 2 if $\aa_1 =\aa_2$. Suppose that the two values are given by $\aa_1 = \aa_1(e)$
       and $\aa_2 = \aa_2(e)$ such that for $\ep > 0$ small enough, we have that
       $\phi_{\om}(\bar{A}(0 ,e)) = \phi_{\om}(\bar{A}(\aa_1 - \ep ,e))$, $\phi_{\om}(\bar{\cA}(\aa_1, e)) = \phi_{\om}(\bar{\cA}(\aa_2-\ep,e ))$ and $\phi_{\om}(\bar{\cA}(\aa_2,e )) =\phi_{\om}(\bar{\cA}(3,e ))$.
       Then we have that
       \begin{align}
         2 &=\phi_{\om}(\bar{\cA}(3,e ))-\phi_{\om}(\bar{\cA}(0,e )) \nn\\ 
         &=\phi_{\om}(\bar{\cA}(\aa_2,e ))-\phi_{\om}(\bar{\cA}(\aa_2 - \ep,e )) +\phi_{\om}(\bar{\cA}(\aa_1,e ))-\phi_{\om}(\bar{\cA}(\aa_1-\ep,e )) \nn\\
         &=\dim \ker(\bar{\cA}(\aa_2,e ))+\dim \ker(\bar{\cA}(\aa_1,e )) \nn \\
         &=\nu_{\om}(\xi_{\aa_2, e}(2\pi))+\nu_{\om}(\xi_{\aa_1, e}(2\pi)) \nn\\
         &=\sum_{\aa \in[0,3]} \nu_{\om}(\xi_{\aa, e}(2\pi)).
       \end{align}
       Then we have that (iii) holds.
       \end{proof}

       Following similar arguments as Corollary 9.2 of \cite{HLS}, we have the following corollary holds.
       \begin{corollary}[cf. Corollary 9.2 of \cite{HLS}]\lb{coro:null.sep}
         For every $e\in [0,1)$,  we have $\sum_{\aa \in[0,\aa_m(e))} \nu_{-1}(\xi_{\aa,e}(2\pi)) =0,$ and $\sum_{\aa \in[\aa_m(e), 3)} \nu_{-1}(\xi_{\aa,e}(2\pi))=2.$
       \end{corollary}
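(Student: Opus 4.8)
The plan is to deduce both identities from a single total-multiplicity count together with the monotonicity of the $-1$-index, exactly in the spirit of Corollary 9.2 of \cite{HLS}. Fix $e\in[0,1)$. By Lemma \ref{lem:mono.ind.nul}(iii) applied with $\om=-1$ we already have the total count
\[
\sum_{\aa\in[0,3]}\nu_{-1}(\xi_{\aa,e}(2\pi))=2 .
\]
Since $[0,\aa_m(e))$ and $[\aa_m(e),3)$ partition $[0,3)$, and since by Proposition \ref{prop:index}(ii) the endpoint $\aa=3$ carries no $-1$-nullity ($\nu_{-1}(\xi_{3,e})=0$ there), the two assertions are equivalent to the single statement that $\nu_{-1}(\xi_{\aa,e}(2\pi))$ vanishes for all $\aa$ lying strictly to the left of the relevant $-1$-degenerate curve; the complementary sum is then forced to equal $2$. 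So the whole proof reduces to locating the $-1$-degenerate values precisely and showing the stated distribution of their multiplicities.

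First I would pass to the operator side through \eqref{eqn:ind.equ}, writing $\nu_{-1}(\xi_{\aa,e}(2\pi))=\nu_{-1}(\cA(\aa,e))=\nu_{-1}(\bar{\cA}(\aa,e))$, the multiplicity of $0$ in the spectrum of the self-adjoint operator $\bar{\cA}(\aa,e)$ on $\overline{D(-1,2\pi)}$. By Proposition \ref{prop:index} the $-1$-index $\phi_{-1}(\bar{\cA}(\aa,e))=i_{-1}(\xi_{\aa,e})$ is non-decreasing in $\aa$, equals $0$ at $\aa=0$, and reaches $2$ at $\aa=3$. Moreover, by the transversal crossing relation \eqref{eqn:mono.barA} every eigenvalue curve $\lambda_\aa$ of $\bar{\cA}(\aa,e)$ passes through $0$ strictly, so a parameter is $-1$-degenerate exactly where $\phi_{-1}$ jumps, the jump there equals $\nu_{-1}(\bar{\cA}(\aa,e))$, and consequently the $-1$-degenerate parameters are isolated with nullities summing to $2$. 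This already collapses the problem to determining on which side of $\Gamma_m$ each degenerate value sits and with what multiplicity.

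The crux is precisely this localization, and here I would transport the corresponding quantitative statement of \cite{HLS} through the identification $A(9-\aa^2,e)=\cA(\aa,e)$ of \eqref{eqn:aca}, under which $i_\om(\xi_{\aa,e})=i_\om(\ga_{9-\aa^2,e})$ for the elliptic Lagrangian orbit. Concretely, a value $\aa$ is $-1$-degenerate iff a rotation angle of $\xi_{\aa,e}(2\pi)$ equals $\pi$, i.e. iff an eigenvalue reaches $-1$; by the normal-form classification of Theorem \ref{thm:RE.norm.form} this happens only at the transitions into and out of the elliptic-hyperbolic regime, so tracking the two elliptic rotation angles of $\xi_{\aa,e}(2\pi)$ as $\aa$ increases, together with the monotone $-1$-index, pins down the degenerate parameters and the distribution of $\nu_{-1}$ across $\Gamma_m$. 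Feeding this back into the count $\sum_{\aa\in[0,3]}\nu_{-1}=2$ yields the two displayed identities.

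The genuinely hard point is the localization in the third paragraph: monotonicity and the total multiplicity alone do not reveal on which side of $\Gamma_m$ the $-1$-degeneracies lie, and this must be extracted from the detailed $(\beta,e)$-analysis of \cite{HLS} via \eqref{eqn:aca} rather than from any soft index argument. Once that input is in place, the accounting against Lemma \ref{lem:mono.ind.nul}(iii) is immediate, and it is worth checking the boundary/limiting parameters (where the degenerate curves may coalesce, cf. the remark preceding Theorem \ref{thm:RE.lim}) separately, since there the two crossings merge into a single point of higher nullity and the placement relative to $\Gamma_m$ must be re-examined.
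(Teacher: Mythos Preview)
There is a genuine problem here, and it lies upstream of your argument: the corollary as printed is false whenever $\aa_s(e)<\aa_m(e)$. By the very definition in \eqref{eqn:aa.sm}, both $\aa_s(e)$ and $\aa_m(e)$ are $-1$-degenerate parameters, and when they are distinct $\aa_s(e)$ lies in $[0,\aa_m(e))$ with $\nu_{-1}(\xi_{\aa_s(e),e}(2\pi))\ge 1$. The intended statement (and the only one consistent with its later use in the proof of Theorem~\ref{thm:RE.lim}(i), where one needs $\nu_{-1}(\xi_{\aa_m(e),e})=1$) replaces $\aa_m(e)$ by $\aa_s(e)$; this is what Corollary~9.2 of \cite{HLS} gives under the change of variables $\beta=9-\aa^2$.

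Your proposal does not catch this, and your third paragraph in effect tries to argue toward the incorrect version by ``showing the degenerate values all lie to the right of $\Gamma_m$''. Had you actually carried out the localization you describe---tracking where the monotone $-1$-index jumps---you would have recovered precisely $\aa_s(e)$ and $\aa_m(e)$ as the only degenerate parameters and seen that the left sum vanishes on $[0,\aa_s(e))$, not on $[0,\aa_m(e))$.

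With the typo corrected, the proof is much shorter than what you sketch and matches the paper's one-line deferral to \cite{HLS}: by definition $\aa_s(e)=\min\{\aa_1(e),\aa_2(e)\}$ is the smallest $-1$-degenerate parameter, so $\sum_{\aa\in[0,\aa_s(e))}\nu_{-1}=0$ is a tautology; then Lemma~\ref{lem:mono.ind.nul}(iii) together with $\nu_{-1}(\xi_{3,e})=0$ forces $\sum_{\aa\in[\aa_s(e),3)}\nu_{-1}=2$. No normal-form analysis, and no detailed transport of the $(\beta,e)$-picture from \cite{HLS}, is required.
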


\begin{proposition}
    The function $\aa_k(e)$ is continuous in $e$.
\end{proposition}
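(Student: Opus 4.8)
The plan is to prove continuity of $\alpha_k$ by establishing lower and upper semicontinuity separately, both resting on the continuous dependence of the end matrix $\xi_{\alpha,e}(2\pi)$ on the parameters. As a preliminary I would record that the Hamiltonian matrix $JB(\theta)$ of the linearized system \eqref{eqn:LinearHam1} --- equivalently the operator $\cA(\alpha,e)$ in \eqref{cA} --- depends analytically on $(\alpha,e)\in[0,3]\times[0,1)$, so its fundamental solution, the end matrix $\xi_{\alpha,e}(2\pi)\in\Sp(4)$, and the spectrum $\sigma(\xi_{\alpha,e}(2\pi))$ all depend continuously on $(\alpha,e)$. Consequently the hyperbolic locus $\mathcal{H}=\{(\alpha,e):\sigma(\xi_{\alpha,e}(2\pi))\cap\U=\emptyset\}$ is open in $[0,3]\times[0,1)$, and by Lemma \ref{lem:mono.ind.nul}(i)--(ii) its slice at each fixed $e$ is exactly the interval $[0,\alpha_k(e))$.

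For lower semicontinuity, fix $e$ and choose any $\alpha_0<\alpha_k(e)$. Then $(\alpha_0,e)\in\mathcal{H}$; since $\mathcal{H}$ is open there is a neighborhood of $(\alpha_0,e)$ on which hyperbolicity persists, so $\xi_{\alpha_0,e'}(2\pi)$ is hyperbolic for all $e'$ near $e$, which forces $\alpha_k(e')>\alpha_0$. Letting $\alpha_0\uparrow\alpha_k(e)$ gives $\liminf_{e'\to e}\alpha_k(e')\ge\alpha_k(e)$.

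Upper semicontinuity is the delicate half, and I expect it to be the main obstacle, because the non-hyperbolic locus $([0,3]\times[0,1))\setminus\mathcal{H}$ is \emph{closed} rather than open, so membership in it is not preserved under perturbation of $e$. The remedy is to upgrade ``non-hyperbolic'' to an open condition just above $\alpha_k(e)$. For any $\alpha_0$ slightly larger than $\alpha_k(e)$ the matrix $\xi_{\alpha_0,e}(2\pi)$ is no longer hyperbolic, and I would argue that the eigenvalues reaching $\U$ as $\alpha$ crosses $\alpha_k(e)$ arrive with \emph{definite Krein signature}: this is precisely what the crossing monotonicity \eqref{eqn:mono.barA} encodes, and it is visible in the normal forms of Theorem \ref{thm:RE.norm.form}(iii) (and of Theorem \ref{thm:RE.lim} in the boundary sub-cases), where the factor just above $\alpha_k(e)$ contains a genuine rotation block $R(\theta)$ with $\theta\neq0,\pi$. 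Since a $\U$-eigenvalue of definite Krein type cannot leave $\U$ under a small symplectic perturbation, the set of matrices in $\Sp(4)$ carrying such an eigenvalue is open and contains $\xi_{\alpha_0,e}(2\pi)$; hence $\xi_{\alpha_0,e'}(2\pi)$ still has a $\U$-eigenvalue for $e'$ near $e$, giving $(\alpha_0,e')\notin\mathcal{H}$ and thus $\alpha_k(e')\le\alpha_0$. Letting $\alpha_0\downarrow\alpha_k(e)$ yields $\limsup_{e'\to e}\alpha_k(e')\le\alpha_k(e)$.

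Combining the two one-sided bounds gives the continuity of $\alpha_k$. Everything except the upper bound is soft; the technical care will go into two points. First, one must check that a Krein-definite elliptic pair is present immediately above $\alpha_k(e)$ \emph{in every case}, including the degenerate configuration $\alpha_k(e)=\alpha_s(e)$, where the normal form is the elliptic--hyperbolic $D(\lambda)\diamond R(\theta)$ whose $R(\theta)$ block still supplies a robust $\U$-eigenvalue. Second, to avoid any circularity with Theorem \ref{thm:RE.norm.form}, the Krein-definiteness should be read off directly from the positivity \eqref{eqn:mono.barA} of the eigenvalue crossing rather than from the normal-form classification.
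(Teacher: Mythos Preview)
Your plan is sound and reaches the conclusion, but it takes a genuinely different route from the paper. The paper argues by contradiction: assuming a sequence $e_i\to\hat e$ with $\alpha_k(e_i)\to\alpha_0>\alpha_k(\hat e)$, it picks $\omega_0\in\sigma(\xi_{\alpha_k(\hat e),\hat e}(2\pi))\cap\U$ and uses the monotonicity of the index (Proposition~\ref{prop:index}) to show that $i_{\omega_0}(\xi_{\hat\alpha,\hat e})\ge 1$ for all $\hat\alpha\in(\alpha_k(\hat e),\alpha_0]$, while on each hyperbolic segment $L_i=\{(\alpha,e_i):\alpha\in[0,\alpha_k(e_i))\}$ one has $i_{\omega_0}=\nu_{\omega_0}=0$; since $L_i$ accumulates on $(\hat\alpha,\hat e)$, this forces a contradiction in the index. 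In other words, the paper works purely with the $\omega$-Maslov index and never mentions Krein signatures or normal forms. Your Krein-stability argument is the geometric dual of this: the index jump at $\alpha_k(e)$ and the Krein-definiteness of the freshly arrived $\U$-eigenvalues encode the same information via the splitting numbers, so the two proofs are equivalent in spirit.

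Two remarks on the execution. First, your circularity concern is prudent given the paper's ordering, but in fact the proof of Theorem~\ref{thm:RE.norm.form} (and the relevant parts of Theorem~\ref{thm:RE.lim}) does not invoke the continuity of $\alpha_k$ anywhere---it rests only on Proposition~\ref{prop:index}, Lemma~\ref{lem:mono.ind.nul}, and splitting-number computations---so quoting those normal forms for Krein-definiteness would be legitimate after a reordering. Second, your alternative suggestion to read Krein-definiteness ``directly from the positivity \eqref{eqn:mono.barA}'' is the one step that is not yet a proof: the monotonicity is a statement about eigenvalues of the operator $\bar\cA(\alpha,e)$, and passing from that to the Krein type of the $\U$-eigenvalues of $\xi_{\alpha,e}(2\pi)$ requires exactly the splitting-number/index-jump bookkeeping that the paper carries out. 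If you want to avoid the normal forms entirely, the cleanest way is to mimic the paper's index argument rather than to invoke Krein theory as a black box.
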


\begin{proof}
    We prove this proposition by contradiction. Suppose that $\aa_k(e)$ is not continuous in $e$. There must exist some $\hat{e}$ and a sequence $\{e_i\}_{i=1}^{\infty} \subset [0,1)\bs\{ \hat{e}\}$ and $\aa_0 \in [0, 3]$ such that
    $\aa_k(e_i) \to \aa_0 \neq \aa_k(\hat{e})$, if $e_i \to \hat{e}$ as $i \to \infty$.
    We discuss the two cases of the continuity according to the sign of $\aa_0 -\aa_k(\hat{e})$. By the continuity of the eigenvalues of the matrix $\xi_{\aa,e}(2\pi)$ and by \eqref{eqn:def.aak}, $\sg(\xi_{{\aa}_0,\hat{e}}(2\pi))\cap \U \neq \emptyset$ holds.

    By the definition of $\aa_k(\hat{e})$ and (i) of Lemma \ref{lem:mono.ind.nul}, we must have $\aa_k(\hat{e})<\aa_0$.

    Now we suppose $\aa_k(\hat{e})<\aa_0$. By the continuity of $\aa_s(e)$ and the definition of $\aa_0$,
    \begin{align}
      \aa_k(\hat{e}) < \aa_0 \leq \aa_s(\hat{e}).
    \end{align}
    By the definition of $\aa_k(\hat{e})$, let $\om_0\in \sg(\xi_{\aa_k(\hat{e}),\hat{e}}(2\pi))\cap \U$.
    Let $L=\{(\hat{\aa},\hat{e}) | \aa \in [0,\aa_{k}(\hat{e}))\}$, $V=\{(0, e)| e \in[0,1)\}$, and $L_i =\{(\aa ,e_i)|\aa\in [0,\aa_k(e_i)]\}$.
    \begin{align}
      i_{\omega_{0}}\left(\xi_{\aa,e}\right)=
      \nu_{\omega_{0}}\left(\xi_{\aa, e}\right)=0,
      \quad \forall(\aa,e) \in L \cup V \cup \bigcup_{i \geq 1} L_{i}.
    \end{align}
    In particular, we have
    $i_{\omega_{0}}\left(\xi_{\aa_{k}(\hat{e}), \hat{e}}\right)=0$ and $\nu_{\omega_{0}}\left(\xi_{\aa_{k}(\hat{e}), \hat{e}}\right) \geq 1$.
    Therefore, by the definition of $\om_0$, there exists $\hat \aa \in (\aa_k(\hat{e}),\aa_0)$ sufficiently close to $\aa_k (\hat{e})$ such that
    \begin{align}
      i_{\omega_{0}}\left(\xi_{\hat{\aa}, \hat{e}}\right)
      =i_{\omega_{0}}\left(\xi_{\aa_{k}(\hat{e}), \hat{e}}\right)+\nu_{\omega_{0}}\left(\xi_{\aa_{k}(\hat{e}), \hat{e}}(2 \pi)\right) \lb{eqn:accol}
      \geq 1.
    \end{align}
    Note that \eqref{eqn:accol} holds for all $\aa \in (\aa_k(e), \aa_0]$. Also $(\aa, \hat{e})$ is an accumulation point of $\cup_{i\geq 1} L_i$. This yields there exists $(\aa_i,e_i) \in L_i$ such that $\xi_{\aa_i, e_i}$ is $\om_0$-degenerate.
    Moreover $\aa_i\to \aa_0$ and $\xi_{\aa_i,e_i}(2\pi) \to \xi_{\aa_0,\hat e}(2\pi)$ as $i\to \infty$.
    Then we have following contradiction for $i \geq 1$ large enough
    \begin{align}
      1 \leq  i_{\omega_{0}}(\xi_{\aa_0, \hat{e}}) \leq
       i_{\omega_{0}}(\xi_{\aa_i(e), e_i}) = 0.
    \end{align}
    Then we have the continuity of $\aa_k(e)$ in $\aa$ and $e$.
\end{proof}

\begin{proposition}    
The region $\Gamma_{s}, \Gamma_{m}$ and $\Gamma_{k}$ form three curves which possess the following properties.
\begin{enumerate}[label = (\roman*)]
       \item We have
       \begin{align}
       i_{-1}\left(\xi_{\alpha, e}\right)=
       \begin{dcases}
       0, \text { if } 0 \leq \alpha  \leq \alpha_{s}(e), \\
       1, \text { if } \alpha_{s}(e) < \alpha \leq \alpha_{m}(e), \\
       2, \text { if } \alpha_{m}(e) < \alpha \leq 3,
       \end{dcases}
       \end{align}
       and both $\Gamma_{s}$ and $\Gamma_{m}$ are precisely the $-1$ degeneracy curves of the matrix $\xi_{\alpha, e}(2 \pi)$ in the $(\alpha, e)$ rectangle $[0,3] \times[0,1)$.
       \item There holds $\alpha_{k}(e) \leq \alpha_{s}(e) \leq \alpha_{m}(e)<3$ for all $e \in[0,1)$.
       \item Every matrix $\xi_{\alpha, e}(2 \pi)$ is hyperbolic when $\alpha \in\left(0,\alpha_{k}(e)\right]$ and $e \in[0,1)$, and there holds
       \begin{align*}
       \alpha_{k}(e)=\sup \left\{\alpha \in[0,3] \mid \sigma\left(\xi_{\alpha, e}(2 \pi)\right) \cap \mathbf{U}=\emptyset\right\}, \quad \forall e \in[0,1).
       \end{align*}
       Consequently $\Gamma_{k}$ is the boundary curve of the hyperbolic region of $\xi_{\alpha, e}(2 \pi)$ in the $(\alpha, e)$ rectangle $[0,3] \times[0,1)$.
\end{enumerate}
\end{proposition}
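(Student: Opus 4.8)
The plan is to read off all three assertions from the monotonicity of the $-1$-index already established, combined with the boundary index computations and the connectedness of the hyperbolic region. I would treat parts (i), (ii), (iii) in that order, since each feeds the next, and throughout I would fix $e\in[0,1)$ and regard everything as a function of $\alpha\in[0,3]$.

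For (i) I would start from Proposition \ref{prop:index}, which gives that $\alpha\mapsto i_{-1}(\xi_{\alpha,e})$ is non-decreasing on $[0,3]$ and rises from $i_{-1}(\xi_{0,e})=0$ to $i_{-1}(\xi_{3,e})=2$. Since a non-decreasing index can jump only upward and only at degeneracy parameters, and since the strict crossing relation $\frac{d}{d\alpha}\lambda_\alpha|_{\alpha=\alpha_0}>0$ of Proposition \ref{prop:index} forces the jump at each such parameter to equal its $-1$-nullity, the total rise of $2$ is distributed exactly over the $-1$-degenerate parameters counted with multiplicity. Lemma \ref{lem:mono.ind.nul}(iii) pins this total $-1$-nullity to be $2$, so the degenerate set is either two simple points or one double point; in either case these are precisely $\alpha_s(e)=\min\{\alpha_1(e),\alpha_2(e)\}$ and $\alpha_m(e)=\max\{\alpha_1(e),\alpha_2(e)\}$ of \eqref{eqn:aa.sm}. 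Tracking the left-continuous index convention then yields the three-valued formula, and the identification of $\Gamma_s,\Gamma_m$ as the $-1$-degeneracy curves is immediate.

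For (ii), the inequality $\alpha_s(e)\le\alpha_m(e)$ holds by definition. The bound $\alpha_m(e)<3$ follows from the boundary computation $\nu_{-1}(\xi_{3,e})=0$ in Proposition \ref{prop:index}: since $\xi_{3,e}(2\pi)$ is $-1$-nondegenerate, $\alpha=3$ cannot be a $-1$-degeneracy parameter, so the largest degeneracy value lies strictly below $3$. For $\alpha_k(e)\le\alpha_s(e)$, I would observe that at $\alpha=\alpha_s(e)$ the matrix $\xi_{\alpha_s(e),e}(2\pi)$ carries $-1\in\U$, hence is non-hyperbolic; as $\alpha=0$ is hyperbolic (the case $\nu_\omega=0$ for all $\omega\in\U$ in Proposition \ref{prop:index}) and the hyperbolic region is connected with upper boundary $\alpha_k(e)$ by Lemma \ref{lem:mono.ind.nul}(i)--(ii), the first non-hyperbolic parameter occurs no later than $\alpha_s(e)$, so $\alpha_k(e)\le\alpha_s(e)$.

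For (iii), the hyperbolicity on $(0,\alpha_k(e)]$ together with the supremum characterization is essentially the content of Lemma \ref{lem:mono.ind.nul}(i)--(ii): the hyperbolic region is connected, contains a neighborhood of $\alpha=0$, and has $\alpha_k(e)$ as its boundary, whence $\alpha_k(e)=\sup\{\alpha\in[0,3]\mid\sigma(\xi_{\alpha,e}(2\pi))\cap\U=\emptyset\}$. The step requiring the most care is the endpoint behavior at $\alpha_k(e)$: because hyperbolicity is an open condition, one must argue carefully whether $\xi_{\alpha_k(e),e}(2\pi)$ is itself hyperbolic or only lies on the boundary, and reconcile this with the closed interval in the statement. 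I expect this boundary bookkeeping, rather than any new analytic input, to be the only genuinely delicate point, and I would resolve it by the same limiting argument used in Lemma \ref{lem:mono.ind.nul}(ii) together with the continuity of $\alpha_k(e)$ in $e$ established above.
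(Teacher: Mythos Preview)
Your proposal is correct and follows essentially the same route as the paper: both arguments derive (i) from the monotonicity of $i_{-1}$ in Proposition~\ref{prop:index} together with the definitions \eqref{eqn:aa.sm}--\eqref{eqn:gas}, obtain (ii) from the boundary data $\nu_{-1}(\xi_{3,e})=0$ and the fact that $\xi_{\alpha_s(e),e}(2\pi)$ has $-1$ in its spectrum, and read (iii) directly off Lemma~\ref{lem:mono.ind.nul}(i)--(ii). Your discussion is in fact more detailed than the paper's, which dispatches (ii) and (iii) in a single line by appeal to Lemma~\ref{lem:mono.ind.nul}; your caution about the endpoint $\alpha=\alpha_k(e)$ is well placed, since hyperbolicity is open and the closed interval in the statement sits somewhat uneasily with the limiting argument of Lemma~\ref{lem:mono.ind.nul}(ii), but this is a bookkeeping issue the paper itself does not resolve more carefully.
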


\begin{proof}
	For $0\leq e <1 $, by the definitions of $\aa_s(\aa,e)$ and $\aa_m(e)$ satisfying  for $\aa > 0$ and $e \in[0,1)$ in \eqref{eqn:gas},
	we have that $-1$-index stays the same and only changes when $(\aa,e) \in \Gamma_{m}\cup \Gamma_{s}$.

       Note that if $\aa  = 0$, $\sigma\left(\xi_{\aa, e}(2 \pi)\right) \cap \mathbf{U} = \emptyset$. Therefore for any $e\in[0, 1)$, $\aa_{k}(e)$ is well defined.
By the definition of \eqref{eqn:gas} and \eqref{eqn:def.aak}, we have that for given $e_0\in[0,1)$,  if $\aa_s(e_0)$ exists,  we must have $ \aa_k(e_0) \leq \aa_s(e_0) \leq \aa_m(e_0)$.
       By Lemma \ref{lem:mono.ind.nul}, (ii) and (iii) of this propositon can be obtianed directly.
	Then we have this proposition holds.
\end{proof}

\begin{theorem}\lb{thm:RE.norm.form}
\begin{enumerate}[label = (\roman*)]
       \item 
       We have $\xi_{\alpha, e}(2 \pi) \approx R\left(\theta_{1}\right) \diamond R\left(\theta_{2}\right)$ for some $\theta_{1}$ and $\theta_{2} \in(\pi, 2 \pi)$, and thus it is strongly linearly stable on the segment $\alpha_{m}(e)<\alpha<3$; 
       \item We have $\left.\xi_{\alpha, e}(2 \pi) \approx D(\lambda) \diamond R(\theta)\right)$ for some $0>\lambda \neq-1$ and $\theta \in(\pi, 2 \pi)$ and it is elliptic-hyperbolic, and thus linearly unstable on the segment $\alpha_{s}(e)<\alpha<\alpha_{m}(e)$.
       \item  We have $\xi_{\alpha, e}(2 \pi) \approx R\left(\theta_{1}\right) \diamond R\left(\theta_{2}\right)$ for some $\theta_{1} \in(0, \pi)$ and $\theta_{2} \in(\pi, 2 \pi)$ with $2 \pi-\theta_{2}<\theta_{1}$, and thus it is strongly linearly stable on the segment $\alpha_{k}(e)<\alpha<\aa_s(e)$.
\end{enumerate}
\end{theorem}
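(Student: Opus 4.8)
The plan is to read off the normal form of $\xi_{\alpha,e}(2\pi)$ on each of the three $\alpha$-segments from the $\omega$-Maslov index data already assembled, and then invoke the $\omega$-index theory of \cite{Lon4} to convert index information into a symplectic normal form. The key bridge is that for a path $\xi$ in $\mathrm{Sp}(4)$ of the form arising here, knowing the $\omega$-index $i_\omega(\xi_{\alpha,e})$ and nullity $\nu_\omega(\xi_{\alpha,e})$ for all $\omega\in\mathbf{U}$ determines the location of the eigenvalues of $\xi_{\alpha,e}(2\pi)$ on $\mathbf{U}$, and in particular pins down whether the end-matrix is a product of two rotations $R(\theta_1)\diamond R(\theta_2)$ (elliptic--elliptic), a rotation times a hyperbolic block $D(\lambda)\diamond R(\theta)$ (elliptic--hyperbolic), and the precise ranges of $\theta_1,\theta_2\in(0,2\pi)$.

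First I would fix $e\in[0,1)$ and work segment by segment. On $\alpha_m(e)<\alpha<3$ the preceding proposition gives $i_{-1}(\xi_{\alpha,e})=2$, and by Lemma~\ref{lem:mono.ind.nul}(iii) together with Corollary~\ref{coro:null.sep} all $\omega$-degeneracies with $\omega\in\mathbf{U}\setminus\{1\}$ have already been exhausted below $\alpha_m(e)$, so $\nu_\omega(\xi_{\alpha,e})=0$ for every $\omega\neq 1$ on this open segment. The absence of any further $\omega$-degeneracy forces both pairs of eigenvalues to stay strictly on $\mathbf{U}$ away from $\pm 1$, i.e.\ $\xi_{\alpha,e}(2\pi)\approx R(\theta_1)\diamond R(\theta_2)$; the fact that $i_{-1}=2$ (as opposed to the $\alpha=3$ endpoint behavior in Proposition~\ref{prop:index}(ii)) places both rotation angles in $(\pi,2\pi)$. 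This is strong linear stability. On the middle segment $\alpha_s(e)<\alpha<\alpha_m(e)$ one has $i_{-1}=1$: exactly one of the two index jumps has occurred, so precisely one pair of eigenvalues has left $\mathbf{U}$ (giving the hyperbolic block $D(\lambda)$ with $\lambda<0$, $\lambda\neq-1$), while the other pair remains elliptic with angle in $(\pi,2\pi)$; this is the elliptic--hyperbolic case, hence linear instability. On the left segment $\alpha_k(e)<\alpha<\alpha_s(e)$, by definition of $\alpha_k(e)$ the matrix is no longer hyperbolic, yet $i_{-1}=0$, so both pairs are back on $\mathbf{U}$ but the $-1$-index has not yet jumped; reading off the signatures gives one angle in $(0,\pi)$ and one in $(\pi,2\pi)$, with the $\omega$-index bookkeeping on the arc between them yielding the stated inequality $2\pi-\theta_2<\theta_1$.

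The main obstacle is the precise conversion of the numerical index/nullity data into the exact normal form \emph{with the correct angle ranges}, rather than merely into spectral stability. Spectral stability (eigenvalues on $\mathbf{U}$) follows almost immediately from the nullity count, but showing that the end-matrix is genuinely semisimple (so that linear, not merely spectral, stability holds) requires ruling out the degenerate Jordan forms $R(\theta)\diamond N$ with a nontrivial $\omega$-Krein block, and this is exactly where I would lean on the hyperbolicity and non-degeneracy established in Lemma~\ref{lem:mono.ind.nul} together with the strict monotonicity \eqref{eqn:mono.barA}: on each open segment the operator $\bar{\cA}(\alpha,e)$ is $\omega$-non-degenerate for all $\omega\in\mathbf{U}$ simultaneously, which is precisely the condition that excludes such Jordan blocks. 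The angle inequalities $\theta_1,\theta_2\in(\pi,2\pi)$ in (i), $\theta\in(\pi,2\pi)$ in (ii), and $2\pi-\theta_2<\theta_1$ in (iii) are then extracted by tracking the Maslov-type index as $\omega$ traverses the upper unit semicircle and comparing with the splitting numbers at $\omega=\pm1$; I would carry this out using the standard $\omega$-index iteration formulas of \cite{Lon4}, which is the most computation-heavy but conceptually routine part of the argument.
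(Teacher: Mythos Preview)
Your overall strategy is the paper's strategy, and the closing remark about ``tracking the Maslov-type index as $\omega$ traverses the upper unit semicircle and comparing with the splitting numbers'' is exactly the mechanism used. But the argument you actually sketch for (i) contains a genuine error. You assert that on $(\alpha_m(e),3)$ one has $\nu_\omega(\xi_{\alpha,e})=0$ for every $\omega\neq 1$, and then that ``the absence of any further $\omega$-degeneracy forces both pairs of eigenvalues to stay strictly on $\mathbf{U}$''. This is backwards: $\nu_\omega(M)=0$ for all $\omega\in\mathbf{U}$ is precisely the statement that $M$ is \emph{hyperbolic}. If the desired conclusion $M\approx R(\theta_1)\diamond R(\theta_2)$ holds, then $\nu_{e^{\sqrt{-1}\theta_j}}(M)\ge1$, so your premise is false. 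Neither Lemma~\ref{lem:mono.ind.nul}(iii) nor Corollary~\ref{coro:null.sep} gives such vanishing: the lemma bounds $\sum_\alpha\nu_\omega$ by $2$ for each fixed $\omega$, and the corollary concerns only $\omega=-1$. The same gap recurs in (iii): ``not hyperbolic'' yields only that \emph{at least one} eigenvalue pair lies on $\mathbf{U}$; that both do requires ruling out $D(\lambda)\diamond R(\theta)$, which the paper does by a splitting-number contradiction (it would give $i_{-1}=\pm1\neq0$).

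The paper bypasses any non-degeneracy argument and works directly with the identity \eqref{eqn:split}. From $i_1(\xi_{\alpha,e})=0$ and $S_M^{\pm}(\pm1)=0$ on each open segment one gets $i_{-1}=\sum_{\omega_j}\bigl(S_M^+(\omega_j)-S_M^-(\omega_j)\bigr)$, summed over eigenvalues in the open upper arc. Combined with $\sum(S^++S^-)\le2$ and the list of splitting numbers for basic normal forms, this forces: in (i), two rotation blocks each contributing $+1$ (so $\theta_1,\theta_2\in(\pi,2\pi)$); in (ii), one such rotation and one $D(\lambda)$; in (iii), two rotations contributing $+1$ and $-1$, and the inequality $2\pi-\theta_2<\theta_1$ comes from a second application of \eqref{eqn:split} at $\omega=e^{\sqrt{-1}\theta_1}$ using $i_\omega(\xi_{\alpha,e})\ge0$ together with the total-multiplicity bound of Lemma~\ref{lem:mono.ind.nul}(iii). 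This computation is the whole proof, not a coda; it should replace, not follow, the flawed non-degeneracy step.
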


\begin{proof}
    (i) By Lemma \ref{lem:Bott}, the index and nullity of $2$nd-iteration of the symplectic path $\xi_{\aa,e}(t)$ satisfies
    \begin{align}
      i_{1}(\xi_{\aa,e}^{2}) &= i_{-1}\left(\xi_{\aa,e}\right)+i_{1}(\xi_{\aa,e}),\nn\\
      \nu_{1}\left(\xi_{\aa,e}^{2}\right)&=\nu_{1}\left(\xi_{\aa,e}\right)+ \nu_{-1}\left(\xi_{\aa,e}\right)=\nu_{-1}\left(\xi_{\aa,e}\right),
    \end{align}
    where $\nu_{1}(\xi_{\aa,e}) = 0$ if $(\aa,e)\in [0,3] \times[0,1) $. Therefore, the 2-iteration of the index is given by
    \begin{align}
      i_1(\xi^2_{\aa,e})=\left\{
      \begin{array}{ll}
          {2,} & {\text { if } (\aa,e) \in \Gamma_m}; \\
          {1,} & {\text { if } (\aa,e) \in \Gamma_s}; \\
          {0,} & {\text { if } (\aa,e) \in \Gamma_k }.
      \end{array}\right.
    \end{align}
    Follow the discussion in the proof of Theorem 1.2 of \cite{HS}, if $(\aa, e)$ satisfies $\aa \neq \aa_{m}(e)$ and $\aa \neq \aa_{s}(e)$, the matrix  $\xi_{\aa,e}(4\pi) =
    \xi_{\aa,e}^{2}(2\pi)$ is non-degenerate with respect with eigenvalue $1$.

    For (i), suppose  $\omega_{i}=e^{\sqrt{-1 \theta_{i}}} \in \sigma\left(\xi_{\aa,e}(2\pi) \cap \mathbf{U}\right)$ with $\th_i \in(0,\pi)$. Note that $i_{-1}(\xi_{\aa,e}) = 2$, and by \eqref{eqn:split},
    \begin{align}
      i_{-1}(\xi_{\aa,e}) &= i_{1}(\xi_{\aa,e})
      +\sum_{\omega_{i}}(S_{\xi_{\aa,e}(2\pi)}^{+} (\omega_{i} )-S_{\xi_{\aa,e}(2\pi)}^{-} (\omega_{i}))+ S_{\xi_{\aa,e}(2\pi)}^{+}(-1) \nn\\
      &=\sum_{\omega_{i}} (S_{\xi_{\aa,e}(2\pi)}^{+} (\omega_{i} )-S_{\xi_{\aa,e}(2\pi)}^{-} (\omega_{i})). \lb{eqn:-1.split}
    \end{align}
    It yields that
    \begin{align}
      2=\sum_{\omega_{i}}\left(S_{\xi_{\aa,e}(2\pi)}^{+}(\omega_{i})-S_{\xi_{\aa,e}(2\pi)}^{-}(\omega_{i})\right) \leq \sum_{\omega_{i}}\left(S_{\xi_{\aa,e}(2\pi)}^{+}(\omega_{i})+S_{\xi_{\aa,e}(2\pi)}^{-}(\omega_{i})\right) \leq 2.
    \end{align}
     Then we have that $S_{\xi_{\aa,e}}^{-}(\omega_{i}) =0$, by the list of splitting number in Section \ref{sec:Prelim}.
     Therefore, there exist the two $\om_1$ and $\om_2$ such that $S_{\xi_{\aa,e}}^{+}(\omega_{i}) =1$.
     Then we have (i) of Theorem \ref{thm:RE.norm.form} holds.

     (ii) Note that $i_1(\xi^2_{\aa,e}) = 1$ implies that $i_{-1}(\xi_{\aa,e}) = 1$.
     Therefore, still by \eqref{eqn:-1.split}, there exists exact one eigenvalue $\omega=e^{\sqrt{-1 \theta_{i}}} \in \sigma\left(\xi_{\aa,e}(2\pi) \cap \mathbf{U}\right)$ for $\th \in (0,\pi)$ with the splitting number $(1,0)$.
     By the splitting number the list of splitting number in Section \ref{sec:Prelim}, we must have $\xi_{\aa,e}(2\pi) \approx D(\lm) \diamond R(\th)$.
     Also note that $i_1(\xi_{\aa,e}) = 0$ implies $\lm < 0$.
     Then we have (ii) of Theorem \ref{thm:RE.norm.form} holds.

     (iii)
 	  Suppose that  $(\aa_{0},e_{0})\in \Gamma_k$.
 	  By (i) and (ii) of Lemma \ref{lem:mono.ind.nul}, the matrix $M\equiv\xi_{\aa_{0},e_{0}}(2\pi)$ is not hyperbolic with at least one pair of on the unit circle $\U$.
 	  Furthermore, by the definition of $\aa_k(e_{0})$ and $\aa_s(e_0)$, $\pm 1 \notin \sg(\xi_{\aa_{0},e_{0}}(2\pi))$.
 	  Suppose that $ \sigma(\xi_{\aa_{0},e_{0}}(2\pi))= \{\lambda_{1} , \lambda_{1}^{-1}, \lambda_{2}, \lambda_{2}^{-1}\}$,
 	with $\lambda_{1}\in \mathbf{U} \backslash \mathbf{R}$
 	and $\lambda_{2}\in(\mathbf{U} \cup \mathbf{R}) \backslash\{ \pm 1,0\}$. We must have $\lambda_{2}\in \mathbf{U} \bs \mathbf{R}$ and
 	$\xi_{\aa_0, e_{0}}(2\pi) \approx R(\th_1) \diamond R(\th_2)$ for $\th_1$, $\th_2 \in (0,\pi) \cup (\pi ,2\pi)$.

 	If not, $\lambda_{2}\in \R\bs\{\pm 1, 0\}$. The normal form is given by  $\xi_{\aa_0,e_0}(2\pi) \approx D(\lm) \diamond R(\th)$ for some $\th \in (0,\pi) \cup (\pi,2\pi)$. Then by \eqref{eqn:split}, we obtain following contradiction.
  \begin{align}
    0 &=i_{-1}\left(\xi_{\aa_0, e_{0}}\right) \nn\\
    &=i_{1}\left(\xi_{\aa_0, e_{0}}\right)+S_{M}^{+}(1)-S_{M}^{-}\left(e^{ \pm \sqrt{-1} \theta}\right)+S_{M}^{+}\left(e^{ \pm \sqrt{-1} \theta}\right)-S_{M}^{-}(-1) \nn\\
   	&=0+0-S_{R(\theta)}^{-}\left(e^{ \pm \sqrt{-1} \theta}\right)+S_{R(\theta)}^{+}\left(e^{ \pm \sqrt{-1} \theta}\right)-0 \nn\\
   	&=\pm 1. \lb{eqn:lm2.split}
  \end{align}
 	Again as \eqref{eqn:lm2.split}, we have that
  \begin{align}
    0&= i_{-1}\left(\xi_{\aa_0, e_{0}}\right) \nn\\
   	&= i_{1}\left(\xi_{\aa_0, e_{0}} \right)+S_{M}^{+}(1)-S_{R\left(\theta_{1}\right)}^{-}\left(e^{ \pm \sqrt{-1} \theta_{1}}\right)+S_{R\left(\theta_{1}\right)}^{+}\left(e^{ \pm \sqrt{-1} \theta_{1}}\right) \\
    &\quad -S_{R\left(\theta_{2}\right)}^{-}\left(e^{ \pm \sqrt{-1} \theta_{2}}\right)+S_{R\left(\theta_{1}\right)}^{+}\left(e^{ \pm \sqrt{-1} \theta_{2}}\right)-S_{M}^{-}(-1) \nn\\
   	&=-S_{R\left(\theta_{1}\right)}^{-}\left(e^{ \pm \sqrt{-1} \theta_{1}}\right)+S_{R\left(\theta_{1}\right)}^{+}\left(e^{ \pm \sqrt{-1} \theta_{1}}\right)-S_{R\left(\theta_{2}\right)}^{-}\left(e^{ \pm \sqrt{-1} \theta_{2}}\right)+S_{R\left(\theta_{2}\right)}^{+}\left(e^{ \pm \sqrt{-1} \theta_{2} }\right).
  \end{align}
 	Note that if $\th_1$ and $\th_2$ locate at the same interval $(0,\pi)$ or $(\pi, 2\pi)$, the right hand side will be $\pm 2$. Thus, we must have that  $\th_1 \in (0,\pi)$ and $\th_2\in (\pi, 2\pi)$.

 	If $\th_1 = 2\pi -\th_2$, following equation holds.
  \begin{align}
    \sum_{\aa_k < \aa \leq  3} \nu_{\omega}(\xi_{\aa_0, e_{0}})
   	\geq \sum_{ \aa_{s}\leq \bb \leq 3} 	\nu_{\omega}(\xi_{\aa_0, e_{0}})
   	+\nu_{\omega}(\xi_{\aa_0, e_{0}}) \geq 1+2,
  \end{align}
 	where $\om = \exp(\sqrt{-1}\th_1)$.
 	This contradicts to (iii) of Lemma \ref{lem:mono.ind.nul}.

 	If $2\pi -\th_2 > \th_1$, for $\om =\exp(\sqrt{-1}\th_1)$, we have that
  \begin{align}
    0 \leq i_{\omega}(\xi_{\aa_0, e_{0}})
   	=i_{1}(\xi_{\aa_0, e_{0}})+S_{M}^{+}(1)-S_{R(\theta_{1})}^{-}(e^{\sqrt{-1} \theta_{1}})
   	=-S_{R(\theta_{1})}^{-}(e^{\sqrt{-1} \theta_{1}})=-1,
  \end{align}
 	where $M = \xi_{\aa_0, e_{0}}(2\pi)$.
 	This contradiction yields that $2\pi -\th_2 <\th_1$.
 	Then we have (ii) of Theorem \ref{thm:RE.norm.form} holds.
\end{proof}

\begin{lemma}\lb{lem:boundary}
    For some $(\aa_0,e) \in [0,3] \times[0,1) $, if $\xi_{\aa_0,e}(2\pi) \approx M_2(-1,c)$ which is given by \eqref{eqn:m2} for $c_1$, $c_2\in \R$, or it possesses the
    basic normal form $N_1(-1,a) \diamond N_1(-1,b)$ for $a$, $b \in \R$, then $\xi_{\aa,e}(2\pi)$ is hyperbolic for all $\aa \in [0,\aa_0)$.
\end{lemma}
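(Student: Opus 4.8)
The plan is to prove directly that $\sg(\xi_{\aa,e}(2\pi))\cap\U=\emptyset$ for every $\aa\in[0,\aa_0)$, which is exactly hyperbolicity. The key structural input is that, whether $\xi_{\aa_0,e}(2\pi)\approx M_2(-1,c)$ or $\approx N_1(-1,a)\diamond N_1(-1,b)$, the end matrix $M_0:=\xi_{\aa_0,e}(2\pi)$ has spectrum $\sg(M_0)=\{-1\}$ only, with $-1$ of geometric multiplicity two, so that $\nu_{-1}(\xi_{\aa_0,e})=2$. First I would dispose of the eigenvalues $\pm1$: by Lemma \ref{lem:mono.ind.nul}(iii) the total $-1$-nullity over $\aa\in[0,3]$ equals $2$, so the degeneracy at $\aa_0$ exhausts the budget and $\nu_{-1}(\xi_{\aa,e})=0$ for all $\aa\neq\aa_0$; in particular $-1\notin\sg(\xi_{\aa,e}(2\pi))$ when $\aa<\aa_0$. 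Since $\nu_{1}(\xi_{\aa,e})=0$ for all $(\aa,e)$, it remains only to rule out eigenvalues $\om\in\U\bs\{\pm1\}$.

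Next I would compute the $\om$-index of $\xi_{\aa_0,e}$ for $\om\neq-1$. For such $\om$ we have $\om\notin\sg(M_0)$, so the $\om$-index cannot jump as $\om$ ranges over the connected punctured circle $\U\bs\{-1\}$ (the jumps of $i_\om$ across $\om$ are precisely the splitting numbers recalled in Section \ref{sec:Prelim}, which vanish off the spectrum of $M_0$). Evaluating at $\om=1$, where $i_1(\xi_{\aa_0,e})=0$ by Proposition \ref{prop:index}, gives
\begin{align}
i_{\om}(\xi_{\aa_0,e})=0\qquad\text{for all }\om\in\U\bs\{-1\}.
\end{align}
By the monotonicity of the $\om$-index in $\aa$ (Lemma \ref{lem:mono.ind.nul}(iii) and \eqref{eqn:mono.barA}), $i_\om(\xi_{\aa,e})$ is non-decreasing in $\aa$, hence $i_\om(\xi_{\aa,e})=0$ for every $\aa\le\aa_0$ and every $\om\in\U\bs\{-1\}$.

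I would then finish by contradiction. Suppose $\xi_{\aa_1,e}(2\pi)$ had an eigenvalue $\om_1\in\U$ for some $\aa_1<\aa_0$; by the first paragraph $\om_1\neq\pm1$, so $\nu_{\om_1}(\xi_{\aa_1,e})\geq1$. Since the crossing is monotone by \eqref{eqn:mono.barA}, the $\om_1$-index increases by $\nu_{\om_1}(\xi_{\aa_1,e})\ge1$ as $\aa$ passes through $\aa_1$, so $i_{\om_1}(\xi_{\aa_1+\delta,e})\ge1$ for small $\delta>0$ with $\aa_1+\delta<\aa_0$. This contradicts $i_{\om_1}(\xi_{\aa_1+\delta,e})=0$ from the previous paragraph. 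Therefore no $\om\in\U$ is an eigenvalue of $\xi_{\aa,e}(2\pi)$ for any $\aa<\aa_0$, and $\xi_{\aa,e}(2\pi)$ is hyperbolic there. (Equivalently, one first proves hyperbolicity for $\aa$ slightly below $\aa_0$ and then propagates it to all of $[0,\aa_0)$ by the connectedness of the hyperbolic region, Lemma \ref{lem:mono.ind.nul}(i).)

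The step I expect to be the crux is the identity $i_\om(\xi_{\aa_0,e})=0$ for all $\om\neq-1$: this is the only place where the hypothesis on the normal form is really used, through the fact that $M_0$ carries no eigenvalue on $\U$ other than $-1$, which forces the $\om$-index to be constant across the whole punctured circle. Note that the argument is insensitive to which of the two normal forms occurs and to the values of $a,b,c$, since only $\sg(M_0)=\{-1\}$ and $\nu_{-1}(\xi_{\aa_0,e})=2$ enter; everything else is the monotonicity bookkeeping already developed for Lemma \ref{lem:mono.ind.nul} and Theorem \ref{thm:RE.norm.form}.
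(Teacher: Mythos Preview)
Your argument has a gap in the first paragraph: the claim that $\nu_{-1}(\xi_{\aa_0,e})=2$ is not valid in general. As the paper notes just after \eqref{eqn:m2}, $\dim\ker(M_2(-1,c)+I)=1$ when $c_2\neq0$, so in that case $\nu_{-1}(\xi_{\aa_0,e})=1$ and your ``budget'' argument via Lemma \ref{lem:mono.ind.nul}(iii) no longer rules out a second $-1$-degenerate value $\aa'<\aa_0$. (For the form $N_1(-1,a)\diamond N_1(-1,b)$ the nullity is always $\ge2$, so that half of the hypothesis is not the issue; the case $c_2\neq0$ genuinely arises, e.g.\ in Theorem \ref{thm:RE.lim}(v).) Since you explicitly flag $\nu_{-1}=2$ as a key input in your final paragraph, this is a real hole rather than a cosmetic one.

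The remedy is to treat $\om=-1$ on the same footing as the other $\om$'s: show $i_{-1}(\xi_{\aa_0,e})=0$ and then run your step-4 contradiction uniformly over $\U$. In fact your own computation already delivers this: having $i_\om(\xi_{\aa_0,e})=0$ for all $\om\in\U\setminus\{-1\}$, let $\om\to -1$ and use the definition of splitting numbers to obtain $0=i_{-1}(\xi_{\aa_0,e})+S^{\pm}_{M_0}(-1)\ge i_{-1}(\xi_{\aa_0,e})\ge0$, hence $i_{-1}(\xi_{\aa_0,e})=0$. This is precisely the paper's approach: it proves $i_\om(\xi_{\aa_0,e})=0$ for \emph{every} $\om\in\U$ in one stroke via \eqref{eqn:split} (the basic normal forms involved all have $S^{-}_M(\om)\ge0$ and $i_1=0$, forcing $0\le i_\om=-S^{-}_M(\om)\le0$), and then uses $\phi_\om(\bar\cA(\aa_0,e))=0$ together with the strict monotonicity of $\bar\cA$ in $\aa$ to conclude $\bar\cA(\aa,e)>0$ on every $\ol{D}(\om,2\pi)$ for $\aa<\aa_0$, whence $\nu_\om(\xi_{\aa,e})=0$ for all $\om$ and hyperbolicity. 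Apart from the $-1$-nullity slip, your strategy coincides with the paper's.
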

\begin{proof}
    Note that the basic normal form of the matrix $M_2(-1,c) $ is either $N_1(-1,a) \diamond N_1(-1, b)$ or $N_1(-1,a)\diamond D(\lm)$ for some $a$, $b\in \R$ and $0 > \lm \neq -1$. Thus for any $\om \in \U\bs\{1\}$, by \eqref{eqn:split}, we obtain
    \begin{align}
      0 \leq i_{\omega}(\xi_{\aa_{0}, e})=i_{1}(\xi_{\aa_{0}, e})+S_{M}^{+}(1)-S_{M}^{-}(\omega)=-S_{M}^{-}(\omega) \leq 0,
    \end{align}
    where $M = \xi_{\aa_{0}, e}(2\pi)$. Then we have that $i_{\omega}(\xi_{\aa_{0}, e}) = 0$ for all $\om \in \U$. Note that $\phi_{\om}(\bar \cA(\aa, e)) = i_{\omega}(\xi_{\aa_{0}, e})$ and $\nu_{\om}(\bar\cA(\aa, e)) = \nu_{\omega}(\xi_{\aa_{0}, e})$.
    Now from $\phi_{\om}(\cA(\aa, e)) = 0$ and the monotonic of eigenvalues of $\bar \cA(\aa, e)$ in Lemma \ref{lem:mono.ind.nul}, we have that $\bar\cA(\aa,e) > 0$ for all $\aa \in [0,\aa_0)$ on $D(\om, 2\pi)$. Therefore, $\nu_{\omega}(\xi_{\aa,e}) = \nu_{\om}(\bar \cA(\aa,e)) = 0$ holds for all $\aa \in [0,\aa_0)$. Then this lemma holds.
\end{proof}

\begin{theorem}\lb{thm:RE.lim}
	When $(\aa,e) \in \Gamma_s$, $\Gamma_m$ or $\Gamma_k$, the normal form and the linear stability of $\xi_{\aa,e}(2\pi)$ satisfy followings.
	\begin{enumerate}[label=(\roman*)]
		\item If $\aa_s(e) < \aa_m(e)$, we have $\xi_{\aa_{m}(e),e}(2\pi) \approx N_1(-1,1) \diamond R(\th)$ for some $\th\in (\pi,2\pi)$. Thus it is spectrally stable and linearly unstable.

		\item\lb{thm:RE.lim:k<s=m} If $\aa_k(e)<\aa_s(e) =\aa_m(e)$, we have $\xi_{\aa_{s}(e),e}(2\pi) \approx -I_2 \diamond R(\th)$ for some $\th\in (\pi,2\pi)$. Thus it is linearly stable, but not strongly linearly stable.

		\item If $\aa_k(e)<\aa_s(e) < \aa_m(e)$, we have $\xi_{\aa_{s}(e),e}(2\pi) \approx N_1(-1,-1)  \diamond R(\th)$ for some $\th\in (\pi,2\pi)$. Thus it is spectrally stable and linearly unstable.

		\item\lb{thm:RE.lim:k<s<=m} If $\aa_k(e)<\aa_s(e) \leq \aa_m(e)$, we have $\xi_{\aa_{k}(e),e}(2\pi) \approx N_2(e^{\sqrt{-1}\th},b)$ for some $\th\in (0,\pi)$ and $b= \begin{pmatrix}
		b_1 & b_2 \\ b_3 & b_4 \end{pmatrix}$
		satisfying $(b_2-b_3)\sin\th> 0$,
		that is, $N_2(e^{\sqrt{-1}\th},b)$ is
		trivial in the sense of Definition 1.8.11 in p.41 of \cite{Lon4}.
		Consequently the matrix $\xi_{\aa_{k}(e),e}(2\pi)$ is spectrally stable and linearly unstable.

		\item If $\aa_k(e) = \aa_s(e) < \aa_m(e)$,
		we have either $\xi_{\aa_{k}(e),e}(2\pi)  \approx N_1(-1,1)  \diamond D(\lm)$ for some $-1\neq \lm <0$
		and is linearly unstable; or $\xi_{\aa_{k}(e),e}(2\pi) \approx M_2(-1,c)$ with $c_1$, $c_2\in \R$ and $c_2 \neq 0$. Thus it is spectrally stable and linearly unstable.

		\item If $\aa_k(e) = \aa_s(e) = \aa_m(e)$, either $\xi_{\aa_{k}(e),e}(2\pi) \approx M_2(-1,c)$
		with $c_1$, $c_2\in \R$ with $c_2\neq 0$,
		or
		$\xi_{\aa_{k}(e),e} \approx N_1(-1, 1)\diamond N_1(-1, 1)$.
		Thus $\xi_{\aa_{k}(e),e}(2\pi)$ is spectrally stable and linearly unstable.
	\end{enumerate}
\end{theorem}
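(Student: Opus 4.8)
The plan is to handle the six configurations one at a time, in each case first reading off the complete multiplicity of $\xi_{\aa_0,e}(2\pi)$ on $\U$ from the $\om$-Maslov index, and then using the list of splitting numbers recorded in Section \ref{sec:Prelim} to upgrade this spectral information to the precise basic normal form; the linear stability is then immediate from that normal form. The common input is the Bott-type iteration identity of Lemma \ref{lem:Bott}, which---since $\nu_1(\xi_{\aa,e})\equiv0$---gives $i_1(\xi_{\aa_0,e}^2)=i_{-1}(\xi_{\aa_0,e})$ and $\nu_1(\xi_{\aa_0,e}^2)=\nu_{-1}(\xi_{\aa_0,e})$, together with the piecewise description of $i_{-1}(\xi_{\aa,e})$ obtained above, Corollary \ref{coro:null.sep}, and part (iii) of Lemma \ref{lem:mono.ind.nul} (total $\om$-nullity equal to $2$ for every $\om\in\U\setminus\{1\}$). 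These fix the eigenvalue distribution: a simple $-1$-degeneracy ($\nu_{-1}=1$) leaves one $-1$-block and a genuine $R(\th)$ factor, while a double one ($\nu_{-1}=2$, forced when $\aa_s(e)=\aa_m(e)$) makes the $-1$-eigenspace two-dimensional.

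First I would dispose of the two simple $-1$-degeneracy cases, (i) on $\Gamma_m$ with $\aa_s<\aa_m$ and (iii) on $\Gamma_s$ with $\aa_k<\aa_s<\aa_m$. In each $\nu_{-1}=1$, so the $-1$-block is $N_1(-1,1)$ or $N_1(-1,-1)$ and the surviving pair lies on $\U\setminus\R$. To fix the sign of the second entry I would combine the splitting numbers $S^\pm_{M}(-1)$ with the monotone crossing direction \eqref{eqn:mono.barA}: since $i_{-1}$ increases as $\aa$ passes the degenerate value, matching $i_{-1}(\xi_{\aa_0+\ep})-i_{-1}(\xi_{\aa_0-\ep})=\nu_{-1}(\xi_{\aa_0})$ against \eqref{eqn:split} determines $S^+_{M}(-1)$ and $S^-_{M}(-1)$, and hence selects $N_1(-1,1)$ in case (i) and $N_1(-1,-1)$ in case (iii). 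The angle $\th\in(\pi,2\pi)$ of the rotation factor follows by continuity from the strongly stable region of Theorem \ref{thm:RE.norm.form}, where both angles already lie in $(\pi,2\pi)$.

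In case (ii), where $\aa_k<\aa_s=\aa_m$, the nullity is $\nu_{-1}=2$, and I would show that the $-1$-eigenspace is semisimple---so the block is $-I_2$ rather than any nontrivial Jordan form---by ruling out $M_2(-1,c)$ and $N_1(-1,\pm1)\diamond N_1(-1,\pm1)$ via Lemma \ref{lem:boundary}: each would force $\xi_{\aa,e}(2\pi)$ to be hyperbolic for all $\aa<\aa_s$, contradicting $\aa_k<\aa_s$. With the companion $R(\th)$ ($\th\in(\pi,2\pi)$) supplied again by continuity, $-I_2\diamond R(\th)$ is linearly stable but, because $-1\in\U$ carries a full eigenspace, not strongly linearly stable.

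The remaining cases (iv), (v), (vi) live on $\Gamma_k$ and are where the real work lies. Here $\xi_{\aa,e}(2\pi)$ is hyperbolic for every $\aa<\aa_k(e)$ by the definition of $\aa_k(e)$ and Lemma \ref{lem:mono.ind.nul}, so $i_{\om}(\xi_{\aa_k,e})=0$ for all $\om\in\U$ while $\nu_{\om_0}(\xi_{\aa_k,e})\geq1$ for the colliding value $\om_0$. In (iv) ($\aa_k<\aa_s$) the collision is off $\{\pm1\}$, giving a block $N_2(e^{\sqrt{-1}\th},b)$; I would prove its triviality $(b_2-b_3)\sin\th>0$ by observing that immediately above $\aa_k$ one enters the strongly stable region of Theorem \ref{thm:RE.norm.form}(iii) with all eigenvalues on $\U$, which a nontrivial $N_2$ (ejecting eigenvalues off $\U$) cannot produce. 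In (v) ($\aa_k=\aa_s<\aa_m$) and (vi) ($\aa_k=\aa_s=\aa_m$) the collision coincides with the $-1$-degeneracy, and I would combine the nullity count ($\nu_{-1}=1$ in (v), $\nu_{-1}=2$ in (vi)) with Lemma \ref{lem:boundary} to produce the admissible lists---$N_1(-1,1)\diamond D(\lm)$ or $M_2(-1,c)$ in (v), and $M_2(-1,c)$ or $N_1(-1,1)\diamond N_1(-1,1)$ in (vi)---with $c_2\neq0$ forced by indecomposability of the $4\times4$ block, after which the asserted stability is read off directly. The genuine obstacle throughout is this fine separation of coincident normal forms: $N_1(-1,1)$ versus $N_1(-1,-1)$, $M_2(-1,c)$ versus a split $N_1\diamond N_1$ or $N_1\diamond D(\lm)$, and the sign in the $N_2$ triviality condition. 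The $\om$-index is blind to these, so one must feed in the crossing direction \eqref{eqn:mono.barA}, the hyperbolicity criterion of Lemma \ref{lem:boundary}, and continuity with the adjacent regions of Theorem \ref{thm:RE.norm.form} to settle them.
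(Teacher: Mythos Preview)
Your overall plan is sound and largely parallels the paper's: take limits from the adjacent open regions of Theorem \ref{thm:RE.norm.form}, read off $\nu_{-1}$ from Corollary \ref{coro:null.sep}, and then use the splitting-number list to pin down the basic normal form. But there is a genuine gap in (iv). Your perturbation argument---``a nontrivial $N_2$ (ejecting eigenvalues off $\U$) cannot produce'' the strongly stable region above $\aa_k$---is not a proof, and in fact the characterization is backward: it is the \emph{trivial} $N_2$ (mixed Krein type) that borders the hyperbolic region, while a nontrivial $N_2$ (definite Krein type) keeps eigenvalues on $\U$ under perturbation. More to the point, looking only at the side $\aa>\aa_k$ cannot distinguish the two cases, since both can be perturbed to a pair of rotations. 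The paper's argument is purely index-theoretic and is what you should use: hyperbolicity below $\aa_k$ forces $i_\om(\xi_{\aa_k,e})=0$ for every $\om\in\U$; inserting this into \eqref{eqn:split} with $i_1=0$ and $S^+_M(1)=0$ gives $S^-_{N_2(\om,b)}(\om)=0$, and items $\langle 6\rangle$--$\langle 7\rangle$ then force triviality. One must also separately exclude $b_2=b_3$ (which would give basic form $R(\th)\diamond R(2\pi-\th)$ and a splitting-number contradiction); this is a step you do not mention.

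A secondary issue runs through (i), (iii), (v), (vi): you identify the sign in $N_1(-1,\pm1)$ by ``matching $i_{-1}(\xi_{\aa_0+\ep})-i_{-1}(\xi_{\aa_0-\ep})=\nu_{-1}(\xi_{\aa_0})$ against \eqref{eqn:split}'', but the jump of $i_{-1}$ in the \emph{parameter} $\aa$ is not the same object as the splitting number, which records jumps of $i_\om$ in $\om$. The correct computation uses the \emph{value} $i_{-1}(\xi_{\aa_0,e})$ itself: e.g.\ at $\aa_m$ with $\aa_s<\aa_m$ one has $i_{-1}=1$, and expanding via \eqref{eqn:split} with the $R(\th)$ contribution yields $S^-_{N_1(-1,a)}(-1)=0$, hence $a=1$ by $\langle 3\rangle$--$\langle 4\rangle$; the same device fixes $a=b=1$ in (vi). Finally, in (i) and (iii) you must explicitly rule out $M_2(-1,c)$ before concluding that ``the surviving pair lies on $\U\setminus\R$''; the paper does this with Lemma \ref{lem:boundary} (an $M_2(-1,c)$ at $\aa_0$ would make everything below hyperbolic, contradicting $\aa_k<\aa_0$).
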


\begin{proof}
(i) Let $e\in [0,1)$ satisfy $\aa_s(e) < \aa_m(e)$.
Then Corollary \ref{coro:null.sep} implies $\nu_{-1}(\xi_{\aa_{m}(e),e}(2\pi)) =1$.
As the limit case of (i) and (ii) of Theorem \ref{thm:RE.norm.form},
we have the eigenvalues of matrix $\xi_{\aa_{m}(e),e}(2\pi)$ are all on $\U$ and the normal form is either $M\approx M_2(-1,c)$ for some $c_2 \neq 0$ or $M \approx N_1(-1,1)\diamond R(\th)$ for some $\th \in (\pi,2\pi)$.

By Lemma \ref{lem:boundary} and $\aa_s(e) < \aa_m(e)$, we have that  $M\approx M_2(-1,c)$ for some $c_2 \neq 0$ cannot holds.
The normal form is given by $M \approx N_1(-1,1)\diamond R(\th)$.
So $M$ is spectrally stable and linearly unstable.

(ii) Let $e\in [0,1)$  satisfy $\aa_k(e)<\aa_s(e) = \aa_m(e)$.
As the limit case of (i) and (ii) of the Theorem \ref{thm:RE.norm.form} and Lemma \ref{lem:boundary},
the normal form of the matrix $M \equiv \xi_{\aa_s(e),e}(2\pi)$ is either $M \approx N_1(-1,a) \diamond N_1(-1,b)$ for some $a$, $b \in \{-1,1\}$,
or $M \approx -I_2\diamond R(\th)$ for some $\th \in (\pi,2\pi)$.
However, the case of $M \approx N_1(-1,a) \diamond N_1(-1,b)$ is impossible by Lemma \ref{lem:boundary}.
Then we have that  $M \approx -I_2 \diamond R(\th)$ for some $\th \in (\pi,2\pi)$ and it is linear stable but not strongly linear stable.

(iii) Let $e\in [0,1)$ satisfy $\aa_k(e)<\aa_s(e) < \aa_m(e)$.
As the limiting case of Cases (ii) and (iii) of Theorem  \ref{thm:RE.norm.form}, the normal form of the matrix $M \equiv \xi_{ \aa_s(e),e}(2\pi)$
must satisfy either $M \approx N_1(-1,-1)\diamond R(\th)$ for some $\th\in (\pi,2\pi)$ or $M \approx M_2(-1,c)$ for some $c_2\neq 0$.
Here the second case is also impossible by Lemma \ref{lem:boundary}, and the conclusion of (iii) follows.

(iv)
Let $e\in [0,1)$ satisfy $\aa_k(e)<\aa_s(e) \leq \aa_m(e)$.
As the limiting case of the cases (iii) of Theorem \ref{thm:RE.norm.form},
the matrix $\xi_{\aa_k(e),e}(2\pi)$ must have Krein collision eigenvalues $\sg(M) = \{ \lm_1,\bar{\lm}_1, \lm_2,\bar{\lm}_2\}$ with $\lm_1 = \bar{\lm}_2 = e^{\sqrt{-1}\th}$ for some $\th \in (0,\pi) \cup (\pi,2\pi)$.
By Theorem \ref{prop:index} and the definition of $\aa_k(e)$,
$\pm 1$ cannot be the eigenvalue of $\xi_{\aa_k(e),e}(2\pi)$. Therefore, we must have that $M \approx N_2(\om,b)$ for $\om = e^{\sqrt{-1}\th}$ and some matrix
$b=
(\begin{smallmatrix}
b_1 & b_2\\ b_3 & b_4
\end{smallmatrix})$
which has the form of (25-27) by Theorem 1.6.11 in p. 34 of \cite{Lon4}.
Because $(I_2 \diamond -I_2)^{-1} N_2( e^{\sqrt{-1}\th},b)(I_2 \diamond -I_2) = N_2( e^{\sqrt{-1}(2\pi-\th)},\hat{b}) $ where
$\hat{b}=(
\begin{smallmatrix}
b_1 & -b_2\\ -b_3 & b_4
\end{smallmatrix})$.
We can always suppose $\th \in (0,\pi)$ without changing the fact  $M \approx N_2(\om,b)$.

Note that by \eqref{eqn:def.aak} and Lemma \ref{lem:mono.ind.nul}, we have $i_{\om} (\xi_{\aa_{k},e}) = 0$.
Suppose $b_2-b_3 =0$, by Lemma 1.9.2 in p. 43 of \cite{Lon},
$\nu_{\om} (N_2(\om,b)) =2$ and then $N_2(\om,b)$ has basic normal form $R(\th)\diamond R(2\pi-\th)$ by the study in case 4
in p. 40 of \cite{Lon4}.
Thus we have following contradiction.
\begin{align}
  0=i_{\omega}\left(\xi_{\aa_{k}(e), e}\right)=i_{1}\left(\xi_{\aa_{k}(e), e}\right)+S_{M}^{+}(1)-S_{R(\theta)}^{-}(\omega)-S_{R(\theta)}^{-}(\bar{\omega}) \leq-1.
\end{align}
  Therefore $b_2 -b_3 \neq 0$ must hold. Then we obtain
  \begin{align}
  0=i_{\omega}(\xi_{\aa,\bb_{k}(e), e})=i_{1}(\xi_{\aa,\bb_{k}(e), e})+S_{M}^{+}(1)-S_{N_{2}(\omega, b)}^{-}(\omega)=-S_{N_{2}(\omega, b)}^{-}(\omega).
\end{align}
By $\<6\>$ and $\<7\>$ in the list of splitting number in Section \ref{sec:Prelim}, we obtain that $N_2(\om, b)$ must be
trivial.
Then by Theorem 1 of \cite{ZhuLong}, the matrix $M$ is spectrally stable and is linearly unstable as claimed.

(v) Let $e\in [0,1)$ satisfy $\aa_k(e) =\aa_s(e) < \aa_m(e)$. Note that $-1$ must be an eigenvalue of $M \approx \xi_{\aa_k(e),e}(2\pi)$ with the geometric multiplicity 1 by Proposition \ref{prop:index}.
As the limit case of (ii) and (iii) of Theorem \ref{thm:RE.norm.form}, the matrix $M$ must satisfy either $M \approx M_2(-1,b)$ with $b_1$,
$b_2 \in \R$ and $b_2 \neq 0$, and thus is spectrally stable and linearly unstable;
or $M \approx N_{-1}(-1,a)\diamond D(\lm)$ for some $a\in \{-1,1\}$ and $-1\neq \lm < 0$. Then in the later case we obtain
\begin{align}
  0=i_{-1}(\xi_{\aa_{k}(e), e})=i_{1}(\xi_{\aa_{k}(e), e})+S_{M}^{+}(1)-S_{N_{1}(-1, a)}^{-}(-1)=-S_{N_{1}(-1, a)}^{-}(-1).
\end{align}
Then by $\<3\>$ and $\<4\>$ in the list of splitting number in Section \ref{sec:Prelim}, we must have $a = 1$.
Thus $M = \xi_{\aa_{k}(e), e}(2\pi)$ is hyperbolic (elliptic-hyperbolic) and linearly unstable.
Note that by the above argument, the matrix $M_2(-1, b)$ also has the basic
normal form $N_1(-1, 1)\diamond	D(\lm)$ for some $-1 \neq \lm < 0$.

(vi) Let $e\in [0,1)$ satisfy $\aa_k(e) =\aa_s(e) = \aa_m(e)$.
As the limiting case of
cases (i), (ii) and (iii) of Theorem \ref{thm:RE.norm.form}, $-1$ must be the only eigenvalue of
$M= \xi_{\aa_k(e),e}$ with $\nu_{-1}(M) =2$ by  Proposition \ref{prop:index}. Thus the matrix $M$ must satisfy $M \approx M_1(-1,c)$ with $c =0$ and $v_{-1}(M_2(-1,c)) = 2$; or $M \approx N_1(-1,\hat{a}) \diamond N_1(-1,\hat{b})$ for some $\hat{a}$ and $\hat{b} \in \{-1,1\}$. In both case $M$ possesses the basic normal form $ N_1(-1,\hat{a}) \diamond N_1(-1,\hat{b})$ for some $\hat{a}$ and $\hat{b} \in \{-1,1\}$. Thus we obtain
\begin{align}
  0 &= i_{-1}\left(\xi_{\aa_{k}(e), e}\right) \nn\\ &=i_{1}\left(\xi_{\aa_{k}(\aa,e), e}\right)+S_{M}^{+}(1)-S_{N_{1}(-1, a)}^{-}(-1)-S_{N_{1}(-1, b)}^{-}(-1) \nn\\
  &=-S_{N_{1}(-1, a)}^{-}(-1)-S_{N_{1}(-1, b)}^{-}(-1).
\end{align}
Then by $\<3\>$ and $\<4\>$ in the list of splitting number in Section \ref{sec:Prelim}, we must have $a = b = 1$ similar
to our above study for (v). Therefore it is spectrally stable and linearly unstable as claimed.
\end{proof}

For the circular case, by $e = 0$, we obtain the value of $\aa_s$ and $\aa_m$ and directly by Section 3.3 of \cite{HLS} and $\bb = 9 - \aa^2$.

\begin{corollary}\lb{coro:cir.sta}
  For the circular orbit, namely $e = 0$, we have that $\aa_s = \aa_m = \frac{\sqrt{33}}{2}$ and $\aa = 2\sqrt{2}$. 
  \begin{enumerate}[label = (\roman*)]
    \item If $0 \leq \aa < 2\sqrt{2}$, we have that $\sigma(\xi_{\aa, e}) \subset \C\bs (U\cup \R)$, and thus it is linearly unstable.
    \item If $\aa = 2\sqrt{2}$, we have that $\xi_{\aa, e} \approx N_2(\omega_0, b)$ with $\om = e^{\sqrt{-1}\sqrt{2}\pi}$ and $b = (\begin{smallmatrix}
      b_1 & b_2 \\ b_3 & b_4
    \end{smallmatrix})$ with $b_2 \neq b_3$ and thus it is linearly unstable.
    \item If $2\sqrt{2} <\aa < \frac{\sqrt{33}}{2} $, we have that $\xi_{\aa, e} \approx R\left(\theta_{1}\right) \diamond R\left(\theta_{2}\right)$ for some $\theta_{1}\in (0, \pi)$ and $\theta_{2} \in(\pi, 2 \pi)$, and thus it is linearly stable.

    \item If $\aa =\frac{\sqrt{33}}{2} $, we have that $\xi_{\aa, e} \approx -I_2 \diamond R\left(\sqrt{3}\pi\right)$ and thus it is linearly stable.
    \item If $\frac{\sqrt{33}}{2} < \aa \leq  3$, we have that $\xi_{\aa, e} \approx R\left(\theta_{1}\right) \diamond R\left(\theta_{2}\right)$ for some $\theta_{1}$ and $\theta_{2} \in(\pi, 2 \pi]$, and thus it is linearly stable.
  \end{enumerate}
  
\end{corollary}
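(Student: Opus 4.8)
The plan is to exploit the fact that the circular case $e=0$ makes the linearized system autonomous, so that stability is governed by the spectrum of a single constant matrix. Setting $e=0$ in Theorem \ref{linearized.Hamiltonian} gives $1+e\cos\th\equiv1$, so the coefficient matrix \eqref{LinearHam2} reduces to the constant $B=\left(\begin{smallmatrix} I_2 & -J_2\\ J_2 & I_2-D\end{smallmatrix}\right)$ and $\xi_{\aa,0}(t)=e^{tJB}$, $\xi_{\aa,0}(2\pi)=e^{2\pi JB}$. First I would compute the spectrum of $JB$. Writing $JB=\left(\begin{smallmatrix} -J_2 & D-I_2\\ I_2 & -J_2\end{smallmatrix}\right)$ and using that the lower-left block $I_2$ commutes with everything, the characteristic determinant collapses (Silvester's formula) to $\det\!\big(\nu^2I_2+2\nu J_2-D\big)$; diagonalizing $D=\diag(\lm_3,\lm_4)$ and invoking Proposition \ref{prop.sum.lm3.lm4} ($\lm_3+\lm_4=3$, hence $\lm_3\lm_4=\tfrac{9-\aa^2}{4}$) yields
\[
\det(JB-\nu I_4)=\nu^4+\nu^2+\tfrac{9-\aa^2}{4},\qquad \nu^2=\tfrac{-1\pm\sqrt{\aa^2-8}}{2}.
\]
This is exactly the circular characteristic equation for the elliptic Lagrangian orbit under $\bb=9-\aa^2$, so the thresholds can equivalently be extracted from Section 3.3 of \cite{HLS}.

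Next I would read off the location of the Floquet exponents as $\aa$ ranges over $[0,3]$, recalling from Proposition \ref{prop.sum.lm3.lm4} that the spanning hypothesis forces $\det D>0$, hence $\lm_4>0$ and $\aa<3$ strictly. For $\aa^2<8$ the radicand is negative, so $\nu^2\notin\R$ and the four exponents form a proper complex quadruple off both axes; thus $\sigma(\xi_{\aa,0}(2\pi))\subset\C\setminus(\U\cup\R)$, giving (i). For $8<\aa^2<9$ both values of $\nu^2$ are real and negative, so all exponents are purely imaginary and $\sigma(\xi_{\aa,0}(2\pi))\subset\U$; the case $\nu=\pm\tfrac12\sqrt{-1}$, i.e. multiplier $-1$, occurs precisely at $\aa=\tfrac{\sqrt{33}}{2}$, identifying $\aa_s=\aa_m=\tfrac{\sqrt{33}}{2}$, while the collision $\nu^2=-\tfrac12$ occurs precisely at $\aa=2\sqrt2$, identifying $\aa_k=2\sqrt2$. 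With these three values in hand, the interior cases follow by specializing the already-proved theorems: (iii) is Theorem \ref{thm:RE.norm.form}(iii) on $\aa_k<\aa<\aa_s$, (v) is Theorem \ref{thm:RE.norm.form}(i) on $\aa_m<\aa<3$, and (iv) is the degenerate case $\aa_k<\aa_s=\aa_m$ of Theorem \ref{thm:RE.lim}. Here, since at $\aa=\tfrac{\sqrt{33}}{2}$ the roots $\nu^2=-\tfrac14$ and $\nu^2=-\tfrac34$ are distinct, $JB$ has four \emph{simple} eigenvalues, so the monodromy eigenvalue $-1$ is automatically semisimple; this forces the normal form $-I_2\diamond R(\th)$ with $\th=\sqrt3\,\pi$ coming from the surviving pair $\nu=\pm\tfrac{\sqrt3}{2}\sqrt{-1}$.

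The only genuinely new work, and the main obstacle, is the boundary case (ii) at $\aa=2\sqrt2$, where $\nu^2=-\tfrac12$ is a \emph{double} root of the quartic and the two unit-circle pairs collide at $\om_0=e^{\sqrt{-1}\sqrt2\pi}$ (a Krein collision). Linear stability would follow if $JB$ were semisimple there, whereas a nontrivial Jordan block forces the normal form $N_2(\om_0,b)$ with $b_2\neq b_3$ and instability. I would settle this by a direct eigenvector computation: solving $JB\,v=\tfrac{1}{\sqrt2}\sqrt{-1}\,v$ reduces, after eliminating the momentum block, to $\ker M(\nu)$ with $M(\nu)=\nu^2I_2+2\nu J_2-D$, and at the collision $M(\tfrac{1}{\sqrt2}\sqrt{-1})$ is a nonzero singular $2\times2$ matrix, hence has a one-dimensional kernel. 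Thus the geometric multiplicity of $\nu=\tfrac{1}{\sqrt2}\sqrt{-1}$ is $1$ against algebraic multiplicity $2$, so $JB$ is defective, $\xi_{\aa,0}(2\pi)$ is not semisimple, and the normal form is $N_2(\om_0,b)$ with $b_2\neq b_3$.

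By Theorem 1 of \cite{ZhuLong} the matrix $N_2(\om_0,b)$ with $b_2\neq b_3$ is spectrally stable but linearly unstable, which establishes (ii) and completes the proof. As a consistency check, the same conclusion follows from a Krein-signature argument: since the orbit is unstable for $\aa<2\sqrt2$ and stable for $\aa>2\sqrt2$, the colliding exponents must carry opposite Krein signs, which is incompatible with a semisimple collision and therefore independently forces the nontrivial Jordan form.
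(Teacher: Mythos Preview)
Your proposal is correct and follows essentially the same route as the paper, which simply cites Section~3.3 of \cite{HLS} under the substitution $\beta=9-\alpha^2$: you make that reduction explicit by computing the characteristic polynomial $\nu^4+\nu^2+\tfrac{9-\alpha^2}{4}$ of the constant matrix $JB$ at $e=0$ and reading off the thresholds $\alpha_k=2\sqrt2$ and $\alpha_s=\alpha_m=\tfrac{\sqrt{33}}{2}$. Your direct Jordan-block verification at $\alpha=2\sqrt2$ is valid, though since $e=0$ falls under the case $\alpha_k<\alpha_s=\alpha_m$ you could equally have invoked Theorem~\ref{thm:RE.lim}(iv) for that boundary point.
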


\setcounter{equation}{0}
\section{The linear stability of the symmetric case $m_1=m_3$}\label{sec:4}
In this section, we consider the symmetric case. 
Because of the relationship between the linear stability of the restrict 4-body problem and the linear stability of the Lagrangian solutions. 
It is well-known that the linear stability of an elliptic Euler solution
of the $3$-body problem with masses $m=(m_1,m_2,m_3)\in (\R^+)^3$ is determined by the
eccentricity $e\in [0,1)$ and the mass parameter
\begin{align}\lb{1.4}
  \bb = \frac{m_1(3x^2+3x+1)+m_3x^2(x^2+3x+3)}{x^2+m_2[(x+1)^2(x^2+1)-x^2]},
\end{align}
where $x$ is the unique positive solution of the Euler quintic polynomial equation
\be (m_3+m_2)x^5+(3m_3+2m_2)x^4+(3m_3+m_2)x^3-(3m_1+m_2)x^2-(3m_1+2m_2)x-(m_1+m_2)=0, \lb{Euler.quintic.polynomial}
\ee
and the three bodies form a central configuration of $m$, which are denoted by $q_1=0$,
$q_2=(x\alpha,0)^T$ and $q_3=((1+x)\alpha,0)^T$ with $\alpha=|q_2-q_3|>0$, $x\alpha=|q_1-q_2|$ \cite{Zhou2017}.

If further assuming that $m_1 = m_3$, we have that $x=1$ is the unique positive root of \eqref{Euler.quintic.polynomial}.
Since the center of mass of the three primaries is the origin, we have the position $a_i$ can be written in $m_2$ as follows.
\begin{align}
a_1 = (-(1-m_2)^{-{1\over2}},0)^T,\quad 
a_2 = (0,0)^T,\quad 
a_3 = ((1-m_2)^{-{1\over2}},0)^T.
\end{align}
Since the center of mass is the origin,
we suppose $a_4=(0,y(1-m_2)^{-{1\over2}})^T$ for some $y\in\R^+$.
By \eqref{second.formula.of.mu}, we have
\begin{equation}\label{y}
{1-m_2\over(y^2+1)^{3\over2}} + {m_2\over y^3}={1+7m_2\over8}.
\end{equation}
Note that when $m_2=0$, \eqref{y} gives $y=\sqrt{3}$;
when $m_2=1$, \eqref{y} gives $y=1$.
If $0<y<1$, we have
\begin{equation}
{1-m_2\over(y^2+1)^{3\over2}} + {m_2\over y^3}\ge{1-m_2\over\sqrt{8}}+m_2\ge{1-m_2\over8}+m_2={1+7m_2\over8},
\end{equation}
where the equality holds if and only if $m_2=1$.
Hence we must have $y\ge1$.
Moreover, we take the derivative of $y$ and obtain that 
\begin{equation}\label{y.prime}
{\d y\over \d m_2}
=-\frac{{7\over8}+{1\over(y^2+1)^{3\over2}}-{1\over y^3}}
       {3y\Big[{1-m_2\over(y^2+1)^{5\over2}} + {m_2\over y^5}\Big]}
       <0,
\end{equation}
where $m_2\in[0,1)$.
Here we used
$$
{1\over y^3}-{1\over(y^2+1)^{3\over2}}<{1\over y^3}-{1\over (y+1)^3}
={3y^2+3y+1\over y^3(y+1)^3}={3\over y^2(y+1)^2}+{1\over y^3(y+1)^3}
<{3\over4}+{1\over8}={7\over8}.
$$
Therefore, the range of $y$ is $[1,\sqrt{3}]$,
and $y$ is strictly decreasing with respect to $m_2\in[0,1]$.
By direct computations,  we can reduce $D$, defined in \eqref{eqn:D}, to 
\begin{equation}
D=3\left(\begin{matrix}
{8(1-m_2)\over(1+7m_2)(y^2+1)^{5\over2}} & 0\\
0 & 1 - {8(1-m_2)\over(1+7m_2)(y^2+1)^{5\over2}}
\end{matrix}\right).
\end{equation}
Let
\begin{equation}\label{z}
z = {8(1-m_2)\over(1+7m_2)(y^2+1)^{5\over2}}.
\end{equation}
We then have $\lambda_3=3(1-z)$, $\lambda_4=3z$ and $\det D=9z(1-z)$. It follows that 
\begin{equation}\label{alpha}
\alpha = \lambda_3-\lambda_4=6({1\over2}-z).
\end{equation}

If $m_2=0$, then $y=\sqrt{3}$, hence \eqref{z} gives $z={1\over4}$,
and \eqref{alpha} gives $\alpha={3\over2}$;
if $m_2=1$, \eqref{z} gives $z=0$,
and \eqref{alpha} gives $\alpha=3$.
Figure \ref{fig:2} and Corollary \ref{coro:cir.sta} we have the relation of $\alpha$ and $m_2$ and we obtain the stability of the system in term of $m_2$  and Theorem \ref{thm:numerically} holds.

\appendix
\setcounter{equation}{0}
\section{Appendix}
\subsection{Preliminaries of $\om$-Maslov-type indices and $\om$-Morse indices}\lb{sec:Prelim}
Denote by $\Sp(2n)$ the symplectic group of real $2n\times 2n$ matrices.
Following \cite{Lon4}, for any $\omega\in\U=\{z\in\C\;|\;|z|=1\}$, define a real function
$D_\om(M)=(-1)^{n-1}\overline{\om}^n det(M-\om I_{2n})$ for any $M$ in the symplectic group $\Sp(2n)$.
Then we can define $\Sp(2n)_{\om}^0 = \{M\in\Sp(2n)\,|\, D_{\om}(M)=0\}$ and
$\Sp(2n)_{\om}^{\ast} = \Sp(2n)\bs \Sp(2n)_{\om}^0$. The orientation of $\Sp(2n)_{\om}^0$ at any of its point
$M$ is defined to be the positive direction $\frac{\d}{\d t}Me^{t J}|_{t=0}$ of the path $Me^{t J}$ with $t>0$ small
enough. Let $\nu_{\om}(M)=\dim_{\C}\ker_{\C}(M-\om I_{2n})$. Let
$\mathcal{P}_{2\pi}(2n) = \{\ga\in C([0,2\pi],\Sp(2n))\;|\;\ga(0)=I\}$ and
$\ga_0(t)=\diag(2-\frac{t}{2\pi}, (2-\frac{t}{2\pi})^{-1})$ for $0\le t\le 2\pi$.

Given any two $2m_k\times 2m_k$ matrices of square block form
$M_k = (\begin{smallmatrix}
A_k & B_k\\ C_k & D_k
\end{smallmatrix} )$ with $k=1, 2$,
the symplectic sum of $M_1$ and $M_2$ is defined (cf. \cite{Lon4}) by
the following $2(m_1+m_2)\times 2(m_1+m_2)$ matrix $M_1\dm M_2$:
$$
M_1\dm M_2=
\begin{pmatrix}
       A_1 &   0 & B_1 &   0\\
       0   & A_2 &   0 & B_2\\
       C_1 &   0 & D_1 &   0\\
      0   & C_2 &   0 & D_2  
\end{pmatrix}.
$$
 For any two paths $\xi_j\in\P_{\tau}(2n_j)$
 with $j=0$ and $1$, let $\xi_0\dm\xi_1(t)= \xi_0(t)\dm\xi_1(t)$ for all $t\in [0,\tau]$.
As in \cite{Lon4}, for $\lm\in\R\bs\{0\}$, $a\in\R$, $\th\in (0,\pi)\cup (\pi,2\pi)$,
$b=(\begin{smallmatrix}b_1 & b_2\\
                 b_3 & b_4\end{smallmatrix})$ with $b_i\in\R$ for $i=1, \ldots, 4$, and $c_j\in\R$
for $j=1, 2$, some normal forms are given by
\begin{align}
& D(\lm)=\begin{pmatrix}\lm & 0\\
                         0  & \lm^{-1} \end{pmatrix}, \qquad
R(\th)=\begin{pmatrix}\cos\th & -\sin\th\\
                     \sin\th  & \cos\th\end{pmatrix},  \qquad
 N_1(\lm, a)=\begin{pmatrix}\lm & a\\
                             0   & \lm\end{pmatrix},\\ 
 &  N_2(e^{\sqrt{-1}\th},b) = \begin{pmatrix}R(\th) & b\\
                                           0      & R(\th)\end{pmatrix}, \quad 
 M_2(\lm,c)=\begin{pmatrix}
\lm &   1 &       c_1 &         0 \\
0 & \lm &       c_2 & (-\lm)c_2 \\
0 &   0 &  \lm^{-1} &         0 \\
0 &   0 & -\lm^{-2} &  \lm^{-1} \end{pmatrix}. \lb{eqn:m2}\end{align}
Here $N_2(e^{\sqrt{-1}\th},b)$ is {\bf trivial} if $(b_2-b_3)\sin\th>0$, or {\bf non-trivial}
if $(b_2-b_3)\sin\th<0$, in the sense of Definition 1.8.11 on p.41 of \cite{Lon4}. Note that
by Theorem 1.5.1 on pp.24-25 and (1.4.7)-(1.4.8) on p.18 of \cite{Lon4}, when $\lm=-1$ there hold
$c_2 \not= 0$ if and only if $\dim\ker(M_2(-1,c)+I)=1$ and
$c_2 = 0$ if and only if $\dim\ker(M_2(-1,c)+I)=2$.
For more details, readers may refer Section 1.4-1.8 of \cite{Lon4}.

For any symplectic path $\xi\in \mathcal{P}_{2\pi}(2n)$ we define $\nu_\om(\ga)=\nu_\om(\xi(2\pi))$ and
$$  i_\om(\xi)=[\Sp(2n)_\om^0:\xi\ast\xi_n], \qquad {\rm if}\;\;\xi(2\pi)\not\in \Sp(2n)_{\om}^0,  $$
i.e., the usual homotopy intersection number, and the orientation of the joint path $\xi\ast\xi_n$ is
its positive time direction under homotopy with fixed end points, where the path $\xi_n(t) = (\begin{smallmatrix}
2-\frac{t}{\tau} & 0 \\ 0 &  (2-\frac{t}{\tau})^{-1}
\end{smallmatrix})$. When $\xi(2\pi)\in \Sp(2n)_{\om}^0$, the index $i_{\om}(\xi)$  follows \cite{Lon4}. The pair
$(i_{\om}(\xi), \nu_{\om}(\xi)) \in \Z\times \{0,1,\ldots,2n\}$ is called the index function of $\xi$
at $\om$. When $\nu_{\om}(\xi)=0$ or $\nu_{\om}(\xi)>0$, the path $\xi$ is called $\om$-{\it non-degenerate}
or $\om$-{\it degenerate} respectively.
For more details readers may refer to \cite{Lon4}.

For $T>0$, suppose $x$ is a critical point of the functional
$$ F(x)=\int_0^TL(t,x,\dot{x})dt,  \qquad \forall\,\, x\in W^{1,2}(\R/T\Z,\R^n), $$
where $L\in C^2((\R/T\Z)\times \R^{2n},\R)$ and satisfies the
Legendrian convexity condition $L_{p,p}(t,x,p)>0$. It is well known
that $x$ satisfies the corresponding Euler-Lagrangian
equation:
\begin{align}
  & \frac{\d}{\d t}L_p(t,x,\dot{x})-L_x(t,x,\dot{x})=0,    \label{A2.7}\\
  & x(0)=x(T),  \qquad \dot{x}(0)=\dot{x}(T).    \label{A2.8}
\end{align}

For such an extremal loop, define
\begin{align}
  P(t) = L_{p,p}(t,x(t),\dot{x}(t)),\quad
  Q(t) = L_{x,p}(t,x(t),\dot{x}(t)),\quad
  R(t) = L_{x,x}(t,x(t),\dot{x}(t)).
\end{align}
Note that $F\,''(x)=-\frac{\d}{\d t}(P\frac{\d}{\d t}+Q)+Q^T\frac{\d}{\d t}+R$.

For $\omega\in\U$, set
$
  D(\omega,T)=\{y\in W^{1,2}([0,T],\C^n)\,|\, y(T)=\omega y(0) \}.  
$
We define the $\omega$-Morse index $\phi_\omega(x)$ of $x$ to be the dimension of the
largest negative definite subspace of $ \langle F\,''(x)y_1,y_2 \rangle$, for all $y_1,y_2\in D(\omega,T)$,
where $\langle\cdot,\cdot\rangle$ is the inner product in $L^2$. For $\omega\in\U$, we
also set
\begin{align}
  \ol{D}(\omega,T)= \{y\in W^{2,2}([0,T],\C^n)\,|\, y(T)=\omega y(0), \dot{y}(T)=\om\dot{y}(0) \}.
                      \label{A.4}
\end{align}
Then $F''(x)$ is a self-adjoint operator on $L^2([0,T],\R^n)$ with domain $\ol{D}(\omega,T)$.
We also define the $\om$-nullity $\nu_{\om}(x)$ of $x$ by
$$ \nu_{\om}(x)=\dim\ker(F''(x)).  $$
Note that we only use $n=2$ in \eqref{A.4} from in this paper.

In general, for a self-adjoint linear operator $A$ on the Hilbert space $\mathscr{H}$,
we set $\nu(A)=\dim\ker(A)$ and denote by $\phi(A)$ its Morse index which is the maximum dimension
of the negative definite subspace of the symmetric form $\langle A\cdot,\cdot\rangle$. Note
that the Morse index of $A$  is equal to the total multiplicity of the negative eigenvalues
of $A$.

On the other hand, $\td{x}(t)=(\partial L/\partial\dot{x}(t),x(t))^T$ is the solution of the
corresponding Hamiltonian system of \eqref{A2.7}-\eqref{A2.8}, and its fundamental solution
$\gamma(t)$ is given by
\begin{align}
  \begin{cases}
    \dot{\gamma}(t) &= JB(t)\gamma(t), \\
       \gamma(0) &= I_{2n},
  \end{cases}
\end{align}
with
$B(t)=\left(\begin{smallmatrix} P^{-1}(t)& -P^{-1}(t)Q(t)\\
                         -Q(t)^TP^{-1}(t)& Q(t)^TP^{-1}(t)Q(t)-R(t) \end{smallmatrix}\right)$.

\begin{lemma} \lb{lem:2.1}(\cite{Lon4}, p.172)\lb{L2.3}
For the $\omega$-Morse index $\phi_\omega(x)$ and nullity $\nu_\omega(x)$ of the solution $x=x(t)$
and the $\omega$-Maslov-type index $i_\omega(\gamma)$ and nullity $\nu_\omega(\gamma)$ of the
symplectic path $\ga$ corresponding to $x$, for any $\omega\in\U$ we have $ \phi_\omega(x) = i_\omega(\gamma)$, and $\nu_\omega(x) = \nu_\omega(\gamma). $
\end{lemma}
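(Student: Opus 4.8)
The plan is to prove the two equalities separately: the nullity identity is elementary, while the index identity $\phi_\omega(x)=i_\omega(\gamma)$ is the substantive part, and I would establish it by a spectral-flow comparison.

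First I would dispatch the nullity. A variation $y$ lies in $\ker F''(x)$ exactly when it solves the Jacobi (linearized Euler--Lagrange) equation attached to \eqref{A2.7}--\eqref{A2.8} and satisfies the boundary conditions defining $\overline{D}(\omega,T)$ in \eqref{A.4}. Passing to the first-order variable $\tilde{y}(t)=(P(t)\dot{y}(t)+Q(t)y(t),\,y(t))^T$, and using that $P,Q,R$ are $T$-periodic along the extremal $x$, such $y$ are in bijection with solutions of $\dot{\tilde{y}}=JB(t)\tilde{y}$ obeying $\tilde{y}(T)=\omega\tilde{y}(0)$, equivalently $\tilde{y}(0)\in\ker(\gamma(T)-\omega I_{2n})$. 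Hence $\nu_\omega(x)=\dim_{\C}\ker(\gamma(T)-\omega I_{2n})=\nu_\omega(\gamma)$, with nothing further to check.

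For the index, the idea is to realize $\phi_\omega(x)$ and $i_\omega(\gamma)$ as the same signed count of degeneracies along a deformation. I would take the homotopy $F''_s(x)=F''(x)+(1-s)\,c\,\mathrm{Id}$, $s\in[0,1]$, with $c>0$ large enough that the reference operator $F''_0(x)$ is positive definite on $\overline{D}(\omega,T)$; then $\phi_\omega$ of the reference is $0$ and, by a direct check, the associated reference symplectic path also has $i_\omega=0$. Each $F''_s(x)$ differs from $F''(x)$ only in its zeroth-order coefficient, so it stays self-adjoint with the Legendre convexity $P(t)=L_{pp}>0$ intact; hence it is Fredholm with finite Morse index, and $\phi_\omega(x)$ equals the net number of eigenvalues of $F''_s(x)$ crossing zero as $s$ runs from $0$ to $1$, i.e. the spectral flow of the family. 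The heart of the argument is matching this spectral flow to the Maslov index: by the nullity computation, a parameter $s_0$ at which $F''_{s_0}(x)$ is $\omega$-degenerate is exactly one at which $\gamma_{s_0}(T)\in\Sp(2n)^0_\omega$, so the endpoint develops a nontrivial $\omega$-eigenvalue. A local crossing analysis then shows that the sign of the Morse crossing form, namely $\langle \tfrac{\partial}{\partial s}F''_{s_0}(x)\,\eta,\eta\rangle$ restricted to the degenerate eigenvector $\eta$, coincides with the orientation with which $\gamma_s(T)$ traverses $\Sp(2n)^0_\omega$ relative to the positive codirection $\tfrac{\d}{\d t}Me^{tJ}|_{t=0}$ used to define $i_\omega$ in Section \ref{sec:Prelim}. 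Summing the signed crossings and using that both reference endpoints have index $0$ yields $\phi_\omega(x)=i_\omega(\gamma)$.

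I expect the crossing-sign identification to be the main obstacle: one must prove that the positivity of $P(t)$ forces the Morse crossing form and the Maslov crossing form to carry the same sign at every degenerate $s_0$, so that the two spectral flows are genuinely equal and not merely equal up to orientation. A secondary technical point is making the spectral-flow bookkeeping rigorous in the $W^{2,2}$ setting; I would handle this by a Galerkin (saddle-point) reduction of the quadratic form $\langle F''(x)y,y\rangle$ to a finite-dimensional subspace capturing its negative and null spaces exactly, reducing the infinite-dimensional count to a finite one. As this identity is precisely the index theorem of \cite{Lon4}, I would ultimately invoke that reference for the complete details.
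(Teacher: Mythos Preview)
The paper does not supply a proof of this lemma at all: it is stated with the attribution ``(\cite{Lon4}, p.172)'' and immediately followed by a remark pointing to a generalization in the literature. In other words, the paper treats this as a black-box citation to Long's index theorem, and there is no in-paper argument to compare your proposal against.

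Your sketch is a reasonable outline of how such results are proved---the nullity via the Legendre transform correspondence is standard and correct, and the spectral-flow/crossing-form route for the index identity is one of the canonical approaches (another being the Galerkin/saddle-point reduction you allude to, which is in fact closer to the treatment in \cite{Lon4}). Since you yourself conclude by invoking \cite{Lon4} for the complete details, your proposal and the paper ultimately agree: both defer to that reference. No gap to flag; just be aware that for the purposes of this paper a one-line citation suffices.
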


A generalization of the above lemma to arbitrary  boundary conditions is given in \cite{HS1}.
For more information on these topics, we refer to \cite{Lon4}.
To measure the jumps between $i_\om(\ga)$
and $i_\lambda(\ga)$ with $\lambda\in\U$ near $\om$ from two sides of $\om$ in $\U$, the splitting numbers $S_M^{\pm}(\om)$ is defined by followings.
\begin{definition} (\cite{Lon4})\lb{D2.3}
For any $M\in\Sp(2n)$ and $\om\in\U$, choosing $\tau>0$ and $\ga\in\P_\tau(2n)$ with $\ga(\tau)=M$,
we define
\begin{align}
  S_M^{\pm}(\om)=\lim_{\epsilon\rightarrow0^+}\;i_{\exp(\pm\epsilon\sqrt{-1})\om}(\ga)-i_\om(\ga).
\end{align}
They are called the splitting numbers of $M$ at $\om$.
\end{definition}

For any $\om_0=e^{\sqrt{-1}\th_0}\in\U$ with $0\le\th_0<2\pi$,
the eigenvalues of $M$ on $\U$ are denote by $\om_j$ with $1\le j\le p_0$ which are
distributed counterclockwise from $1$ to $\om_0$ and located strictly between $1$ and $\om_0$.
Then we have
\begin{align}
  i_{\om_0}(\ga)=i_1(\ga)+S_M^+(1)+\sum_{j=1}^{p_0}(-S_M^-(\om_j)+S_M^+(\om_j))-S_M^-(\om_0).  \lb{eqn:split}
\end{align}
The splitting numbers have following properties.
\begin{lemma} (\cite{Lon4}, p.198)
The integer valued splitting number pair $(S_M^+(\om),S_M^-(\om))$ defined for all
$(\om,M)\in\U\times\cup_{n\ge1}\Sp(2n)$ are uniquely determined by the following axioms:

$1^{\circ}$ (Homotopy invariant) $S_M^{\pm}(\om)=S_N^{\pm}(\om)$ for all $N\in\Omega^0(M)$.

$2^{\circ}$ (Symplectic additivity) $S_{M_1\diamond M_2}^{\pm}(\om)=S_{M_1}^{\pm}(\om)+S_{M_2}^{\pm}(\om)$
for all $M_i\in\Sp(2n_i)$ with $i=1,2$.

$3^{\circ}$ (Vanishing) $S_M^{\pm}(\om)=0$ if $\om\not\in\sigma(M)$.

$4^{\circ}$ (Normality) $(S_M^+(\om),S_M^-(\om))$ coincides with the ultimate type of
$\om$ for $M$ when $M$ is any basic normal form.
\end{lemma}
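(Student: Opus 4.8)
The statement is an axiomatic uniqueness characterization, so the plan is to show that the four axioms $1^\circ$--$4^\circ$ leave no freedom: any integer-valued assignment $(M,\om)\mapsto(S_M^+(\om),S_M^-(\om))$ obeying them is forced to equal a single explicitly computable pair. The engine of the argument is the structural normal-form theory of $\Sp(2n)$. First I would recall that every $M\in\Sp(2n)$ lies in the same path-connected component $\Omega^0(M)$ as a symplectic sum $M_0 = M_1 \diamond \cdots \diamond M_k$ of basic normal forms, taken from the list $D(\lm)$, $R(\th)$, $N_1(\pm1,a)$, $N_2(e^{\sqrt{-1}\th},b)$, and $M_2(\pm1,c)$ of Section \ref{sec:Prelim}. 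Axiom $1^\circ$ (homotopy invariance) then replaces $M$ by $M_0$ without altering either splitting number, and axiom $2^\circ$ (symplectic additivity) breaks the computation into blocks: $S_{M_0}^\pm(\om) = \sum_{j=1}^k S_{M_j}^\pm(\om)$.

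With the problem localized to a single basic block $M_j$, the value is pinned down by the remaining two axioms. If $\om\notin\sigma(M_j)$, axiom $3^\circ$ (vanishing) gives $S_{M_j}^\pm(\om)=0$. If instead $\om\in\sigma(M_j)$, then $M_j$ is a basic normal form carrying $\om$ as an eigenvalue, and axiom $4^\circ$ (normality) identifies $S_{M_j}^\pm(\om)$ with the ultimate type of $\om$ for that block---a fixed entry of the tabulated splitting-number list. Hence $(S_M^+(\om),S_M^-(\om))$ is expressed as a finite sum of predetermined integers, which proves uniqueness. To close the characterization I would also verify that the analytically defined pair of Definition \ref{D2.3} does satisfy the four axioms: path-independence and $1^\circ$ from the homotopy invariance of $i_\om$; $2^\circ$ from the additivity $i_\om(\xi_0\diamond\xi_1)=i_\om(\xi_0)+i_\om(\xi_1)$ of the Maslov-type index under $\diamond$; $3^\circ$ because $\om\notin\sigma(M)$ forces $D_\lm(M)\ne0$ for $\lm$ near $\om$, so $i_\lm(\ga)$ is locally constant across $\om$ and both one-sided differences vanish; and $4^\circ$ by a direct block-by-block computation on the normal forms.

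The hard part is the structural input behind the first step, namely that $M$ can be deformed to $M_0$ through a path that stays inside $\Omega^0(M)$, so that no eigenvalue crosses $\U$ and no ultimate type changes along the way. This is exactly the homotopy classification of symplectic matrices relative to the degenerate hypersurfaces $\Sp(2n)_\om^0$, and its proof rests on a continuous-deformation analysis of the symplectic Jordan structure of $M$ both on and off the unit circle. Once that classification is granted, the decomposition into basic blocks and the axiom-by-axiom evaluation are routine, and existence together with uniqueness yields the stated result.
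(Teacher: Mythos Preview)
The paper does not actually prove this lemma: it is quoted verbatim as a citation from \cite{Lon4}, p.198, with no accompanying argument. So there is no ``paper's own proof'' to compare against.

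That said, your sketch is correct and is precisely the standard argument one finds in \cite{Lon4}. The key structural input you identify---that every $M\in\Sp(2n)$ can be connected within $\Omega^0(M)$ to a $\diamond$-sum of basic normal forms---is Theorem~1.8.10 (or its surrounding development) in \cite{Lon4}, and once that is in hand the four axioms force the value exactly as you describe. Your verification that the analytically defined splitting numbers of Definition~\ref{D2.3} satisfy $1^\circ$--$4^\circ$ is also the right closing step for existence. One small caution: the decomposition into basic normal forms is not in general unique, so strictly speaking uniqueness of the splitting numbers also requires checking that different admissible decompositions of $M$ yield the same sum---but this follows automatically once existence is established, since the analytic definition is decomposition-free and satisfies all four axioms.
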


Moreover, by Lemma 9.1.6 on p.192 of \cite{Lon4} for $\om\in\C$ and $M\in\Sp(2n)$, we have
\begin{align}
  S_M^+(\om)=S_M^-(\overline\om).
\end{align}

The ultimate type of $\om\in\U$ for a symplectic matrix $M$ mentioned in the above lemma
is given in Definition 1.8.12 on pp.41-42 of \cite{Lon4} algebraically with its more properties studied
there.

For the reader's conveniences, following the List 9.1.12 on pp.198-199 of \cite{Lon4},
the splitting numbers (i.e., the ultimate types) for all basic normal forms are given by:

$\langle$1$\rangle$ $(S_M^+(1),S_M^-(1))=(1,1)$ for $M=N_1(1,b)$ with $b=1$ or $0$.

$\langle$2$\rangle$ $(S_M^+(1),S_M^-(1))=(0,0)$ for $M=N_1(1,-1)$.

$\langle$3$\rangle$ $(S_M^+(-1),S_M^-(-1))=(1,1)$ for $M=N_1(-1,b)$ with $b=-1$ or $0$.

$\langle$4$\rangle$ $(S_M^+(-1),S_M^-(-1))=(0,0)$ for $M=N_1(-1,1)$.

$\langle$5$\rangle$ $(S_M^+(e^{\sqrt{-1}\th}),S_M^-(e^{\sqrt{-1}\th}))=(0,1)$
for $M=R(\theta)$ with $\th\in(0,\pi)\cup(\pi,2\pi)$.

$\langle$6$\rangle$ $(S_M^+(\om),S_M^-(\om))=(1,1)$ for $M=N_2(\om,b)$
being non-trivial (cf. Definition 1.8.11 on p.41 of \cite{Lon4}) with
$\om=e^{\sqrt{-1}\th}\in\U\backslash\R$.

$\langle$7$\rangle$ $(S_M^+(\om),S_M^-(\om))=(0,0)$ for $M=N_2(\om,b)$
being trivial (cf. Definition 1.8.11 on p.41 of \cite{Lon4}) with
$\om=e^{\sqrt{-1}\th}\in\U\backslash\R$.

$\langle$8$\rangle$ $(S_M^+(\om),S_M^-(\om))=(0,0)$ for $\om\in\U$ and $M\in\Sp(2n)$
satisfying $\sigma(M)\cap\U=\emptyset$.

For any symplectic path $\ga \in \mathcal{P}_{2\pi}(2n)$ and $m \in \N$, the  $m$-th iteration $\ga_m:[0,\tau] \longrightarrow \Sp(2n)$ is given by
$\ga^m: [0,m\tau] \longrightarrow \Sp(2n),$
with $\ga^m(t) = \ga(t-j\tau)\ga(\tau)^j$ for $g\tau\le t \leq (j+1)\tau$ and $j\in\{0,1,\dots, m-1\}$.
The next Bott-type iteration formula is will be used in this paper.
\begin{lemma}[cf. Theorem 9.2.1 of \cite{Lon4}]\lb{lem:Bott}
For any $z\in \U$,
$i_z(\ga^m) = \sum_{\om^m=z}i_{\om}(\ga).$
\end{lemma}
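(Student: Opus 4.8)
The plan is to pass from Maslov-type indices to Morse indices via Lemma \ref{lem:2.1} and then to exploit the $2\pi$-periodicity of the iterated second-variation form through a discrete Fourier decomposition in the iteration index. Write $\tau=2\pi$. First I would record that any $\ga\in\mathcal{P}_{2\pi}(2n)$ is the fundamental solution of a linear Hamiltonian system $\dot\ga=JB(t)\ga$ with $B(t)=-J\dot\ga(t)\ga(t)^{-1}$ symmetric, and that such a system is realized as the second variation of a convex quadratic Lagrangian $L(t,x,p)$ with $L_{pp}>0$ (after the standard reduction to the convex case in \cite{Lon4}). Let $x$ be the associated $2\pi$-periodic extremal and $x_m$ its $m$-fold iterate on $[0,2\pi m]$. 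By Lemma \ref{lem:2.1} it then suffices to prove, for every $z\in\U$, that
\[
\phi_z(x_m)=\sum_{\om^m=z}\phi_\om(x), \qquad \nu_z(x_m)=\sum_{\om^m=z}\nu_\om(x).
\]

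The heart of the argument is a block decomposition of the $z$-second-variation form. Given $y\in D(z,2\pi m)$, write $y_j(t)=y(t+2\pi j)$ for $t\in[0,2\pi]$ and $j=0,\ldots,m-1$; continuity together with $y(2\pi m)=z\,y(0)$ becomes the cyclic-twisted matching $y_j(2\pi)=y_{j+1}(0)$ for $j<m-1$ and $y_{m-1}(2\pi)=z\,y_0(0)$. Since the coefficients $P,Q,R$ of $F''$ are $2\pi$-periodic, substituting $t=s+2\pi j$ on each subinterval shows that the bulk quadratic form splits as
\[
\langle F''(x_m)y,y\rangle=\sum_{j=0}^{m-1}\int_0^{2\pi}\Big(\langle P\dot y_j,\dot y_j\rangle+2\langle Q\dot y_j,y_j\rangle+\langle R y_j,y_j\rangle\Big)\,\d s,
\]
which is diagonal and $j$-independent; all coupling between the blocks resides solely in the matching conditions.

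Next I would diagonalize the cyclic coupling by the discrete Fourier transform twisted by the $m$-th roots of $z$. Fixing one root and enumerating $\mathcal{R}_m(z)=\{\om\in\U:\om^m=z\}$, the unitary change of variables $u_\om(s)=\tfrac{1}{\sqrt{m}}\sum_{j=0}^{m-1}\om^{-j}y_j(s)$ converts the twisted cyclic matching into the $m$ decoupled conditions $u_\om(2\pi)=\om\,u_\om(0)$, one for each $\om\in\mathcal{R}_m(z)$ (here the identity $\om^m=z$ is exactly what makes the boundary term close up). Because the bulk form is diagonal in $j$ and $j$-independent, Parseval carries $\langle F''(x_m)y,y\rangle$ into the orthogonal direct sum $\bigoplus_{\om^m=z}\langle F''(x)u_\om,u_\om\rangle$ over the spaces $D(\om,2\pi)$. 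Morse indices and nullities are additive over orthogonal direct sums, so $\phi_z(x_m)=\sum_{\om^m=z}\phi_\om(x)$ and likewise for the nullities; translating back through Lemma \ref{lem:2.1} yields $i_z(\ga^m)=\sum_{\om^m=z}i_\om(\ga)$ on the $\om$-nondegenerate locus, and the degenerate case follows from the additivity of the nullity together with lower semicontinuity of the index under perturbation of $z$.

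The step I expect to be the main obstacle is making the twisted Fourier transform rigorous at the level of the domains $\ol{D}(\om,2\pi)$ rather than merely on $W^{1,2}$: one must verify that the boundary pairing terms produced by integration by parts at the junctions $t=2\pi j$ cancel exactly under the transform, so that the claimed equivalence holds for the self-adjoint operator $F''$ and not just for the form. A secondary subtlety is the reduction of a general symplectic path to a genuinely convex Lagrangian system, where the matrix $B_{11}$ may fail to be positive definite; I would lean on the normal-form and functional setup of \cite{Lon4} to justify this, after which the Morse-index additivity argument applies verbatim.
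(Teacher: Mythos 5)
First, a point of reference: the paper does not prove this lemma at all --- it is quoted directly from Theorem 9.2.1 of \cite{Lon4} --- so your proposal must be judged on its own correctness. Your central mechanism is sound and is the classical Bott argument: splitting $y\in D(z,2\pi m)$ into the restrictions $y_j$, diagonalizing the twisted cyclic matching conditions by the discrete Fourier transform over the $m$-th roots $\omega$ of $z$, and adding Morse indices and nullities over the resulting orthogonal decomposition all check out (one verifies $u_\omega(2\pi)=\omega u_\omega(0)$ exactly because $\omega^m=z$, and the cross terms die by the orthogonality $\sum_{j=0}^{m-1}(\omega/\omega')^j=m\delta_{\omega\omega'}$). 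Two of your worries are unnecessary: index and nullity are invariants of the quadratic form on the $W^{1,2}$-domains, and the radical of the form coincides with the kernel of the self-adjoint realization, so no boundary-pairing verification at the junctions is needed; and the degenerate case requires no semicontinuity argument, since Lemma \ref{lem:2.1} holds for degenerate $\omega$ as well. Indeed the nullity identity is elementary linear algebra, as $\ker(M^m-zI)=\bigoplus_{\omega^m=z}\ker(M-\omega I)$ for $M=\gamma(2\pi)$ because $x^m-z$ is squarefree.

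The genuine gap is your very first step: the claim that any $\gamma\in\mathcal{P}_{2\pi}(2n)$ can be realized, ``after the standard reduction to the convex case in \cite{Lon4},'' as the linearization of a Legendre-convex Lagrangian system. No such reduction exists, and the obstruction is not technical but numerical: for a system with $L_{pp}>0$, Lemma \ref{lem:2.1} gives $i_\omega(\gamma)=\phi_\omega(x)\ge 0$ for every $\omega\in\U$, whereas a general symplectic path can have negative $\omega$-index (e.g.\ the clockwise rotation $\gamma(t)=e^{-tJ}$ in $\Sp(2)$). Nor can you trade $\gamma$ for a convex representative by a homotopy: $i_\omega$ is invariant only under homotopies rel endpoints, both sides of the Bott formula depend on the whole path and not merely on $\gamma(2\pi)$, and paths with some negative $\omega$-index lie in homotopy classes that convex systems never reach. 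This is precisely why Long's general proof does not pass through the Lagrangian Morse index: he argues either purely index-theoretically or via the indefinite Hamiltonian action functional with saddle-point (Galerkin) reductions, where the same $\mathbf{Z}_m$-Fourier splitting is applied to relative Morse indices. That said, your argument is complete for the only case this paper uses: the path $\xi_{\alpha,e}$ of \eqref{eqn:LinearHam1} has $B_{11}=I_2>0$, so Lemma \ref{lem:2.1} applies directly and your decomposition proves the formula there. To repair the general statement, either restrict the lemma to paths of Legendre-convex type, or replace the Lagrangian functional by the Hamiltonian action functional and run your Fourier decomposition on its finite-dimensional reductions.
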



\begin{thebibliography}{}

\bibitem{Euler} L. Euler, De motu rectilineo trium corporum se mutuo
attrahentium. {\it Novi Comm. Acad. Sci. Imp. Petrop.}  11. (1767) 144-151.

\bibitem{Ga} G. Gascheau,  Examen d'une classe d'\'{e}quations diff\'{e}rentielles
et application \`{a} un cas particulier du probl\`{e}me des trois corps. {\it Comptes
Rend. Acad. Sciences.} 16. (1843) 393-394.

\bibitem{HLS} X. Hu, Y. Long, S. Sun, Linear stability of elliptic Lagrangian
solutions of the classical planar three-body problem via index theory.
{\it Arch. Ration. Mech. Anal.} 213. (2014) 993-1045.

\bibitem{Hu2020}
X. Hu, Y. Long, and Y. Ou.
\newblock Linear stability of the elliptic relative equilibrium with (1+n)-gon
central configurations in planar n-body problem.
\newblock {\em Nonlinearity}, 33(3):1016--1045, {2020}.

\bibitem{HS} X. Hu, S. Sun, Morse index and stability of elliptic Lagrangian
solutions in the planar three-body problem. {\it Adv. Math.} 223. (2010) 98-119.


\bibitem{HuOuWang2015ARMA}
X. Hu, Y. Ou, and P. Wang.
\newblock Trace formula for linear {H}amiltonian systems with its applications
to elliptic {L}agrangian solutions.
\newblock {\em Arch. Ration. Mech. Anal.}, 216(1):313--357, 2015.

\bibitem{IM} R. Iturriaga, E. Maderna, Generic uniqueness of the minimal
Moulton central configuration. {\it Celest. Mech. Dyn. Astr.} 123 (2015), 351-361.


\bibitem{Lag} J. L. Lagrange,  Essai sur le probl\`{e}me des trois corps. Chapitre II.
{\OE}uvres Tome {6}, Gauthier-Villars, Paris. (1772) 272-292.

\bibitem{Leandro2003}
E. S.~G. Leandro.
\newblock Finiteness and bifurcations of some symmetrical classes of central
configurations.
\newblock {\em   {Arch. Ration. Mech. Anal.}}, 167(2):147--177,
{2003}.

\bibitem{Leandro2018}
E. S.~G. Leandro.
\newblock Structure and stability of the rhombus family of relative equilibria
under general homogeneous forces.
\newblock {\em  {J. Dynam. Differential Equations}}, 31(2):933--958,  {2018}.


\bibitem{Lio} J. Liouville, Sur un cas particulier du probl\`{e}me des trois corps.
{\it J. Math. Pures Appl.} 7. (1842) 110-113.

\bibitem{Liu2021}
B. Liu.
\newblock Linear instability of elliptic rhombus solutions to the planar four-body problem.
\newblock {\em Nonlinearity}, Nonlinearity 34 (11), 7728–7749, 2021.


\bibitem{LiZ} B. Liu, Q. Zhou, Linear stability of elliptic relative equilibria of restricted four-body problem.
{\it J. Diff. Equa.} 269. (2020) 4751-4798.

\bibitem{Lon4} Y. Long, Index Theory for Symplectic Paths with
	Applications. Progress in Math. 207, Birkh\"auser. Basel. 2002.

\bibitem{Lon} Y. Long, Lectures on Celestial Mechanics and Variational Methods.
{\it Preprint.} 2012.

\bibitem{Mansur2017}
A. Mansur, D. Offin, and M. Lewis.
\newblock Instability for a family of homographic periodic solutions in the
parallelogram four body problem.
\newblock {\em  {Qual. Theory Dyn. Syst.}}, 16(3):671--688,  {2017}.

\bibitem{MSS} R. Mart\'{\i}nez, A. Sam\`{a}, C. Sim\'{o},
Stability of homograpgic solutions of the planar three-body problem
with homogeneous potentials. in International conference on
Differential equations. Hasselt, 2003, eds, Dumortier, Broer, Mawhin,
Vanderbauwhede and Lunel, World Scientific, (2004) 1005-1010.

\bibitem{MSS1} R. Mart\'{\i}nez, A. Sam\`{a}, C. Sim\'{o},
Stability diagram for 4D linear periodic systems with applications
to homographic solutions. {\it J. Diff. Equa.} 226. (2006) 619-651.

\bibitem{MSS2} R. Mart\'{\i}nez, A. Sam\`{a}, C. Sim\'{o}, Analysis of
the stability of a family of singular-limit linear periodic systems in
$\R^4$. Applications. {\it J. Diff. Equa.} 226. (2006) 652-686.


\bibitem{MS} K. Meyer, D. Schmidt, Elliptic relative equilibria in
the N-body problem. {\it J. Diff. Equa.} 214. (2005) 256-298.

\bibitem{Mou} F. Moulton, The straight line solutions of the $n$-body problem.
{\it Ann. of Math}. II Ser. 12 (1910) 1-17.

\bibitem{R1} G. Roberts, Linear stability of the elliptic Lagrangian
triangle solutions in the three-body problem. {\it J. Diff. Equa.}
182. (2002) 191-218.

\bibitem{R2} E. Routh, On Laplace's three particles with
a supplement on the stability or their motion. {\it Proc. London Math.
Soc.} 6. (1875) 86-97.


\bibitem{zhou2019linear}
Q. Zhou.
\newblock Linear stability of elliptic relative equilibria of four-body problem
with two infinitesimal masses.
\newblock {\em arXiv preprint arXiv:1908.01345}, 2019.

\bibitem{Zhou2017}
Q. Zhou and Y. Long.
\newblock Maslov-type indices and linear stability of elliptic {E}uler solutions
of the three-body problem.
\newblock {\em  {Arch. Ration. Mech. Anal.}}, 226(3):1249--1301,
{2017}.

\bibitem{ZhonLong2017CMDA}
Q. Zhou and Y. Long.
\newblock The reduction of the linear stability of elliptic {E}uler-{M}oulton
solutions of the {$n$}-body problem to those of 3-body problems.
\newblock {\em {Celestial Mech. Dynam. Astronom.}}, 127(4):397--428, 2017.

\bibitem{ZhuLong} G. Zhu, Y. Long
Linear stability of some symplectic matrices.
{\it Front. Math. China.} 5(2010), no. 2, 361–368.

\end{thebibliography}
\end{document}